\newcommand{\THH}{\mathrm{THH}}
\newcommand{\HH}{\mathrm{HH}}
\newcommand{\HP}{\mathrm{HP}}
\newcommand{\TP}{\mathrm{TP}}
\newcommand{\CycSp}{\mathrm{CycSp}}
\newcommand{\HC}{\mathrm{HC}}
\newcommand{\md}{\mathrm{Mod}}
\newcommand{\Sp}{\mathrm{Sp}}
\newcommand{\fun}{\mathrm{Fun}}
\newcommand{\catst}{\mathrm{Cat}_\infty^{\mathrm{perf}}}
\newcommand{\perf}{\mathrm{Perf}}
\renewcommand{\sp}{\mathrm{Sp}}
\renewcommand{\mod}{\mathrm{Mod}}
\theoremstyle{definition}
\newtheorem{definition}{Definition}[section]
\newtheorem{construction}[definition]{Construction}
\newtheorem{example}[definition]{Example}
\newtheorem{proposition}[definition]{Proposition}
\newtheorem{lemma}[definition]{Lemma}
\newtheorem{corollary}[definition]{Corollary}
\newtheorem{remark}[definition]{Remark}
\newtheorem{theorem}[definition]{Theorem}
\begin{document}

\title{Kaledin's degeneration theorem and topological Hochschild homology}
\author{Akhil Mathew}
\date{\today}
\address{Department of Mathematics, University of Chicago, 5734 S. University Ave., Chicago, IL 60637-1514}
\email{amathew@math.uchicago.edu}

\maketitle 

\begin{abstract}
We give a short proof of Kaledin's theorem on  the
degeneration of the 
noncommutative
Hodge-to-de Rham spectral sequence. Our approach is based on topological
Hochschild homology and the theory of cyclotomic spectra. 
As a consequence, we also obtain relative versions of the
degeneration theorem, both in characteristic zero and for regular bases in
characteristic $p$. \end{abstract}

\section{Introduction}

Let $X$ be a smooth and proper variety over a field $k$. 
A basic invariant of $X$ arises from the \emph{algebraic de Rham cohomology},
$\mathrm{H}_{\mathrm{DR}}^*(X)$, given as the hypercohomology of the complex
$\Omega_X^{\ast}$ of sheaves of algebraic differential forms on $X$ with the
de Rham differential. 
Then $\mathrm{H}_{\mathrm{DR}}^*(X)$ is a finite-dimensional graded $k$-vector
space, and is the abutment of the classical \emph{Hodge-to-de Rham} spectral
sequence $\mathrm{H}^i(X, \Omega^j_X)  \implies
\mathrm{H}^{i+j}_{\mathrm{DR}}(X)$ arising from the naive filtration of the
complex of sheaves 
$\Omega_X^{\ast}$. It is a fundamental fact in algebraic geometry that this
spectral sequence degenerates when $k$ has characteristic zero. 
When $k = \mathbb{C}$ and $X$ is K\"ahler, the degeneration arises from Hodge theory. 

After 2-periodization and in characteristic zero, the
above invariants and questions have \emph{noncommutative} analogs, i.e., they
are defined more generally for differential graded (dg) categories rather than only for varieties. 
Let $\mathcal{C}$ be a smooth and proper dg category over a field $k$ 
(e.g., $\mathcal{C}$ could be the derived category $D^b
\mathrm{Coh}(X)$ of a smooth and proper variety $X/k$). 
In this case, a basic invariant of $\mathcal{C}$ is given by the
\emph{Hochschild homology} $\HH(\mathcal{C}/k)$, regarded as a noncommutative version of
differential forms for $\mathcal{C} $ thanks to the classical
Hochschild-Kostant-Rosenberg theorem.

Hochschild homology takes values in the derived category $D(k)$ of $k$-vector spaces;
it produces a perfect complex equipped with an action of the circle $S^1$,
the noncommutative version of the de Rham diferential. 
As a result, one can take the $S^1$-Tate construction to form $\HP(\mathcal{C}/k)
\stackrel{\mathrm{def}}{=} \HH(\mathcal{C}/k)^{tS^1}$, called the \emph{periodic
cyclic homology}
of $\mathcal{C}$ and often regarded as a noncommutative version of de Rham cohomology. 
One has a general spectral sequence, arising from the Postnikov filtration of
$\HH(\mathcal{C}/k)$, 
$\HH_*(\mathcal{C}/k)[u^{\pm 1}] \implies \HP_*(\mathcal{C}/k)$, called the
(noncommutative)
\emph{Hodge-to-de Rham spectral sequence.}
When $\mathcal{C} = D^b \mathrm{Coh}(X)$ for $X$ in characteristic zero, this
reproduces a 2-periodic analog of the Hodge-to-de Rham spectral sequence.

The papers \cite{K1, K2} of Kaledin describe a proof of 
the following result, conjectured by Kontsevich and Soibelman
\cite[Conjecture 9.1.2]{KS}.

\begin{theorem}[Kaledin] 
\label{char0thm}
Let $\mathcal{C}$ be a smooth and proper dg category over a field $k$ of characteristic zero. 
Then the Hodge-to-de Rham spectral sequence $E_2  =\HH_*(\mathcal{C}/k)[u^{\pm 1}]
\implies \HP_*(\mathcal{C}/k)$ degenerates at $E_2$.
\end{theorem}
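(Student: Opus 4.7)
The plan is to reduce to positive characteristic by a spreading-out argument, then to exploit the cyclotomic structure on $\THH$ to establish degeneration in characteristic $p$; this strategy recovers Kaledin's theorem while replacing his cohomological computations by structural input from Nikolaus--Scholze cyclotomic spectra.

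For the reduction, by finite presentation one can choose a finitely generated $\mathbb{Z}$-subalgebra $R \subset k$ and a smooth and proper $R$-linear dg category $\mathcal{C}_R$ with $\mathcal{C}_R \otimes_R k \simeq \mathcal{C}$. Smoothness and properness over $R$ make $\HH(\mathcal{C}_R/R)$ a perfect $R$-complex with $S^1$-action; after localizing $R$, we may represent it by a bounded complex of finite free $R$-modules, so the Hodge-to-de Rham spectral sequence over $R$ becomes a spectral sequence of finitely generated $R$-modules with $R$-linear differentials. If some $d_r$ is nonzero at the generic point $\spec k \to \spec R$, then it is nonzero on a dense open $U \subset \spec R$. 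Since $R$ is finitely generated over $\mathbb{Z}$, for all sufficiently large primes $p$ the closed points of $\spec R$ with residue characteristic $p$ are dense, so $U$ contains a closed point $x$ with $\kappa(x)$ a finite field. The nonvanishing of $d_r \otimes \kappa(x)$ contradicts degeneration of the Hodge-to-de Rham spectral sequence over $\kappa(x)$, so it suffices to prove the following: for any smooth and proper dg category $\mathcal{C}'$ over a perfect field $k'$ of characteristic $p > 0$, the spectral sequence $\HH_*(\mathcal{C}'/k')[u^{\pm 1}] \implies \HP_*(\mathcal{C}'/k')$ degenerates at $E_2$.

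For the positive characteristic step, the key idea is that $\HH(\mathcal{C}'/k')$ lifts to $\THH(\mathcal{C}')$, a bounded cyclotomic spectrum carrying a Frobenius map $\varphi_p \colon \THH(\mathcal{C}') \to \THH(\mathcal{C}')^{tC_p}$ in the sense of Nikolaus--Scholze. Applying $(-)^{hS^1}$ relates $\TP(\mathcal{C}')$ to $\HH(\mathcal{C}'/k')^{tS^1} = \HP(\mathcal{C}'/k')$, and the Tate vanishing for bounded $C_p$-equivariant spectra together with the Frobenius forces $\HP(\mathcal{C}'/k')$ to have the same total $k'$-rank as $\HH_*(\mathcal{C}'/k')$; this numerical equality is equivalent to degeneration at $E_2$. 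The main obstacle is this last step: reinterpreting the noncommutative Cartier-style isomorphism through the cyclotomic Frobenius and extracting from it the numerical identity. Granting this structural input in characteristic $p$, the spreading-out argument of the previous paragraph completes the proof.
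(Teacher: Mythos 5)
Your reduction to characteristic $p$ follows the same general strategy as the paper (spread out over a finitely generated $R \subset k$, specialize to a closed point where the first nonzero differential survives), but the statement you reduce to is false, and this is a genuine gap rather than a technical loose end. Hodge-to-de Rham degeneration does \emph{not} hold for an arbitrary smooth proper dg category over a perfect field of characteristic $p$: the correct characteristic $p$ statement (Theorem~\ref{kaledinp2}) requires two additional hypotheses, namely (i) a lift of the category to $W_2(k')$ and (ii) the amplitude bound $\HH_i = 0$ for $i \notin [-p,p]$. Both are essential to the mechanism of the proof: degeneration is equivalent to $\sigma$-torsion-freeness of $\THH(\mathcal{C}')$ over $\THH(k') $ (Proposition~\ref{torsionfreedeg}), and torsion-freeness is extracted from the vanishing of $\THH(W_2(k')) \to \THH(k')$ in degrees $0 < i \le 2p-2$ together with a duality argument that needs the amplitude window (Propositions~\ref{THHw2}, \ref{freecrit}, \ref{dualextend}). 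Your sketch of the characteristic $p$ step --- that ``Tate vanishing for bounded $C_p$-equivariant spectra together with the Frobenius forces'' the rank equality --- conflates two different things: the cyclotomic Frobenius does give $\THH(\mathcal{C}')[1/\sigma] \simeq \THH(\mathcal{C}')^{tC_p} \simeq \HP(\mathcal{C}'/k')$ unconditionally for smooth proper $\mathcal{C}'$ (Proposition~\ref{noncommCart}), but this only says $\HP$ is obtained by inverting $\sigma$ while $\HH$ is obtained by killing $\sigma$; if $\THH(\mathcal{C}')$ has $\sigma$-torsion the ranks genuinely differ and the spectral sequence genuinely does not degenerate.

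The fix is to carry the two hypotheses through your spreading-out argument, which is exactly what the paper's Theorem~\ref{char0degcrit} does. After enlarging $R$ you may assume $R$ is smooth over $\mathbb{Z}$ (so that for any maximal ideal $\mathfrak{m}$ with residue field of characteristic $p$, the map $R \to R/\mathfrak{m}$ lifts to $W_2$, and hence $\THH(\mathcal{C}_R) \otimes_{\THH(R)} \THH(\kappa(x))$ lifts to a perfect $\THH(W_2(\kappa(x)))$-module), and you may assume the homology of $\HH(\mathcal{C}_R/R)$ is finite free and concentrated in $[-p,p]$ for every noninvertible prime $p$ of $R$ (automatic for $p$ large since the amplitude is finite). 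Only with both inputs in hand does Proposition~\ref{generaldegcrit} apply at the closed point to produce the contradiction. As written, your argument would ``prove'' degeneration for every smooth proper dg category in characteristic $p$, which is known to fail.
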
 

An equivalent statement is that the $S^1$-action on $\HH(\mathcal{C}/k)$,
considered as an object of the derived category $D(k)$, is trivial; thus we may
regard the result as a type of \emph{formality} statement. 
Using the comparison between 2-periodic de Rham cohomology and periodic cyclic
homology in characteristic zero, one recovers the classical result that the
(commutative) Hodge-to-de Rham spectral
sequence $\mathrm{H}^i(X, \Omega^j_X) \implies \mathrm{H}^{i+j}_{\mathrm{dR}}(X)$ from Hodge cohomology
to de Rham cohomology degenerates for a smooth and proper variety $X$ in
characteristic zero. 

Kaledin's proof of Theorem~\ref{char0thm} is based on reduction mod $p$. Motivated by the approach of
Deligne-Illusie \cite{DI} in the commutative case, Kaledin proves a formality statement
for Hochschild homology in characteristic $p$ of smooth and proper dg categories 
which satisfy an amplitude bound on Hochschild cohomology and which admit a lifting mod $p^2$. 
Compare \cite[Th. 5.1]{K2} and \cite[Th. 5.5]{K2}. 

In this paper, we will give a short proof of  the following
 slight variant of Kaledin's characteristic $p$ degeneration results. 
Analogous arguments as in \cite{K1, K2} show that this
variant also implies Theorem~\ref{char0thm}.
 \begin{theorem} 
\label{kaledinp2}
\label{mainthm}
Let $\mathcal{C}$ be a smooth and proper dg category over a perfect field $k$ of characteristic $p>
0$. Suppose that: 
\begin{enumerate}
\item  
$\mathcal{C}$ has a lift to a smooth proper dg category over $W_2(k)$. 
\item
$\HH_i(\mathcal{C}/k)$ vanishes for $i \notin [-p,p]$.
\end{enumerate}
 Then the Hodge-to-de Rham
spectral sequence $\HH_*(\mathcal{C}/k)[u^{\pm 1}] \implies \HP_*(\mathcal{C}/k)$
degenerates at $E_2$.
\end{theorem}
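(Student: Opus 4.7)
My plan is to realize $\HP(\mathcal{C}/k)$ as a mod-$p$ reduction of the topological periodic cyclic invariant $\TP(\mathcal{C}) = \THH(\mathcal{C})^{tS^1}$, to recast degeneration as a $p$-torsion-freeness statement for $\TP(\mathcal{C})$ regarded as a perfect module over $\TP(k)$, and then to harvest the required torsion-freeness from the $W_2(k)$-lift, with the amplitude bound serving to propagate a single mod-$p^2$ trivialization over the whole (finite) range of nonvanishing homotopy.

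First I would set up the relevant base changes. Smoothness and properness of $\mathcal{C}/k$ imply that $\THH(\mathcal{C})$ is perfect as a $\THH(k)$-module, so $\TP(\mathcal{C})$ is perfect over $\TP(k)$. A Bökstedt-style computation identifies $\pi_* \TP(k) \cong W(k)[u^{\pm 1}]$ with $|u|=2$ and yields $\TP(k)/p \simeq \HP(k)$. Perfect base change along $\TP(k) \to \HP(k)$ then gives the key identification $\HP(\mathcal{C}/k) \simeq \TP(\mathcal{C})/p$.

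Next, I would translate the theorem into a structural statement about $\TP(\mathcal{C})$. Degeneration of the Hodge-to-de Rham spectral sequence is, after periodization, equivalent to the equality
\[ \dim_{k((u))} \pi_* \HP(\mathcal{C}/k) \;=\; \dim_k \pi_* \HH(\mathcal{C}/k). \]
Since $\pi_* \TP(k) = W(k)[u^{\pm 1}]$ behaves as a $2$-periodic regular local ring of dimension one, any perfect $\TP(k)$-module splits as a direct sum of free and cyclic $p$-power-torsion parts, and torsion parts strictly inflate the mod-$p$ dimension relative to the $W(k)((u))$-rank. A separate bookkeeping argument, using $\THH(k)$-perfectness and the Postnikov filtration on $\THH(\mathcal{C})$, identifies this rank with $\dim_k \pi_* \HH(\mathcal{C}/k)$. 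Thus the theorem reduces to showing that $\TP(\mathcal{C})$ is $p$-torsion-free.

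Finally, I would deploy the lift. A smooth proper lift $\tilde{\mathcal{C}}$ of $\mathcal{C}$ to $W_2(k)$ produces $\TP(\tilde{\mathcal{C}})$ over $\TP(W_2(k))$, which after suitable base change along $W_2(k) \to k$ provides a mod-$p^2$ lift of $\TP(\mathcal{C})$ over $\TP(k)/p^2$, and thereby trivializes the first Bockstein $\beta : \TP(\mathcal{C})/p \to \Sigma\, \TP(\mathcal{C})/p$ that detects $p$-torsion. The amplitude bound $\HH_i(\mathcal{C}/k) = 0$ for $|i| > p$ is what is needed to rule out higher Bocksteins: any potential $p$-power torsion class is supported in a range of width $\leq 2p$, which by the cyclotomic/Nygaard structure on $\TP$ is precisely the regime in which a single mod-$p^2$ trivialization forces all such classes to vanish.

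The main obstacle is the last step: namely, making rigorous the passage from the $W_2(k)$-lift of the dg category $\mathcal{C}$ to an honest mod-$p^2$ lift of the $\TP(k)$-module $\TP(\mathcal{C})$, and then using the amplitude bound together with the cyclotomic Frobenius to exclude all higher-order torsion. This is the arithmetic core of the argument, the analog in the present framework of the mod-$p^2$ lifting trick of Deligne--Illusie \cite{DI}.
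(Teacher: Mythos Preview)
Your overall strategy --- reduce degeneration to a torsion-freeness statement for a perfect module, then kill the torsion using the $W_2(k)$-lift and the amplitude bound --- is the right shape, but the implementation via $\TP$ has two genuine gaps, and the paper in fact runs the argument through $\THH$ rather than $\TP$.

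\textbf{The rank identification is circular.} You assert that the $W(k)((u))$-rank of $\TP(\mathcal{C})$ equals $\dim_k \pi_*\HH(\mathcal{C}/k)$, via ``a separate bookkeeping argument using the Postnikov filtration on $\THH(\mathcal{C})$.'' This is not true in general, and unwinding it one finds it is equivalent to what you are trying to prove. From $\TP(\mathcal{C})/p\simeq\HP(\mathcal{C}/k)$ one gets $\dim_k\HP=\operatorname{rank}\TP+2\cdot(\text{$p$-torsion})$; from $\HH(\mathcal{C}/k)=\THH(\mathcal{C})/\sigma$ one gets $\dim_k\HH=\operatorname{rank}_{k[\sigma]}\THH+2\cdot(\text{$\sigma$-torsion})$; and the cyclotomic Frobenius identifies $\operatorname{rank}_{k[\sigma]}\THH$ with $\dim_k\HP$. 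Combining, $\dim_k\HH=\operatorname{rank}\TP$ holds exactly when both torsion contributions already vanish. The paper avoids this by working directly with $\THH(\mathcal{C})$ as a perfect $\THH(k)$-module and using the cyclotomic Frobenius equivalence $\THH(\mathcal{C})[1/\sigma]\simeq\HP(\mathcal{C}/k)$ to reduce degeneration to $\sigma$-torsion-freeness of $\THH(\mathcal{C})$; no appeal to a rank of $\TP$ is needed.

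\textbf{The mod-$p^2$ lift of $\TP$ is not available.} You want the $W_2(k)$-lift $\widetilde{\mathcal{C}}$ to produce a lift of $\TP(\mathcal{C})$ over $\TP(k)/p^2$. But $\TP$ is an $S^1$-Tate construction and does not base-change along $\THH(W_2(k))\to\THH(k)$; there is no reason for $\TP(\widetilde{\mathcal{C}})\otimes_{\TP(W_2(k))}\TP(k)$ to recover $\TP(\mathcal{C})$, and $\TP(W_2(k))$ itself is not $\TP(k)/p^2$ (indeed $W_2(k)$ is not perfect, and its $\TP$ is complicated). By contrast, $\THH$ does base-change: $\THH(\mathcal{C})\simeq\THH(\widetilde{\mathcal{C}})\otimes_{\THH(W_2(k))}\THH(k)$. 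The paper exploits exactly this, together with the explicit low-degree calculation $\pi_*\tau_{\leq 2p-2}\THH(W_2(k))\simeq W_2(k)[u]/u^p$ and the factorization of $\THH(W_2(k))\to\tau_{\leq 2p-2}\THH(k)$ through $k$, to show that $\sigma$ acts injectively on $\pi_*\THH(\mathcal{C})$ in a window of length $2p-2$. A duality trick (applying the same bound to the $\THH(k)$-linear dual) then rules out the remaining torsion. This replaces your vague ``higher Bockstein'' step with a concrete, elementary module-theoretic argument.
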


We will deduce Theorem~\ref{kaledinp2}
 from the framework of \emph{topological} Hochschild homology and in particular the
theory of cyclotomic spectra as recently reformulated by Nikolaus-Scholze
\cite{nikolaus-scholze}. 
We give an overview of this apparatus in Section 2. 
The idea of using cyclotomic spectra here is, of course, far from new, 
and is already indicated 
in the papers of Kaledin. 

Given $\mathcal{C}$, one considers the topological
Hochschild homology $\THH(\mathcal{C})$ as a module over
the
$\mathbf{E}_\infty$-ring $\THH(k)$, whose homotopy groups are given by
$k[\sigma]$ for $|\sigma| = 2$. One has equivalences of spectra: 
\begin{enumerate}
\item $\THH(\mathcal{C})/\sigma \simeq \HH(\mathcal{C}/k)$. 
\item  
$\THH(\mathcal{C})[1/\sigma]^{(1)} \simeq
\HP(\mathcal{C}/k)$ for smooth and proper $\mathcal{C}/k$.
Here the superscript ${}^{(1)}$ denotes the Frobenius twist. 
\end{enumerate}
The first equivalence is elementary, while the second arises from the cyclotomic
Frobenius and should compare to the ``noncommutative Cartier isomorphisms''
studied by Kaledin. 
These observations imply that the difference between 2-periodic Hochschild homology 
and periodic cyclic homology  (i.e., differentials in the spectral sequence) is
controlled precisely by the presence of $\sigma$-torsion in
$\THH_*(\mathcal{C})$. 
Under the above assumptions of liftability and amplitude bounds, the degeneration
statement then 
follows from an elementary argument directly on the level of $\THH$. 
We formulate this as a general formality statement in
Proposition~\ref{generaldegcrit} below. 

We also apply our methods to prove freeness and degeneration assertions in
Hochschild homology for families of smooth and proper dg categories.
We first review the commutative version. 
If $S$ is a scheme of finite type over a field of characteristic zero and $f\colon X \to
S$ a proper smooth map, then one
knows by a classical theorem of Deligne \cite{Delignedeg} that the relative
Hodge cohomology sheaves $R^i f_* \Omega_{X/S}^j$ form vector
bundles on $S$, and
that the relative Hodge-to-de Rham spectral sequence degenerates when $S$ is
affine.  
When $S$ is smooth, this can be deduced by reduction mod $p$ and a relative
version of the Deligne-Illusie constructions as in \cite{Illusie}. 

There are
noncommutative versions of these relative results, too. 
For example, in characteristic zero, one has the following result. 
\begin{theorem} 
\label{relativedeg:intro}
Let $A$ be a commutative $\mathbb{Q}$-algebra and let $\mathcal{C}$ be a smooth
proper dg category over $A$. Then: 
\begin{enumerate}
\item The Hochschild homology groups $\HH_i(\mathcal{C}/A)$ are finitely
generated projective $A$-modules.  
\item The relative Hodge-to-de Rham spectral sequence degenerates. 
\end{enumerate}
\end{theorem}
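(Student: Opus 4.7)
The plan is to deduce Theorem~\ref{relativedeg:intro} from Theorem~\ref{char0thm} by applying it fiberwise on $\mathrm{Spec}(A)$ and then globalizing. First, by Noetherian approximation, a smooth proper dg category $\mathcal{C}/A$ descends to such a category over some finitely generated $\mathbb{Q}$-subalgebra $A_0 \subset A$; since $\HH$ commutes with filtered colimits in the base, it suffices to treat the case where $A$ is a finitely generated (hence Noetherian) $\mathbb{Q}$-algebra. Smoothness and properness of $\mathcal{C}/A$ then imply that $\HH(\mathcal{C}/A)$ is a perfect $A$-complex equipped with an $S^1$-action, and $\HH$ satisfies base change:
\[
\HH(\mathcal{C}_B/B) \simeq \HH(\mathcal{C}/A) \otimes_A^L B
\]
as $S^1$-equivariant objects for any $A \to B$. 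In particular, at each residue field $k(\mathfrak{p})$, Theorem~\ref{char0thm} applies to $\mathcal{C}_{k(\mathfrak{p})}/k(\mathfrak{p})$, giving that the $S^1$-action on the fiber $\HH(\mathcal{C}_{k(\mathfrak{p})}/k(\mathfrak{p}))$ is trivial; equivalently the fiber is formal, equivalent to $\bigoplus_i \HH_i(\mathcal{C}_{k(\mathfrak{p})}/k(\mathfrak{p}))[i]$ as an $S^1$-equivariant object, and the fiberwise HdR spectral sequence degenerates at $E_2$.

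Both parts of the theorem will follow once we have global triviality of the $S^1$-action on $\HH(\mathcal{C}/A)$: the resulting splitting $\HH(\mathcal{C}/A) \simeq \bigoplus_i \HH_i(\mathcal{C}/A)[i]$ exhibits each $\HH_i(\mathcal{C}/A)$ as a summand of a perfect complex concentrated in a single degree, hence as a finitely generated projective $A$-module, and the relative HdR spectral sequence is then automatically degenerate. To obtain global triviality, I would examine the Connes $B$-operator $B \colon \HH(\mathcal{C}/A) \to \HH(\mathcal{C}/A)[1]$ encoding the mixed-complex structure in characteristic zero. Its class in $\mathrm{Ext}^1_A(\HH(\mathcal{C}/A), \HH(\mathcal{C}/A))$ --- a finitely generated $A$-module by perfectness --- vanishes modulo every residue field by the pointwise input, and one aims to deduce global vanishing via a Nakayama / generic-flatness argument on $\mathrm{Spec}(A)$.

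The main obstacle is precisely this globalization step. In general, an element of a finitely generated $A$-module that vanishes at every residue field need not be zero (especially in the presence of nilpotents in $A$, or torsion in the Ext-module), so the pointwise input alone is insufficient and additional structure must be used. The most natural extra input in characteristic zero is the rational formality of $BS^1$, under which $S^1$-equivariant $A$-complexes correspond to mixed complexes over $A$ and the obstruction to splitting is placed in a rigid setting. A complementary route is to establish a perfectness and base-change property of $\HP(\mathcal{C}/A)$ directly, which would force the fiberwise Hodge numbers to be locally constant on $\mathrm{Spec}(A)$ and make the globalization automatic by upper semicontinuity. Either approach reduces the matter to a careful commutative-algebraic argument, which is the technical heart of the proof.
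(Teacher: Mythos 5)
Your overall strategy (fiberwise degeneration via Theorem~\ref{char0thm}, then globalization) matches the paper's, and you correctly locate the difficulty: pointwise triviality at residue fields does not globalize by Nakayama alone, especially over a non-reduced base. But the proposal stops exactly there --- you name two candidate fixes without executing either, and the one you would need is not quite either of the two you sketch. The missing idea is the \emph{nilinvariance of periodic cyclic homology} in characteristic zero (Goodwillie's theorem): for a local Artinian $\mathbb{Q}$-algebra $A$ with residue field $k$, the map $\HP(\mathcal{C}/k) \to \HP(\mathcal{C}\otimes_A k/k)$ is an equivalence. The paper reduces to the local Artinian case (by Noetherian approximation, localization, and completion, with an inverse limit over the truncated quotients $A/(x_1^r,\dots,x_n^r)$ to handle the general Noetherian local ring), and there combines three things: (i) a K\"unneth/perfectness statement for dualizable objects of $\md_{\HH(A/k)}(\Sp^{BS^1})$ (Proposition~\ref{dualizableperfect}), which forces $M^{tS^1}$ to be a \emph{free} $A^{tS^1}$-module base-changing to $(M\otimes_A k)^{tS^1}$; (ii) degeneration over the residue field $k$, obtained from Theorem~\ref{char0degcrit} (note this requires the cyclotomic lifting datum over a finitely generated subring, not just Theorem~\ref{char0thm} as a black box --- though for the dg-category statement this datum is supplied by $\THH$); and (iii) an elementary counting lemma: $\dim_k\pi_*(M) \le \dim_k\pi_*(A)\cdot\dim_k\pi_*(M\otimes_A k)$ for perfect $A$-modules, with equality iff $M$ is free. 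The Tate spectral sequence gives the reverse inequality once (i) and (ii) are in place, so equality holds, $M$ is free, and the spectral sequence degenerates.

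Your route (b) --- ``perfectness and base-change for $\HP$ forcing locally constant Hodge numbers, then upper semicontinuity'' --- is in the right spirit but incomplete as stated: semicontinuity arguments see only the reduced fibers, whereas the actual content is that $\HP$ of an Artinian thickening already agrees with $\HP$ of the closed fiber (nilinvariance), which is what rules out torsion and nilpotent phenomena. Your route (a) via the Connes operator and $\mathrm{Ext}^1$ faces exactly the obstruction you identify and is not the path the paper takes. So the proposal is a correct outline with the decisive lemma missing; as written it does not constitute a proof.
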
 

This result can be deduced from Kaledin's theorem. 
When $A$ is smooth at least, the freeness of $\HH_i (\mathcal{C}/A)$  follows
from the existence of a flat connection on periodic cyclic
homology, due to Getzler \cite{getzler}, together with Theorem~\ref{char0thm}. Compare also \cite[Remark 9.1.4]{KS}
for a statement. 
We will give a short proof inspired by this idea, in the form of the nilinvariance
of periodic cyclic homology in characteristic zero and a K\"unneth theorem. 

In fact, we
will formulate the argument as a general formality (and local freeness) criterion
 for $S^1$-actions via cyclotomic spectra. 
This includes the argument for Kaledin's theorem as well as additional
input for the relative case.

\newtheorem*{gc}{Formality criterion}

\begin{gc}
Let $A$ be a commutative $\mathbb{Q}$-algebra and let $M \in \perf(A)^{BS^1}$
be a perfect complex of $A$-modules equipped with an $S^1$-action. Suppose that there
exists a finitely generated $\mathbb{Z}$-algebra $R \subset A$, a dualizable object
$M'$ in the $\infty$-category $\md_{\THH(R)}( \CycSp)$ of $\THH(R)$-modules in
cyclotomic spectra, and an equivalence $M \simeq M' \otimes_{\THH(R)} A$ in
$\perf(A)^{BS^1}$. 
Then the homology groups of $M$ are finitely generated projective $A$-modules
and the $S^1$-action on $M$ is trivial. 
\end{gc}

In characteristic $p$, we can approach relative questions as well using the
cyclotomic Frobenius, although  our methods only apply when the base is smooth. 
 Recent work of Petrov-Vaintrob-Vologodsky
\cite{PVV} has
obtained related statements using the methods of Kaledin and 
the Gauss-Manin connection in periodic cyclic homology. 
In particular, within the range $[-(p-3), (p-3)]$, they obtain a
Fontaine-Laffaille structure on periodic cyclic
homology, which they observe implies projectivity and degeneration.  
\begin{theorem}[{Cf. also \cite[Theorem 1]{PVV}}]
\label{relcharp}
Let $A$ be a regular noetherian $\mathbb{F}_p$-algebra such that the
Frobenius map $A \to A$ is finite. 
Let $\widetilde{A}$ be a flat lift of $A$ to $\mathbb{Z}/p^2$. Let $\mathcal{C}$ be a smooth
and proper dg category over $A$. Suppose that: 
\begin{enumerate}
\item  
$\mathcal{C}$ lifts to a smooth
and proper dg category over $\widetilde{A}$. 
\item
$\HH_i(\mathcal{C}/A)  = 0$  for $i \notin [-(p-1),p-1]$. 
\end{enumerate}
Then the Hochschild homology groups $\HH_i(\mathcal{C}/A)$ are finitely
generated projective $A$-modules and the relative Hodge-to-de Rham spectral
sequence $\HH_*(\mathcal{C}/A)[u^{\pm 1}] \implies \HP_*(\mathcal{C}/A)$
degenerates at $E_2$. 
\end{theorem}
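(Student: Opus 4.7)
The plan is to extend the argument sketched for Theorem~\ref{mainthm} to the relative base $A$, working with $\THH(\mathcal{C})$ as a cyclotomic spectrum over $\THH(A)$ and using the lift $\widetilde{A}$ together with the cyclotomic Frobenius to control $\sigma$-torsion.

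First I would establish relative analogues of the two basic comparisons from the introduction: $\THH(\mathcal{C})/\sigma \simeq \HH(\mathcal{C}/A)$ and $\THH(\mathcal{C})[1/\sigma]^{(1)} \simeq \HP(\mathcal{C}/A)$, where $(1)$ denotes the Frobenius twist relative to $A$. The first is formal. The second requires the cyclotomic Frobenius together with an identification of the relevant $C_p$-Tate construction of $\THH(A)$ in terms of the Frobenius on $A$; the hypotheses that $A$ is regular and that Frobenius on $A$ is finite should be precisely what make this identification behave well, ensuring in particular that $A^{(1)}$ is a finitely generated projective $A$-module and that the comparison map is an equivalence. Together these two comparisons reduce the projectivity and degeneration claims to showing that the $\sigma$-torsion in $\pi_* \THH(\mathcal{C})$ vanishes in a suitable range.

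Second, I would use the lift of $\mathcal{C}$ to $\widetilde{A}$ together with the amplitude bound $\HH_i(\mathcal{C}/A) = 0$ for $|i| > p-1$ to carry out the $\THH$-level formality argument underlying Theorem~\ref{mainthm}. The lift makes $\THH(\mathcal{C})$ a base change of $\THH$ of the lift over $\THH(\widetilde{A})$, providing the extra Bockstein-type structure needed to pin down the torsion, and the amplitude bound forces the relevant $\sigma$-torsion classes to vanish by a degree-counting argument. Combined with the Frobenius comparison above, this yields both the freeness of $\HH_i(\mathcal{C}/A)$ (by identifying it with a shift of the Frobenius twist of the manifestly perfect $\HP(\mathcal{C}/A)$, which is a projective $A$-module because pullback along the finite Frobenius preserves perfection) and the degeneration of the relative Hodge-to-de Rham spectral sequence.

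The main obstacle, compared to Theorem~\ref{mainthm}, is the analysis of $\THH(A)$ and its $C_p$-Tate construction when $A$ is a general regular $\mathbb{F}_p$-algebra with finite Frobenius rather than a perfect field. Bokstedt's calculation $\pi_* \THH(k) = k[\sigma]$ is replaced by a more involved description, and one must verify that inverting $\sigma$ and taking the Frobenius twist really computes $\HP(\mathcal{C}/A)$ in this generality, as well as that the $\sigma$-torsion vanishing can be tested in the expected degrees. The finite Frobenius hypothesis, together with regularity of $A$, should be precisely what makes this go through, and explains why the theorem needs both conditions.
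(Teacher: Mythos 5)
Your outline correctly identifies the two relative comparisons ($\THH(\mathcal{C})/\sigma \simeq \HH(\mathcal{C}/A)$ and the Frobenius-twisted identification of $\THH(\mathcal{C})[1/\sigma]$ with $\HP(\mathcal{C}/A)$) and correctly locates the difficulty in the failure of B\"okstedt's calculation over a non-perfect base, but the proposal has a genuine gap at the point where you claim the conclusion follows. You write that freeness of $\HH_i(\mathcal{C}/A)$ follows ``by identifying it with a shift of the Frobenius twist of the manifestly perfect $\HP(\mathcal{C}/A)$, which is a projective $A$-module.'' This is circular: the homotopy groups $\HP_i(\mathcal{C}/A)$ are a priori only subquotients of $\HH_*(\mathcal{C}/A)$ via the Tate spectral sequence, and their projectivity is essentially equivalent to the degeneration plus projectivity statement you are trying to prove. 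Even after one knows that $\THH(\mathcal{C})$ (or rather its base change to a perfection) is $\sigma$-torsion-free, one only obtains an isomorphism of the form $\pi_*(M)[\sigma^{\pm 1}]^{(1)} \simeq \pi_*(M^{tS^1})$ between finitely generated (coherent) modules; nothing yet forces these to be projective. The paper's proof supplies the missing idea: it inducts on the Krull dimension of $A$ so that the differentials, and hence the torsion, are supported at the closed point; it passes to the perfection $A_{\mathrm{perf}}$ (where the B\"okstedt-type calculation $\THH_*(A_{\mathrm{perf}}) \simeq A_{\mathrm{perf}}[\sigma]$ and the $W_2$-lifting argument of Proposition~\ref{freenotperf} apply, and where perfectness of the relevant modules is checked via relative $\THH$ over $S^0[q_1,\dots,q_n]$); and it then runs the Frobenius-descent argument: if $I$ is the (finitely generated, by the coherence of $A_{\mathrm{perf}}$) annihilator of the $\mathfrak{m}$-power torsion in $\pi_{\mathrm{even}}(M_{\mathrm{perf}})$, the semilinear isomorphism forces $I \subset I^{[p]}$, hence $I = 0$. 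This is the analogue of the statement that a coherent module isomorphic to its own Frobenius twist is locally free, and it is the step your proposal does not supply.

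A secondary, related gap: your second paragraph asserts that the lift to $\widetilde{A}$ plus the amplitude bound ``forces the relevant $\sigma$-torsion classes to vanish by a degree-counting argument,'' but the degree-counting argument of Propositions~\ref{perfectmod} and~\ref{dualextend} depends on the structure theory of perfect $\THH(k)$-modules over $\THH_*(k) = k[\sigma]$, which is unavailable for $\THH(A)$ when $A$ is not perfect (its homotopy ring is not a polynomial ring on $\sigma$, and perfect modules do not decompose into cyclic pieces). You flag this obstacle in your final paragraph but offer no mechanism to overcome it; the paper's mechanism is precisely the faithfully flat base change $A \to A_{\mathrm{perf}}$ together with the extension of B\"okstedt's theorem to perfect $\mathbb{F}_p$-algebras. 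Without both of these ingredients --- the passage to the perfection and the $I \subset I^{[p]}$ argument --- the proof does not close.
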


\subsection*{Acknowledgments}

I would like to thank Mohammed Abouzaid, Benjamin Antieau, Bhargav Bhatt, Lars
Hesselholt, Matthew Morrow, Thomas Nikolaus, Alexander Petrov, Nick Rozenblyum,  Peter Scholze, 
and Dmitry Vaintrob
for helpful discussions related to this subject. I would also like to thank
Benjamin Antieau and the referee for several comments on a draft.  This work was done while the
author was a Clay Research Fellow. 

\section{Topological Hochschild homology and cyclotomic spectra}

Let $\mathcal{C}$ be a $k$-linear stable $\infty$-category over a perfect field
$k$
of characteristic $p> 0$. 
A basic invariant of $\mathcal{C}$ 
which we will use essentially in this paper is the 
\emph{topological Hochschild homology} 
$\THH(\mathcal{C})$. 
The construction $\THH(\mathcal{C})$ is
one of a general class of \emph{localizing} invariants of stable $\infty$-categories, including algebraic $K$-theory, and about which there is a significant
literature; compare for example \cite{BGT}.

The construction $\mathcal{C} \mapsto \THH(\mathcal{C})$ is naturally a functor to the homotopy
theory of spectra, and can be refined substantially to the homotopy theory of
\emph{cyclotomic} spectra. 
By definition, $\THH(\mathcal{C} )$ is the Hochschild
homology of $\mathcal{C}$ relative to the sphere spectrum rather than to an
ordinary ring.
As we show below, $\THH(\mathcal{C})$ contains significant information
about the Hochschild homology
$\HH(\mathcal{C}/k)$ and the spectral sequence for $\HP(\mathcal{C}/k)$. 
We begin by giving a brief overview of the relevant structure in this case.

\subsection{Review of stable $\infty$-categories}

We will use the language of stable $\infty$-categories, following 
Lurie \cite[Sec. 1.2]{HA}. 
Furthermore, 
we use the following notation, as in \cite{BGT}. 

\begin{definition}[The $\infty$-category $\catst$] 
We let $\catst$
denote the $\infty$-category of idempotent-complete,
small stable $\infty$-categories, where the morphisms are exact functors. 
\end{definition} 

\newcommand{\einf}{\mathbf{E}_\infty}

Recall 
\cite[Sec. 3.1]{BGT}
that $\catst$ itself acquires the structure of a symmetric monoidal 
$\infty$-category, via the Lurie tensor product \cite[Sec. 4.8]{HA}. Given $\mathcal{C}, \mathcal{D} \in \catst$, the tensor
product $\mathcal{C} \otimes \mathcal{D}$ is universal for the structure that
one has a functor of $\infty$-categories $\mathcal{C} \times \mathcal{D} \to \mathcal{C} \otimes
\mathcal{D}$ which is biexact in each variable. 

Alternatively, one can give an approach to $\catst$ via the (point-set) theory of
\emph{spectrally enriched
categories}, i.e., categories enriched over a good symmetric monoidal
category of spectra, e.g., symmetric spectra \cite{HSS00} or orthogonal spectra
\cite{MM02}. The homotopy theory of
spectrally enriched categories is studied in \cite{Tab09}. See \cite[Theorem
4.23]{BGT} and \cite[Theorem 4.6]{BGTmult} for a comparison with $\catst$.

Next, we recall the theory of $R$-linear $\infty$-categories. 
An example of an object in $\catst$ is the stable $\infty$-category $\perf(R)$
of perfect $R$-modules, for $R$ an $\einf$-ring; in fact, via the $R$-linear
tensor product this is a commutative algebra object in $\catst$.

\begin{definition}[Linear $\infty$-categories] 
Given an $\einf$-ring $R$, one has also a symmetric monoidal $\infty$-category
of \emph{$R$-linear} (idempotent-complete) stable $\infty$-categories (often
abbreviated to ``$R$-linear $\infty$-categories''). 
By definition, this is the $\infty$-category 
$\mod_{\catst}( \perf(R))$
of modules in $\catst$ over the
commutative algebra object $\perf(R) \in \catst$. 
\end{definition} 

See \cite[Appendix D.1]{SAG} for a treatment of the theory. 
We will mostly be interested in the case where $R  = k$ is a field. In this
case, one can also use the more classical theory of differential graded
categories; cf. \cite{cohn} for a comparison.

\subsection{Topological Hochschild homology}
We will use the theory of topological Hochschild homology  for objects in
$\catst$. Treatments (which go through the language of spectral categories) 
appear  in \cite{BM12, BGTmult, ABGHLM}; one can also formulate the construction
purely $\infty$-categorically \cite{ayala-mg-rozenblyum}. 

\begin{construction}[Cyclic bar construction and $\THH$] 
Given a spectrally enriched category $\mathcal{C}$, one defines the 
\emph{topological Hochschild homology} $\mathrm{THH}(\mathcal{C})$ as the geometric realization of the
classical cyclic bar construction on $\mathcal{C}$.

The cyclic bar construction defines a symmetric monoidal functor 
from the category of spectrally enriched categories to the $\infty$-category of
cyclic objects in $\sp$. After taking the geometric realization, it carries Morita equivalences to equivalences of
spectra, and therefore descends to $\catst$. 
It follows that we obtain a symmetric monoidal
functor
\[ \THH: \catst \to \fun(BS^1, \sp).  \]

\end{construction} 

A fundamental feature of $\THH$ (which, as we discuss below, is not shared by 
ordinary Hochschild homology)
is that it acquires a lift to the
$\infty$-category of \emph{cyclotomic spectra}, studied by many authors
including
\cite{BMcyc, nikolaus-scholze, ayala-mg-rozenblyum}. 
We follow the elegant definition of \cite{nikolaus-scholze}, which agrees with
those of the
others authors in the bounded-below case. 

\begin{definition}[Cyclotomic spectra] 
A ($p$-typical) \emph{cyclotomic spectrum} consists 
of an object $X \in \fun(BS^1, \sp)$ together with a map $\varphi: X \to
X^{tC_p}$ in $\fun(BS^1, \sp)$, where we 
regard $X^{tC_p}$ as a spectrum with an $S^1 \simeq S^1/C_p$-action. 
We let $\CycSp$ denote the presentably symmetric monoidal stable
$\infty$-category of cyclotomic spectra. 
\end{definition} 

The topological Hochschild homology of $\mathcal{C} \in \catst$ can be refined
to a cyclotomic spectrum. 
In fact, topological Hochschild homology yields a  symmetric monoidal functor 
$$\THH: \catst \to \CycSp.$$

\newcommand{\symsp}{\mathrm{Spectra}^{\mathrm{O}}}
\begin{construction}[$\THH$ of spectral categories and stable
$\infty$-categories] 
We briefly sketch a construction 
of topological Hochschild homology of an object of $\catst$ as a cyclotomic
spectrum, following \cite{nikolaus-scholze}; it is also
possible to give a  $\infty$-categorical construction as in
\cite{ayala-mg-rozenblyum}.  
Since $\catst$ is obtained as a localization of spectrally enriched
categories, it suffices to carry this construction out for a spectrally enriched
category, where one has a well-defined set of objects. 

Let $\mathfrak{C}$ be a spectrally enriched category, i.e., a category enriched
over the category of orthogonal spectra, $\symsp$. 
In this case, one constructs the Hochschild-Mitchell cyclic nerve
$$N^{\mathrm{cyc}}(\mathfrak{C}): \Lambda \to \symsp,$$ 
for $\Lambda$ the cyclic category. 
Let $\Lambda_p \to \Lambda$ be the edgewise subdivision  (e.g., \cite[Appendix
B]{nikolaus-scholze}). 
Its $p$th edgewise subdivision $\mathrm{sd}_p ( N^{\mathrm{cyc}}(\mathcal{C}))$
yields a functor $\Lambda_p \to \symsp$. 
Unwinding the definitions and using the Tate diagonal as in
\cite{nikolaus-scholze}, one obtains 
a map in the $\infty$-category of cyclic spectra,
$N^{\mathrm{cyc}}(\mathfrak{C}) \to  ( N^{\mathrm{cyc}}( \mathfrak{C}) \circ
\mathrm{sd}_p)^{tC_p} $. 
Taking geometric realizations, one obtains the 
cyclotomic structure $\varphi: \THH(\mathfrak{C}) \to \THH(\mathfrak{C})^{tC_p}$, and all constructions are lax
symmetric monoidal. 
\end{construction}

We now specialize to the $R$-linear case, where $R$ is an $\einf$-ring. 
Since $\THH$ is a symmetric monoidal functor, 
it follows that 
if $\mathcal{C}$ is an $R$-linear stable $\infty$-category, then 
$\THH(\mathcal{C})$ is a module in $\CycSp$ over $\THH(R) = \THH(\perf(R)),$
which is an $\einf$-algebra in $\CycSp$.  
Moreover, $\THH$ defines a symmetric monoidal functor from $R$-linear stable
$\infty$-categories to $\mod_{\CycSp}( \THH(R))$.

\begin{construction}[Relative Hochschild homology] 
Let $R$ be an $\einf$-ring and let 
$\mathcal{C}$ be an $R$-linear $\infty$-category.
We define the \emph{relative
Hochschild homology} $\HH(\mathcal{C}/R) \in \fun(BS^1, \mod(R))$ as the
relative tensor product
\begin{equation} \label{THHtoHH} \HH(\mathcal{C}/R) =\THH(\mathcal{C})
\otimes_{\THH(R)} R , \end{equation}
where we use the canonical $S^1$-equivariant map $\THH(R) \to R$. 
\end{construction} 

\begin{remark} 
Suppose $R$ arises from a commutative orthogonal ring spectrum $R^o$. 
For an $R$-linear $\infty$-category $\mathcal{C}$ presented via 
an $R^o$-spectral category $\mathfrak{C}$ (i.e., a category enriched over
$R^o$-modules), it follows by comparing cyclic bar constructions  and using the
symmetric monoidality of geometric realizations that the above agrees with the
usual definition of $\HH(\mathcal{C}/R)$. For example, 
when $R = k$, this agrees with the usual definition of Hochschild homology for a
dg category. 
\end{remark} 
\begin{remark}[Properties of $\THH$] 
The primary focus of the paper is  about the Hochschild homology of smooth and
proper $k$-linear categories, for $k$ a field. Nonetheless, the use of topological Hochschild
homology appears for the following two reasons: 
\begin{enumerate}
\item $\THH$ is a more primitive invariant: according to  \eqref{THHtoHH}, we can recover
Hochschild homology from $\THH$. 
\item $\THH$ has the additional structure given by the cyclotomic Frobenius
$\varphi$, which does not exist on ordinary Hochschild homology.  
\end{enumerate}
\end{remark}

\subsection{Topological Hochschild homology over $k$}

We will especially be interested in the topological Hochschild homology of a
$k$-linear $\infty$-category, for $k$ a perfect field of characteristic $p> 0$,
which exhibits some special features. 
A basic input here is the calculation in the case when $\mathcal{C}
= \perf(k)$, recalled below (cf. \cite[Sec. 5]{HM97}). \begin{theorem}[B\"okstedt]
\label{bok}
 \( \THH_*(k) \simeq k[\sigma], \quad |\sigma| = 2.  \)
\end{theorem}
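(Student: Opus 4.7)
The plan is to first reduce from a general perfect field $k$ of characteristic $p$ to the prime field $\mathbb{F}_p$, and then compute $\THH_*(\mathbb{F}_p)$ via Bökstedt's spectral sequence.

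For the reduction, I would use that $k$ perfect of characteristic $p$ is the filtered colimit of its finite (hence separable) subfields over $\mathbb{F}_p$, so the classical ring map $\mathbb{F}_p \to k$ is ind-étale. The induced map of $E_\infty$-ring spectra $H\mathbb{F}_p \to Hk$ is then étale in the derived sense, and the general fact that $\THH$ commutes with étale base change of $E_\infty$-rings yields
$$\THH(k) \simeq \THH(\mathbb{F}_p) \otimes_{H\mathbb{F}_p} Hk.$$
On homotopy groups this gives $\THH_*(k) \simeq \THH_*(\mathbb{F}_p) \otimes_{\mathbb{F}_p} k$, so it suffices to show $\THH_*(\mathbb{F}_p) \cong \mathbb{F}_p[\sigma]$ with $|\sigma| = 2$.

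For this core computation, I would analyze $\THH(\mathbb{F}_p)$ modulo $p$, i.e. $\THH(\mathbb{F}_p) \otimes_{H\mathbb{F}_p} H\mathbb{F}_p$, by realizing it as the geometric realization of the cyclic bar construction on $H\mathbb{F}_p \otimes_{\mathbb{S}} H\mathbb{F}_p$. The associated Bökstedt spectral sequence has the form
$$E^2_{s,t} = \HH_s(\mathcal{A}_*)_t \Longrightarrow \THH_{s+t}(\mathbb{F}_p;\mathbb{F}_p),$$
where $\mathcal{A}_* = \pi_*(H\mathbb{F}_p \otimes_{\mathbb{S}} H\mathbb{F}_p)$ is the dual Steenrod algebra. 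Since $\mathcal{A}_*$ is a free graded-commutative $\mathbb{F}_p$-algebra (polynomial on the Milnor generators $\xi_i$ at $p=2$, and polynomial-exterior on the $\xi_i, \tau_i$ for odd $p$), its classical Hochschild homology is given explicitly by a Koszul-type formula. One then checks the spectral sequence collapses at $E^2$, using the multiplicative structure and the grading, to obtain a complete description of $\THH_*(\mathbb{F}_p;\mathbb{F}_p)$ as $\mathcal{A}_* \otimes \mathbb{F}_p[\sigma] \otimes \Lambda(\cdots)$ with $\sigma$ of degree $2$.

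Finally, I would descend from $\THH_*(\mathbb{F}_p;\mathbb{F}_p)$ back to $\THH_*(\mathbb{F}_p)$ by exploiting that $\THH(\mathbb{F}_p)$ is a connective $H\mathbb{F}_p$-algebra: the Künneth relationship $\THH_*(\mathbb{F}_p;\mathbb{F}_p) \cong \THH_*(\mathbb{F}_p) \otimes_{\mathbb{F}_p} \mathcal{A}_*$, combined with the comparison map to ordinary $\HH_*(\mathbb{F}_p/\mathbb{F}_p) = \mathbb{F}_p$, isolates a single even-degree polynomial generator $\sigma$ in degree $2$ and forces all exterior contributions to lie in the $\mathcal{A}_*$ factor. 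The main obstacle is the Bökstedt spectral sequence analysis itself, both its $E^2$-collapse and the multiplicative identification of the polynomial generator; this is where all the genuinely stable-homotopical content lies, and any modern streamlining (e.g.\ through Dyer--Lashof operations on $\pi_* \THH$, or through the cyclotomic Frobenius) ultimately relies on an input of roughly this depth.
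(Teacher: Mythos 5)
First, a remark on what you are being compared against: the paper does not prove this statement at all — it is quoted as B\"okstedt's theorem with a pointer to \cite{HM97} — so your proposal is necessarily supplying an argument where the paper supplies a citation. Your overall strategy (reduce to $\mathbb{F}_p$, then run the B\"okstedt spectral sequence off the dual Steenrod algebra) is the classical route, but two of your steps are wrong as written.

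The reduction to $\mathbb{F}_p$ is justified incorrectly. A perfect field of characteristic $p$ need not be algebraic over $\mathbb{F}_p$ (take the perfection of $\mathbb{F}_p(t)$), so it is not a filtered colimit of finite subfields, and $\mathbb{F}_p \to k$ is not ind-\'etale in the classical sense; \'etale base change for $\THH$ as usually stated carries finiteness hypotheses your map does not satisfy. The base-change formula $\THH(k) \simeq \THH(\mathbb{F}_p) \otimes_{\mathbb{F}_p} k$ is nevertheless true, but the correct input is that $L_{k/\mathbb{F}_p} = 0$ for perfect $k$ (Frobenius is simultaneously invertible and zero on the cotangent complex), whence $\HH(k/\mathbb{F}_p) \simeq k$; combined with $\THH(k) \otimes_{\THH(\mathbb{F}_p)} \mathbb{F}_p \simeq \HH(k/\mathbb{F}_p)$ and a connectivity argument this recovers the desired equivalence.

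The more serious gap is the collapse claim. (Also note that ``$\THH(\mathbb{F}_p) \otimes_{H\mathbb{F}_p} H\mathbb{F}_p$'' is just $\THH(\mathbb{F}_p)$; you mean its mod $p$ homology, i.e.\ the smash product with $H\mathbb{F}_p$ over the sphere.) For $p = 2$ the B\"okstedt spectral sequence does collapse at $E^2$, since it is multiplicatively generated in Hochschild filtration $\leq 1$, and the real work is the multiplicative extension $(\sigma\xi_i)^2 = \sigma\xi_{i+1}$, which needs Dyer--Lashof operations. For odd $p$, however, $E^2 = \mathcal{A}_* \otimes \Lambda(\sigma\xi_i) \otimes \Gamma(\sigma\tau_i)$ and the spectral sequence emphatically does \emph{not} collapse: if it did, the abutment would contain odd-degree classes $\sigma\xi_{i+1}$ and extra divided-power classes, contradicting $\THH_*(\mathbb{F}_p) \cong \mathbb{F}_p[\sigma]$. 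These classes cancel against each other via B\"okstedt's differentials $d^{p-1}\gamma_p(\sigma\tau_i) \doteq \sigma\xi_{i+1}$, leaving $E^\infty = \mathcal{A}_* \otimes \bigotimes_i \mathbb{F}_p[\sigma\tau_i]/\left((\sigma\tau_i)^p\right)$, after which one still needs the multiplicative extension $(\sigma\tau_i)^p = \sigma\tau_{i+1}$. Establishing these differentials (or bypassing them via Dyer--Lashof operations or the coalgebra structure of the spectral sequence, as in Angeltveit--Rognes) is the actual content of the theorem, so the proposal has a genuine hole exactly at its central step.
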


\begin{remark} 

Theorem~\ref{bok} shows that $\THH$ can be controlled in a convenient manner. 
A more naive variant of the construction $\mathcal{C} \mapsto \THH(\mathcal{C})$ is to consider the Hochschild
homology $\HH(\mathcal{C}/\mathbb{Z})$ over the integers. Since (by a straightforward
calculation) $ \HH_*(\mathbb{F}_p/\mathbb{Z}) \simeq \Gamma(\sigma)$ is a divided power algebra on a
degree two class, the 
construction of $\THH$ should be regarded as an ``improved'' version of
Hochschild homology over $\mathbb{Z}$. 
\end{remark}

As in \eqref{THHtoHH}, one 
has 
the relation
\begin{equation} \label{basechange} \THH(\mathcal{C}) \otimes_{\THH(k)} k \simeq \HH(\mathcal{C}/k).
\end{equation}
As a result of \eqref{basechange}, $\THH(\mathcal{C})$ can be thought of as a 
one-parameter deformation of $\HH(\mathcal{C}/k)$ over the element $\sigma$.

Recall $\THH(\mathcal{C})$ inherits an action of the circle $S^1$. The
circle  also  acts on $\THH(k)$ (considered as an $\mathbf{E}_\infty$-ring
spectrum), and $\THH$ provides a symmetric monoidal functor
\[ \left\{k\text{-linear \ stable} \ \infty \text{-categories}\right\}  \to
\mathrm{Mod}_{\THH(k)}( \Sp^{BS^1}),  \]
i.e., into the $\infty$-category of spectra with $S^1$-action equipped with a compatible
$\THH(k)$-action.
Using this, one can define the following (which can be thought of as a
noncommutative version of crystalline cohomology).

\begin{definition}[Hesselholt \cite{hesselholt-tp}]
The \emph{periodic topological cyclic homology}
of $\mathcal{C}$ is given by $\TP(\mathcal{C}) = \THH(\mathcal{C})^{tS^1}$. 
\end{definition}

A result of \cite{BMS2} (see also \cite[Sec. 3]{AMN}) shows that
$\TP$ provides a lift to characteristic zero
of 
the periodic cyclic homology $\HP(\mathcal{C}/k)$. 
For example, $\TP_*(k)  \simeq W(k)[x^{\pm 1}]$ for $|x|  = -2$, and in general
one has a natural equivalence of $\TP(k)$-modules 
\begin{equation}\label{TPliftsHP} \TP(\mathcal{C}) \otimes_{\TP(k)} \HP(k) \simeq
\TP(\mathcal{C}) /p \simeq 
\HP(\mathcal{C}/k) .
\end{equation}
The construction $\mathcal{C} \mapsto \TP(\mathcal{C})$ is another extremely
useful invariant one can extract from this machinery. It naturally provides a 
lax symmetric monoidal functor
\[ \left\{k\text{-linear \ stable $\infty$-categories}\right\}  \to
\mathrm{Mod}_{\TP(k)}.  \]
At least for smooth and proper $k$-linear $\infty$-categories, the construction $\TP$ is actually
symmetric monoidal, i.e., satisfies a K\"unneth theorem, by a result of
Blumberg-Mandell \cite{blumberg-mandell-tp} (see also
\cite{AMN}).

In \eqref{TPliftsHP}, we saw that 
periodic cyclic homology can be recovered from $\TP$ by reducing mod $p$. 
Next, we show that we can reconstruct $\HP$ from
$\THH$ in another way. 
Note first that there is a natural map of $\mathbf{E}_\infty$-rings $\TP(k)
\simeq \THH(k)^{tS^1}\to
\THH(k)^{tC_p}$.

\begin{proposition} 
\label{modp}
For $\mathcal{C}$ a $k$-linear stable $\infty$-category, one has an equivalence
of $\TP(k)$-module spectra
$\THH(\mathcal{C})^{tC_p} \simeq \TP(\mathcal{C}) \otimes_{\TP(k)}
\THH(k)^{tC_p} \simeq \HP(\mathcal{C}/k)$. 
\end{proposition}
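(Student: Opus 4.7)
The plan is to establish the two displayed equivalences by first identifying the base term $\THH(k)^{tC_p}$ and then propagating the identification via the $\TP(k)$-module structure; I treat the proposition as two linked claims that share this common computational input.

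I would begin with the key base case: the identification $\THH(k)^{tC_p} \simeq \TP(k)/p$ as a $\TP(k)$-algebra, where the $\TP(k)$-algebra structure on the target is given by the canonical restriction $\TP(k) = \THH(k)^{tS^1} \to \THH(k)^{tC_p}$. This is a direct calculation starting from B\"okstedt's Theorem~\ref{bok}: the Tate spectral sequences for the $S^1$- and $C_p$-actions on $\THH(k) = k[\sigma]$ collapse to give $\TP_*(k) = W(k)[x^{\pm 1}]$ and $\pi_* \THH(k)^{tC_p} = k[x^{\pm 1}]$ respectively, with the comparison map realizing the reduction modulo $p$.

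Given this base identification, the second equivalence in the statement drops out formally from \eqref{TPliftsHP} by derived base change:
\[
\TP(\mathcal{C}) \otimes_{\TP(k)} \THH(k)^{tC_p} \simeq \TP(\mathcal{C}) \otimes_{\TP(k)} \TP(k)/p \simeq \TP(\mathcal{C})/p \simeq \HP(\mathcal{C}/k).
\]
For the first equivalence, the natural $\TP(k)$-linear map
\[
\TP(\mathcal{C}) \otimes_{\TP(k)} \THH(k)^{tC_p} \longrightarrow \THH(\mathcal{C})^{tC_p}
\]
is constructed from the canonical restriction $\TP(\mathcal{C}) = \THH(\mathcal{C})^{tS^1} \to \THH(\mathcal{C})^{tC_p}$ together with the $\THH(k)^{tC_p}$-module structure on the target. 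To show it is an equivalence for every $\mathcal{C}$, I would argue by d\'evissage in the variable $\mathcal{C}$: both sides define localizing, $\TP(k)$-linear functors from $k$-linear stable $\infty$-categories to $\TP(k)$-modules that commute with filtered colimits (the latter using the boundedness of $\THH(\mathcal{C})$ as a $k$-module spectrum, so that $(-)^{tC_p}$ is well behaved). They agree tautologically at $\mathcal{C} = \perf(k)$, where both sides reduce to $\THH(k)^{tC_p}$ via the identity map, and a standard Morita-style extension then propagates the equivalence to arbitrary $\mathcal{C}$.

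The main obstacle is formalizing the d\'evissage step carefully---in particular, verifying that $\mathcal{C} \mapsto \THH(\mathcal{C})^{tC_p}$ defines a localizing invariant compatible with the filtered colimits and Verdier sequences used in the reduction to $\perf(k)$, and that the corresponding behavior on the source side is available for the same class of operations. Both facts are standard in the Nikolaus--Scholze framework \cite{nikolaus-scholze}, but care is required; in the smooth and proper case, one can alternatively use that $\THH(\mathcal{C})$ is a perfect $\THH(k)$-module and that Tate constructions commute with the action of perfect modules, which shortens the argument considerably.
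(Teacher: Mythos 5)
Your identification of $\THH(k)^{tC_p}\simeq \TP(k)/p$ and the resulting derivation of the second equivalence from \eqref{TPliftsHP} are fine, but the argument for the first equivalence has a genuine gap. The d\'evissage you propose---two localizing invariants that agree on $\perf(k)$ must agree on all $k$-linear stable $\infty$-categories---is not a valid principle: arbitrary $k$-linear stable $\infty$-categories are not generated from $\perf(k)$ by the operations (Verdier sequences, retracts, filtered colimits) that a localizing invariant respects. A concrete counterexample to the principle is the natural transformation $\THH(\mathcal{C})\otimes_{\THH(k)}\TP(k)\to\TP(\mathcal{C})$: both sides are localizing and agree tautologically on $\perf(k)$, yet the map fails to be an equivalence already for $\mathcal{C}=\perf(k[x])$. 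Moreover, the commutation with filtered colimits you invoke is false for both sides of your map, since $(-)^{tC_p}$ and $(-)^{tS^1}$ involve limits over $BC_p$ and $BS^1$ and do not preserve filtered colimits of spectra; so even the reduction steps of the d\'evissage are unavailable. Your fallback for the smooth and proper case (using that $\THH(\mathcal{C})$ lies in the thick subcategory of $\md_{\THH(k)}(\Sp^{BS^1})$ generated by the unit) is sound, but it does not cover the proposition as stated, which is for arbitrary $\mathcal{C}$.

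The paper avoids any category-level induction by working directly with the module $X=\THH(\mathcal{C})\in\md_{\THH(k)}(\Sp^{BS^1})$ and proving the statement for \emph{every} such $X$. The key inputs are: (i) the commutative square of $\mathbf{E}_\infty$-rings with vertices $\mathbb{Z}^{tS^1}$, $\mathbb{Z}^{tC_p}$, $\TP(k)$, $\THH(k)^{tC_p}$ is a pushout; and (ii) the Nikolaus--Scholze base-change lemma \cite[Lemma IV.4.12]{nikolaus-scholze} for Tate constructions of bounded-below (here, $H\mathbb{Z}$-linear) spectra with $S^1$-action, which gives $X^{tS^1}\otimes_{\mathbb{Z}^{tS^1}}\mathbb{Z}^{tC_p}\simeq X^{tC_p}$ and hence, via the pushout, the first equivalence for arbitrary $X$. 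Some such non-formal input about the interaction of $(-)^{tC_p}$ with $(-)^{tS^1}$ is unavoidable; it cannot be replaced by a purely Morita-theoretic reduction to $\perf(k)$. The second equivalence is then obtained, as you do, from $\THH(k)^{tC_p}\simeq\TP(k)/p$, together with the observation that $k=\THH(k)/\sigma$ lies in the thick subcategory generated by the unit, so that $(X\otimes_{\THH(k)}k)^{tS^1}\simeq X^{tS^1}/p$.
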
 
For future reference, we actually prove a more general statement. 
\begin{proposition} 

Let $X$ be an arbitrary object of the $\infty$-category
$\mathrm{Mod}_{\THH(k)}( \Sp^{BS^1})$ of modules over $\THH(k)$ in the
symmetric monoidal $\infty$-category of spectra equipped with an
$S^1$-action.\footnote{Compare the discussion in \cite{AMN} for a treatment.}

Then 
the natural map 
of $\TP(k)$-modules
\begin{equation}  X^{tS^1} \otimes_{\TP(k)} \THH(k)^{tC_p}  \to X^{tC_p}.
\label{cc} \end{equation} 
is an equivalence, and one has a natural equivalence of
$\TP(k)$-modules 
 \begin{equation} X^{tS^1}
\otimes_{\TP(k)} \THH(k)^{tC_p} \simeq (X \otimes_{\THH(k)} k)^{tS^1}.
\label{dd}
\end{equation}
\end{proposition}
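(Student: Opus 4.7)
My plan is to prove (1) and (2) simultaneously, reducing them to a single identification of $\THH(k)^{tC_p}$ as a $\TP(k)$-module.

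First, I would observe that applying the exact functor $(-)^{tS^1}$ to the cofiber sequence
\[ \Sigma^2 X \xrightarrow{\sigma} X \to X \otimes_{\THH(k)} k \]
in $\Sp^{BS^1}$ (using $\THH(k)/\sigma \simeq k$, so that $X \otimes_{\THH(k)} k \simeq X/\sigma$) gives a natural equivalence $(X \otimes_{\THH(k)} k)^{tS^1} \simeq X^{tS^1}/\sigma \simeq X^{tS^1} \otimes_{\TP(k)} \TP(k)/\sigma$. Hence (2) is equivalent to a natural identification of $\TP(k)$-modules $\THH(k)^{tC_p} \simeq \TP(k)/\sigma$, and, given this, (1) reduces to the claim that $X^{tC_p} \simeq X^{tS^1}/\sigma$ for arbitrary $X$.

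To establish this last equivalence, I would argue that both sides are exact functors of $X \in \mathrm{Mod}_{\THH(k)}(\Sp^{BS^1})$ with values in $\THH(k)^{tC_p}$-modules. They agree when $X = \THH(k)$: both equal $\HP(k) = k^{tS^1}$ (the right side by applying the exactness argument above to $\THH(k)$ itself). Both also vanish on the free $S^1$-spectrum $X = \THH(k) \otimes S^1_+$, since free $S^1$-spectra have vanishing Tate construction with respect to any closed subgroup of $S^1$. A thick subcategory argument then handles all compact objects.

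For arbitrary $X$, the main input is the Tate-orbit lemma of Nikolaus-Scholze, which identifies $X^{tS^1} \simeq (X^{tC_p})^{hS^1}$ for bounded-below $X$. Combined with the cyclotomic Frobenius $\THH(k) \to \THH(k)^{tC_p}$ and descent along the resulting residual $S^1$-action on $\THH(k)^{tC_p}$, this yields the general case. Specializing to $X = \THH(k)$ recovers the needed identification $\THH(k)^{tC_p} \simeq \TP(k)/\sigma$; alternatively, this can be checked directly via the $C_p$-Tate spectral sequence for $\pi_* \THH(k) = k[\sigma]$.

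The hard part is the descent step for $X$ not bounded below, since $(-)^{tS^1}$ does not preserve filtered colimits, obstructing a naive density argument. For the applications in the paper -- smooth and proper dg categories, whose $\THH$ gives a perfect $\THH(k)$-module -- only compact $X$ is needed, so the thick subcategory argument suffices for all cases of interest.
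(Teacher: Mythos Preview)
Your treatment of \eqref{dd} is essentially the paper's: both arguments exploit the cofiber sequence $\Sigma^2 X \xrightarrow{\sigma} X \to X/\sigma$ to identify $(X \otimes_{\THH(k)} k)^{tS^1}$ with $X^{tS^1}$ tensored with $\TP(k)/\sigma$. The paper phrases this as ``$k = \THH(k)/\sigma$ belongs to the thick subcategory generated by the unit'' and writes $\TP(k)/p$ rather than $\TP(k)/\sigma$, but since $\sigma$ and $p$ differ by a unit in $\TP_*(k)$ these are the same identification.

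For \eqref{cc}, your approach is genuinely different and leaves the gap you name. The paper does not attempt a thick-subcategory or Tate-orbit argument at all. Instead, it observes that the $S^1$-equivariant map $\mathbb{Z} \to \THH(k)$ (from the cyclotomic trace) makes
\[
\xymatrix{
\mathbb{Z}^{tS^1} \ar[r] \ar[d] & \mathbb{Z}^{tC_p} \ar[d] \\
\TP(k) \ar[r] & \THH(k)^{tC_p}
}
\]
a pushout of $\mathbf{E}_\infty$-rings, and then invokes \cite[Lemma~IV.4.12]{nikolaus-scholze}, which gives the analogous equivalence $X^{tS^1} \otimes_{\mathbb{Z}^{tS^1}} \mathbb{Z}^{tC_p} \simeq X^{tC_p}$ for any $\mathbb{Z}$-module in $\Sp^{BS^1}$, with no boundedness hypothesis. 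Base-changing along the pushout gives \eqref{cc} for arbitrary $X$ in one line. So the paper's proof is both shorter and strictly stronger than your sketch: it handles the unbounded case that you explicitly leave open, and it does so without any case analysis. Your observation that only perfect $X$ is needed for the applications is correct, and your thick-subcategory argument for such $X$ (checking on the unit $\THH(k)$) is valid; but the separate check on $\THH(k) \otimes S^1_+$ is irrelevant to that argument, and the Tate-orbit route you outline for bounded-below $X$ is both more work and still does not reach the full statement.
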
 
\begin{proof} 
To see 
this, we note that there is an $S^1$-equivariant map of $\mathbf{E}_{\infty}$-rings
$\mathbb{Z} \to \THH(\mathbb{F}_p)$, e.g., via the cyclotomic trace (cf. 
\cite[IV.4]{nikolaus-scholze}). One obtains
a 
 square 
of $\mathbf{E}_\infty$-rings
\[ \xymatrix{
\mathbb{Z}^{tS^1} \ar[d]  \ar[r] &  \mathbb{Z}^{tC_p} \ar[d]  \\
\TP(k) \ar[r] &  \THH(k)^{tC_p}
},\]
which one easily checks to be a pushout square.
Now the equivalence \eqref{cc} follows from \cite[Lemma
IV.4.12]{nikolaus-scholze}. To see \eqref{dd}, we use the fact 
that 
$\THH(k)^{tC_p} \simeq \TP(k)/p$ as $\TP(k)$-modules. 
This implies the result 
via the formula $(X \otimes_{\THH(k)} k)^{tS^1} \simeq X^{tS^1}
\otimes_{\TP(k)} k^{tS^1} \simeq X^{tS^1} /p$, which holds because $k =
\THH(k)/\sigma$ belongs
to the thick
subcategory generated by the unit in $\md_{\THH(k)}( \Sp^{BS^1})$
(and which is a generalization of 
\eqref{TPliftsHP}). 
\end{proof}

\subsection{The cyclotomic Frobenius over $k$} 

\begin{example}[{Cf. \cite[IV.4]{nikolaus-scholze} and \cite{HM97}}] 
Suppose $\mathcal{C} = \perf(k)$. In this case, the map
\[ \varphi\colon  \THH(k) \to \THH(k)^{tC_p}  \]
identifies the former with the connective cover of the latter, and
$\pi_* \left(\THH(k)^{tC_p}\right) \simeq k[u^{\pm 1}]$ is a Laurent polynomial ring with $|u| =
2$. The map
$\varphi$ is given by the Frobenius on $\pi_0$ and sends $\sigma \mapsto u$.  
In particular, $\varphi$ induces an equivalence
\[ \THH(k)[1/\sigma] \simeq \THH(k)^{tC_p}.  \]
This computation was originally done by Hesselholt-Madsen \cite{HM97}, 
and we refer to \cite[IV.4]{nikolaus-scholze} for a complete description of
$\THH(k)$ as a cyclotomic spectrum. 
\end{example}

Here $\THH(k) \in \mathrm{CAlg}(\CycSp)$ is a commutative algebra object, and
for $\mathcal{C}$ a $k$-linear stable $\infty$-category, 
$\THH(\mathcal{C})$ is a $\THH(k)$-module. 
The functor $\THH$ yields a symmetric monoidal functor
\[ \left\{k\text{-linear \ stable} \ \infty \text{-categories}\right\}  \to
\mathrm{Mod}_{\THH(k)}( \CycSp).  \]
Note in particular that for a smooth and proper $k$-linear stable
$\infty$-category (cf. \cite[Ch. 11]{SAG} for an account), $\THH$ is therefore a dualizable object of 
$\mathrm{Mod}_{\THH(k)}( \CycSp)$. 
In this paper, all 
our degeneration arguments will take place in the latter $\infty$-category, and
we will often state them in that manner.

We saw above that the 
cyclotomic Frobenius becomes an equivalence on connective covers for $\THH(k)$.
More generally, one can show (cf. \cite{hesselholt} and \cite[Cor. 8.18]{BMS2}) that for a smooth
commutative $k$-algebra, the cyclotomic Frobenius is an equivalence in high enough degrees. 
For our purposes, we need 
a basic observation that in the smooth and proper case, the cyclotomic Frobenius 
becomes an equivalence after inverting $\sigma$. This is a formal dualizability
argument once one knows both sides satisfy a K\"unneth formula. 

\begin{proposition} 
\label{noncommCart}
Let $\mathcal{C}/k$ be a smooth and proper $k$-linear stable $\infty$-category. In this case, the
cyclotomic Frobenius implements an
equivalence
\[ \THH(\mathcal{C})[1/\sigma] \stackrel{\varphi}{\simeq} \THH(\mathcal{C})^{tC_p}
\simeq \HP(\mathcal{C}/k).  \]
More generally, if $X \in \md_{\THH(k)}(\CycSp)$ is a dualizable object, then
the cyclotomic Frobenius implements an equivalence 
\[ X[1/\sigma] \xrightarrow{\varphi} X^{tC_p} \simeq (X \otimes_{\THH(k)}
k)^{tS^1}.  \]
The first equivalence is a $\varphi$-semilinear for the equivalence
$\varphi\colon  \THH(k)[1/\sigma] \simeq
\THH(k)^{tC_p}$, while the second equivalence is $\TP(k)$-linear. 
\end{proposition}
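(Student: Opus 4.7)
The second equivalence follows directly from Proposition~\ref{modp}: applied to $X$ viewed as a module over $\THH(k)$ in $\Sp^{BS^1}$ via the forgetful functor from $\md_{\THH(k)}(\CycSp)$, the two equivalences stated there combine to give a natural $\TP(k)$-linear equivalence $X^{tC_p} \simeq (X \otimes_{\THH(k)} k)^{tS^1}$.

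For the first equivalence, the plan is to view both sides as symmetric monoidal functors of $X$ and apply a formal dualizability argument. The localization $X \mapsto X[1/\sigma]$ is manifestly a symmetric monoidal functor $\md_{\THH(k)}(\CycSp) \to \md_{\THH(k)[1/\sigma]}$. The construction $X \mapsto X^{tC_p}$, composed with the forgetful functor to $\md_{\THH(k)}(\Sp^{BS^1})$, defines a lax symmetric monoidal functor to $\md_{\THH(k)^{tC_p}}$. The key input is that this second functor becomes strongly symmetric monoidal on dualizable objects, in the sense that the natural lax structure map
\[ X^{tC_p} \otimes_{\THH(k)^{tC_p}} Y^{tC_p} \to (X \otimes_{\THH(k)} Y)^{tC_p} \]
is an equivalence whenever $X$ and $Y$ are dualizable in $\md_{\THH(k)}(\CycSp)$. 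This is a $C_p$-Tate analog of the Blumberg-Mandell K\"unneth formula for $\TP = (-)^{tS^1}$ already recalled in the text, and can be proved by essentially the same argument.

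Now the cyclotomic Frobenius gives a natural map $\varphi_X\colon X \to X^{tC_p}$, and $\sigma$ acts invertibly on the target since its image in $\pi_* \THH(k)^{tC_p}$ is the invertible class $t$ from the Hesselholt-Madsen computation recalled in the example above; thus $\varphi_X$ factors canonically through $X[1/\sigma]$. Under the resulting identification $\varphi\colon \THH(k)[1/\sigma] \xrightarrow{\sim} \THH(k)^{tC_p}$ of the two target $\infty$-categories, the collection of maps $\{X[1/\sigma] \to X^{tC_p}\}$ assembles into a symmetric monoidal natural transformation between the two symmetric monoidal functors above, which by construction is an equivalence on the unit $X = \THH(k)$. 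It is a general formal fact that a symmetric monoidal natural transformation between symmetric monoidal functors is an equivalence on every dualizable object once it is an equivalence on the monoidal unit; the inverse on a dualizable $X$ is built from the evaluation and coevaluation maps of the duality data together with the inverse on the unit. Applied in our setting, this yields the desired equivalence $X[1/\sigma] \xrightarrow{\varphi} X^{tC_p}$ for every dualizable $X$, with the claimed $\varphi$-semilinearity.

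The main technical point, as the paper suggests, is thus the K\"unneth formula for $(-)^{tC_p}$ on dualizable modules; everything else is categorical and the linearity statements can then be traced through the constructions.
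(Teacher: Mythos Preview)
Your proposal is correct and follows essentially the same approach as the paper: invoke Proposition~\ref{modp} for the second equivalence, then for the first observe that both $X \mapsto X[1/\sigma]$ and $X \mapsto X^{tC_p}$ are symmetric monoidal on dualizable objects (the latter via a $C_p$-Tate K\"unneth formula as in \cite{blumberg-mandell-tp, AMN}), with $\varphi$ a symmetric monoidal natural transformation between them, so the formal dualizability argument \cite[Prop.~4.6]{AMN} applies. Your write-up is more expanded than the paper's, but the logical structure is identical; note that the clause ``once it is an equivalence on the monoidal unit'' is in fact automatic for a monoidal natural transformation between strong monoidal functors.
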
 
\begin{proof} 
By Proposition~\ref{modp},  it suffices to prove that $\varphi$ is an isomorphism.
In fact, both the source and target of $\varphi$ are symmetric monoidal functors
from dualizable objects in $\md_{\THH(k)}(\CycSp)$
to the $\infty$-category of $\THH(k)[1/\sigma] \simeq \THH(k)^{tC_p}$-module
spectra (cf. \cite{blumberg-mandell-tp, AMN}) 
and the natural transformation is one of symmetric monoidal functors. Thus the
map is an equivalence for formal reasons \cite[Prop. 4.6]{AMN}. 
\end{proof}

Let $\mathcal{C}$ be smooth and proper over $k$. 
On homotopy groups, it follows that one has isomorphisms of abelian groups $\pi_i
\THH(\mathcal{C})[1/\sigma] \simeq \pi_i \HP(\mathcal{C}/k)$. Both sides are
$k$-vector spaces, and the isomorphism is semilinear for the Frobenius. 
In particular, at the level of $k$-vector spaces, one has a natural isomorphism
\[ \left(\pi_i \THH(\mathcal{C})[1/\sigma] \right)^{(1)} \simeq \HP_i
(\mathcal{C}/k). \]

\begin{remark} 
Suppose $\mathcal{C} = \perf(A)$ for $A$ a smooth commutative $k$-algebra. 
In this case, $\HP(\mathcal{C}/k)$ is related to 2-periodic de Rham cohomology
of
$A$ (see 
\cite{antieauperiodic})
while $\THH(\mathcal{C})[1/\sigma]$ is closely related to  2-periodic
differential forms on $\mathcal{C}$   by  \cite{hesselholt} (and more precisely
by \cite[Cor. 8.18]{BMS2}). 
The relationship between differential forms and de Rham cohomology arising here
is essentially the classical \emph{Cartier isomorphism}, and is made precise in the work of
Bhatt-Morrow-Scholze \cite{BMS2}.

In addition, we expect that 
\Cref{noncommCart} can be compared with the ``noncommutative Cartier
isomorphism'' studied by Kaledin \cite{K1, K2}. 
\end{remark}

\section{The degeneration argument}

In this section, we give the main degeneration argument. 
We begin with the following basic observation and definition. 

Let $R$ be an
$\mathbf{E}_\infty$-ring spectrum over $\mathbb{Z}$ (in this section, $R$ will be a field), and
let $M$ be an $R$-module spectrum equipped with an
$S^1$-action. Suppose the $R$-module $M$ is graded projective. Then the following are
equivalent: 
\begin{enumerate}
\item  
The $S^1$-Tate spectral sequence for $\pi_*( M^{tS^1})$ degenerates. 
\item The $S^1$-action on $M$ (as an $R$-module) is trivial. 
\end{enumerate}

Clearly the second assertion implies the first. To see the converse, we observe
that if the Tate spectral sequence degenerates, then by naturality, the homotopy
fixed point spectral sequence for $\pi_*(M)$ must degenerate too, so that the
map $\pi_*(M^{hS^1}) \to \pi_* (M)$ is surjective. 
Suppose $M$, as an underlying $R$-module, is obtained as the summand $Fe$
associated to an
idempotent endomorphism $e$ of a free $R$-module $F$. If we give $F$ the trivial
$S^1$-action, the degeneration of the homotopy 
fixed point spectral sequence
shows that we can realize the map  $F \to M$ as an $S^1$-equivariant map. 
Restricting now to the summand $Fe$ of $F$, we conclude that $M$ is equivalent
to $Fe$ (with trivial action). 
This is the way in which we regard the  degeneration of the $S^1$-Tate spectral sequence
as a \emph{formality} statement.

\begin{definition} 
Let $k$ be a field. 
Let $M \in \perf(k)^{BS^1}$. We say that $M$ is \emph{formal} if 
the $S^1$-Tate spectral sequence for $M^{tS^1}$ (or equivalently the homotopy
fixed point spectral sequence for $M^{hS^1}$) degenerates at $E_2$. 
This holds if and only if 
\begin{equation}  \label{hheq} \dim_k 
\pi_{\mathrm{even}}( M) = \dim_k \pi_0 M^{tS^1}, 
\quad
\dim_k 
\pi_{\mathrm{odd}}( M) = \dim_k \pi_1 M^{tS^1} \end{equation}
\end{definition}

For the rest of this section, $k$ is a perfect field of characteristic $p> 0$.
We will prove a formality criterion for objects of $\perf(k)^{BS^1}$.
Our main interest, of course, is in the following example; in this section, we will state our
arguments in the more general case of objects in $\perf(k)$ with $S^1$-action,
though. 
Consider a smooth and proper $k$-linear stable $\infty$-category $\mathcal{C}/k$ and its Hochschild
homology $\HH(\mathcal{C}/k)$. One has that $\dim_k \HH_*(\mathcal{C}/k)< \infty$
and that $\HH(\mathcal{C}/k)$ inherits a circle action.

\begin{definition}
We say that the \emph{Hodge-to-de Rham spectral sequence degenerates} for
$\mathcal{C}/k$ if $\HH(\mathcal{C}/k) \in \perf(k)^{BS^1}$ is formal. 
Equivalently, degeneration holds if and only if one has the numerical equalities
$\HH_{\mathrm{even}}(\mathcal{C}/k) = \HP_0(\mathcal{C}/k), \quad 
\dim_k \HH_{\mathrm{odd}}(\mathcal{C}/k) = \HP_1(\mathcal{C}/k)$. 
\end{definition}

One source of objects of $\perf(k)^{BS^1}$ is the $\infty$-category of
dualizable objects of $\md_{\THH(k)}(\CycSp)$. 
Given $X \in \md_{\THH(k)}(\CycSp)$, we have $X \otimes_{\THH(k)} k \in
\md_k^{BS^1}$ and if $X$ is dualizable, then $X \otimes_{\THH(k)} k$ is perfect
as a $k$-module.
For such objects, 
we will translate formality to a statement about $\THH(k)$-modules.
Note that $\HH(\mathcal{C}/k) \in \perf(k)^{BS^1}$ arises in this way, via $X =
\THH(\mathcal{C})$.

First, we need the
following observation about module spectra over $\THH(k)$, which follows from the
classification of finitely generated modules over a principal ideal domain. 
\newcommand{\rank}{\mathrm{rank}}

\begin{proposition} 
\label{perfectmod}
Any perfect $\THH(k)$-module
spectrum is equivalent to a direct sum of copies of suspensions of $\THH(k)$ and 
$\THH(k)/\sigma^n$ for various $n$. 
\end{proposition}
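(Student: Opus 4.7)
The plan is to combine the graded structure theorem over a principal ideal domain with an obstruction-free cell-by-cell lifting on the level of $\THH(k)$-modules. By B\"okstedt's theorem (Theorem~\ref{bok}), $\pi_*\THH(k) = k[\sigma]$ is a graded polynomial ring over the field $k$ on a single degree-$2$ class, hence a graded PID. For a perfect $M \in \md_{\THH(k)}$, the graded abelian group $\pi_*M$ is finitely generated over $k[\sigma]$, so the graded structure theorem furnishes a decomposition into cyclic summands: some free copies $k[\sigma]$ with chosen generators $x_i \in \pi_{d_i}M$, and some torsion copies $k[\sigma]/\sigma^{n_j}$ with chosen generators $y_j \in \pi_{e_j}M$ satisfying $\sigma^{n_j} y_j = 0$.

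To realize this splitting on the spectrum level, I would form the candidate
$$N := \bigoplus_i \Sigma^{d_i}\THH(k) \;\oplus\; \bigoplus_j \Sigma^{e_j}\bigl(\THH(k)/\sigma^{n_j}\bigr),$$
where $\THH(k)/\sigma^n$ is defined as the cofiber of multiplication by $\sigma^n$ on $\THH(k)$, and construct a $\THH(k)$-linear map $f \colon N \to M$. Each free generator $x_i$ tautologically provides a map $\Sigma^{d_i}\THH(k) \to M$. For each torsion generator $y_j$, applying $\mathrm{Map}_{\THH(k)}(-, M)$ to the defining cofiber sequence $\Sigma^{2n_j}\THH(k) \xrightarrow{\sigma^{n_j}} \THH(k) \to \THH(k)/\sigma^{n_j}$ yields a long exact sequence identifying the image of $[\Sigma^{e_j}(\THH(k)/\sigma^{n_j}), M]$ in $\pi_{e_j}M$ with the $\sigma^{n_j}$-torsion subgroup, so $y_j$ lifts to a map $\Sigma^{e_j}(\THH(k)/\sigma^{n_j}) \to M$. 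Summing these pieces gives $f$, which by construction carries each chosen summand generator of $\pi_*N$ to its counterpart in $\pi_*M$; by $k[\sigma]$-linearity the induced map $\pi_*f$ must then coincide with the isomorphism from the structure theorem, so $f$ is an equivalence.

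The only potential obstacle is the existence of the torsion lifts, and this disappears by exactness of the long exact sequence above --- ultimately because the graded ring $k[\sigma]$ has global dimension one. Equivalently, one could phrase the argument using the natural $t$-structure on $\md_{\THH(k)}$ whose heart is the abelian category of graded $k[\sigma]$-modules: a perfect object splits as a sum of shifts of objects in the heart since the heart has cohomological dimension $1$, and the classification then reduces to the structure theorem. Either route keeps essentially all of the content inside the graded PID $k[\sigma]$.
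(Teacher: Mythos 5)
Your main argument is correct and is exactly the paper's intended proof, which is stated in one line as following ``from the classification of finitely generated modules over a principal ideal domain'': since $\pi_*\THH(k)\simeq k[\sigma]$ is a graded PID, one decomposes $\pi_*M$ into cyclic summands, realizes each generator by a map from a shift of $\THH(k)$ or of the cofiber $\THH(k)/\sigma^n$ (the torsion lifts existing by the long exact sequence you write down), and checks the resulting map is a $\pi_*$-isomorphism. Your closing aside about a $t$-structure on $\md_{\THH(k)}$ with heart the graded $k[\sigma]$-modules is not something you should lean on --- the standard $t$-structure here has heart discrete $k$-modules, and the existence of a $t$-structure with the heart you describe is essentially a formality statement requiring separate justification --- but it is dispensable, and the cell-by-cell argument stands on its own.
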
 
\begin{proof} 
A perfect $\THH(k)$-module $M$ yields a finitely generated $\THH_*(k)$-module
$\pi_*(M)$. 
Any finitely generated graded $\THH_*(k) = k[\sigma]$-module is a direct sum of
copies of shifts of $k[\sigma]$ and $k[\sigma]/\sigma^n$, for various $n$. It
follows easily that $M$ can be written as a direct sum as desired. 
\end{proof}

The following result now shows that 
degeneration is equivalent to a condition of torsion-freeness on $\THH$.

\begin{proposition} 
\label{torsionfreedeg}
\begin{enumerate}
\item  
Let $X \in \md_{\THH(k)}(\CycSp)$ be dualizable. 
Then $X \otimes_{\THH(k)} k \in \perf(k)^{BS^1}$ is formal if and only if
$X$ is (graded) free (equivalently, $\sigma$-torsion-free) as a $\THH(k)$-module.
\item
If $\mathcal{C}$ is smooth and proper over $k$, 
the Hodge-to-de Rham spectral sequence for $\mathcal{C}$ degenerates
if and only if $\THH(\mathcal{C})$ is free (equivalently,
$\sigma$-torsion-free) as a $\THH(k)$-module.
\end{enumerate}
\end{proposition}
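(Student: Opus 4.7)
The plan is to apply the structure theorem for perfect $\THH(k)$-modules (Proposition \ref{perfectmod}) and reduce the dimension equalities in \eqref{hheq} to an explicit count over summands. Since any dualizable object of $\md_{\THH(k)}(\CycSp)$ is in particular perfect as a $\THH(k)$-module, we may decompose
\[ X \simeq \bigoplus_i \Sigma^{a_i}\THH(k) \oplus \bigoplus_j \Sigma^{b_j}\THH(k)/\sigma^{n_j} \]
as $\THH(k)$-modules. The stated equivalence between freeness and $\sigma$-torsion-freeness is then immediate from this decomposition, so it remains to show that formality of $X \otimes_{\THH(k)} k$ holds if and only if no torsion summand appears.

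For the left-hand side of \eqref{hheq}, tensoring the defining cofiber sequence $\Sigma^{2n}\THH(k) \xrightarrow{\sigma^n} \THH(k) \to \THH(k)/\sigma^n$ with $k$ over $\THH(k)$, and using that $\sigma$ acts as zero on $k$, gives $\THH(k)/\sigma^n \otimes_{\THH(k)} k \simeq k \oplus \Sigma^{2n+1}k$. Hence each torsion summand contributes one dimension to each of $\pi_{\mathrm{even}}(X \otimes_{\THH(k)} k)$ and $\pi_{\mathrm{odd}}(X \otimes_{\THH(k)} k)$, while each free summand $\Sigma^{a_i}\THH(k)$ contributes one dimension in the parity of $a_i$. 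For the right-hand side, combining equation \eqref{dd} with the cyclotomic Frobenius equivalence of Proposition \ref{noncommCart} identifies $(X \otimes_{\THH(k)} k)^{tS^1}$ with $X[1/\sigma]$ as $k$-vector spaces (the $\varphi$-semilinearity plays no role in dimension counts). Inverting $\sigma$ annihilates every torsion summand, while each free summand becomes $\Sigma^{a_i} k[\sigma^{\pm 1}]$ and contributes one dimension to $\pi_0$ of the Tate if $a_i$ is even and to $\pi_1$ if $a_i$ is odd.

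Comparing the two counts parity by parity, the formality equalities \eqref{hheq} reduce exactly to the statement that no torsion summand occurs, which gives (1). Part (2) follows at once on taking $X = \THH(\mathcal{C})$, which is dualizable in $\md_{\THH(k)}(\CycSp)$ when $\mathcal{C}$ is smooth and proper, together with the identification $\THH(\mathcal{C}) \otimes_{\THH(k)} k \simeq \HH(\mathcal{C}/k)$ from \eqref{basechange}. I do not anticipate a substantial obstacle beyond this bookkeeping; the subtlety worth flagging is that the use of Proposition \ref{noncommCart} requires dualizability of $X$ in $\md_{\THH(k)}(\CycSp)$ rather than mere perfectness over $\THH(k)$, which is precisely the hypothesis we are given.
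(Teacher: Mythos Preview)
Your argument is correct and follows essentially the same route as the paper: both use Proposition~\ref{noncommCart} to identify $(X\otimes_{\THH(k)}k)^{tS^1}$ with $X[1/\sigma]$ and then invoke the structure theorem (Proposition~\ref{perfectmod}) to reduce the dimension equalities \eqref{hheq} to the absence of torsion summands. The only cosmetic difference is that you decompose $X$ explicitly at the outset and count summand by summand, whereas the paper phrases the comparison as a rank equality over $k[\sigma^{\pm 1}]$ versus $k$ and appeals to Proposition~\ref{perfectmod} at the end.
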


\begin{proof} 
Clearly, the second assertion is a special case of the first. 
It suffices to compare with \eqref{hheq}. In fact, by the equivalence given by
Proposition~\ref{noncommCart}, one sees that 
$\pi_*(X)[1/\sigma]$ is a finitely generated graded free
$\THH(k)_*[1/\sigma]$-module. Moreover, one has
\[ \dim_k \pi_0\left((X \otimes_{\THH(k)} k)^{tS^1})\right) = \dim_k\left( \pi_0 (X[1/\sigma])
\right)= 
\rank_{k[\sigma^{\pm 1}]} \pi_{\mathrm{even}}(X)[1/\sigma],
\]
and similarly for the odd terms.
Thus, formality holds if and only if 
the ranks agree, i.e.,
\[ \rank_{k[\sigma^{\pm 1}]} \pi_{\mathrm{even}}(X)[1/\sigma]
= \dim_k \pi_{\mathrm{even}}(X \otimes_{\THH(k)} k), \quad
\rank_{k[\sigma^{\pm 1}]} \pi_{\mathrm{odd}}(X)[1/\sigma]= \dim_k
\pi_{\mathrm{odd}}(X \otimes_{\THH(k)} k). 
\]
Note that $X \otimes_{\THH(k)} k \simeq X/\sigma$. 
It follows (e.g., using
Proposition~\ref{perfectmod}) that the ranks
(over $\sigma = 0$ and $\sigma$ invertible, respectively) agree if and only if
$X$ is (graded) free as a $\THH(k)$-module spectrum. 
\end{proof}

It thus follows that, in order to verify degeneration of the Hodge-to-de
Rham spectral sequence,  one needs criteria for testing $\sigma$-torsion-freeness in
$\THH_*(\mathcal{C})$. We begin by observing that liftability to the sphere
allows for a direct argument here.
The general idea that liftability to the sphere should simplify the argument was
well-known, and we are grateful to N. Rozenblyum for indicating it to us. 

\begin{example} 
Suppose\footnote{Using the spectral version of the Witt vectors construction,
one can replace $k$ with any perfect field of characteristic $p$. } $k = \mathbb{F}_p$ and 
suppose $\mathcal{C}$ lifts to a stable $\infty$-category
$\widetilde{\mathcal{C}}$
over the sphere $S^0$ (implicitly $p$-completed). 
Note that the map $S^0 \to \THH(\mathbb{F}_p)$ factors through the natural map
$\mathbb{F}_p \to \THH(\mathbb{F}_p)$ given by choosing a basepoint in the
circle $S^1$ via the equivalence $\THH(\mathbb{F}_p) \simeq S^1 \otimes
\mathbb{F}_p$ in $\mathbf{E}_\infty$-rings \cite{MSV}. 
Then, as $\THH(\mathbb{F}_p)$-module spectra, one has an equivalence 
$$\THH(\mathcal{C}) \simeq \THH(\widetilde{\mathcal{C}}) \otimes_{S^0}
\THH(\mathbb{F}_p)
\simeq (\THH(\widetilde{\mathcal{C}}) \otimes_{S^0} \mathbb{F}_p)
\otimes_{\mathbb{F}_p} \THH(\mathbb{F}_p).
$$
Since every $\mathbb{F}_p$-module spectrum is (graded) free, 
this equivalence proves that 
$\THH(\mathcal{C})$ is free as
an $\THH(\mathbb{F}_p)$-module. Thus, degeneration holds for $\mathcal{C}$.
\end{example}

We will now give the argument for a lifting to $W_2(k)$. 
If a $k$-linear stable $\infty$-category $\mathcal{C}$ lifts to $W_2(k)$, then
the $\THH(k)$-module spectrum $\THH(\mathcal{C})$ lifts to $\THH(W_2(k))$. 
By considering the map $\THH(W_2(k)) \to \THH(k)$, we will be able to deduce
$\sigma$-torsion-freeness (and thus degeneration) in many cases. 
The argument will
require a small amount of additional bookkeeping and rely on an amplitude
assumption. 
The basic input is the following fact about the homotopy ring of $\THH(W_2(k))$. 
The entire computation is carried out in \cite{Brun}, at least additively, but we will only
need it in low degrees. 
For the reader's convenience, we include a proof. 
\begin{proposition}[Compare \cite{Brun}] 
Let $k$ be a perfect field of characteristic $p$. 
\label{THHw2}
\begin{enumerate}
\item  We have
\[ \pi_* \tau_{\leq 2p-2} \THH(W_2(k)) \simeq  W_2(k) [ u]/ u^p, \quad
|u| = 2.  \]
\item 
The map $\THH_i(W_2(k)) \to \THH_i(k)$ is zero for $0 < i \leq 2p-2$. 
Furthermore, the map of $\mathbf{E}_\infty$-rings $\THH(W_2(k)) \to \THH(k) \to \tau_{\leq 2p-2} \THH(k)$
factors through the map $k \to \THH(k) \to \tau_{\leq 2p-2} \THH(k)$. 
\end{enumerate}
\end{proposition}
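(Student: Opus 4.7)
My plan is to compute $\tau_{\leq 2p-2}\THH(W_2(k))$ by reducing to derived Hochschild homology over $\mathbb{Z}$ and applying the HKR filtration, then to deduce part (2) by tracking the resulting degree-2 generator. The first step is the reduction: by B\"okstedt's computation, after $p$-localization $\pi_i\THH(\mathbb{Z})$ vanishes for $0 < i < 2p-1$, so the unit $\THH(\mathbb{Z})_{(p)} \to \mathbb{Z}_{(p)}$ is a $(2p-1)$-equivalence. Since $W_2(k)$ is $p$-torsion, a standard base-change argument then gives
\[ \tau_{\leq 2p-2}\THH(W_2(k)) \simeq \tau_{\leq 2p-2}\HH(W_2(k)/\mathbb{Z}). \]
Next I identify the cotangent complex. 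Since $k$ is perfect, $L_{k/\mathbb{F}_p} \simeq 0$, and derived Nakayama (applied to the local ring $W_2(k)$ with nilpotent maximal ideal $(p)$ and residue field $k$) lifts this to formal étaleness $L_{W_2(k)/(\mathbb{Z}/p^2)} \simeq 0$. Combining with $L_{(\mathbb{Z}/p^2)/\mathbb{Z}} \simeq (\mathbb{Z}/p^2)[1]$ via the transitivity triangle gives $L_{W_2(k)/\mathbb{Z}} \simeq W_2(k)[1]$, with $\pi_1$ generated by the class of $d(p^2)$.

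Applying the HKR filtration on $\HH(W_2(k)/\mathbb{Z})$, the $i$-th graded piece is $(\wedge^i_{W_2(k)} L_{W_2(k)/\mathbb{Z}})[i] \simeq \Gamma^i_{W_2(k)}(W_2(k))[2i] \simeq W_2(k)[2i]$, using the identification of derived exterior powers of shifted modules with divided powers. In the range $[0, 2p-2]$ these pieces lie in distinct even homological degrees, so the spectral sequence collapses for degree reasons and the filtration splits additively. The associated graded ring is the divided power algebra $W_2(k)\langle u \rangle$ with $|u| = 2$, but since $i!$ is a unit in $W_2(k)$ for $0 < i < p$, we have $\gamma_i(u) = u^i/i!$ and the subalgebra generated by $u$ already exhausts the truncation. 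Hence the ring structure in our range is the truncated polynomial ring $W_2(k)[u]/u^p$, establishing part (1).

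For part (2), I track the map on $\pi_2$ induced by $W_2(k) \to k$. Via HKR this is the base-change map $L_{W_2(k)/\mathbb{Z}} \otimes_{W_2(k)} k \to L_{k/\mathbb{Z}}$, which sends $d(p^2) = 2p\, dp$ to zero in $k[1]$ since $p = 0$ in $k$. Thus $u$ maps to zero, and by multiplicativity every positive-degree element of $W_2(k)[u]/u^p$ maps to zero in $\tau_{\leq 2p-2}\THH(k) = k[\sigma]/\sigma^p$. Since the target is augmented over $k$ with augmentation killing $\sigma$ and our map lies in the kernel of this augmentation, the $\mathbf{E}_\infty$-ring factorization through $k$ follows.

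The main technical obstacle is verifying the multiplicative identification in the HKR filtration --- while additivity is forced by degree considerations, converting the divided power structure on the associated graded to a truncated polynomial ring requires combining the explicit form of $L_{W_2(k)/\mathbb{Z}}$ with the invertibility of $i!$ for $i < p$. A secondary subtlety is establishing $L_{W_2(k)/(\mathbb{Z}/p^2)} \simeq 0$ for general (possibly non-algebraic) perfect $k$, which requires derived Nakayama with the nilpotent ideal $(p) \subset W_2(k)$ to transfer the vanishing of $L_{k/\mathbb{F}_p}$.
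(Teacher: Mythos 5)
Your reduction to $\HH(W_2(k)/\mathbb{Z})$ via the connectivity of $\THH(\mathbb{Z})_{(p)} \to \mathbb{Z}_{(p)}$, your computation $L_{W_2(k)/\mathbb{Z}} \simeq W_2(k)[1]$ (via derived Nakayama and transitivity rather than the paper's direct use of $L_{(A/a)/A}\simeq (a)/(a^2)[1]$ over $W(k)$), the identification of the homotopy ring as a divided power algebra that becomes $W_2(k)[u]/u^p$ after truncation because $i!$ is invertible for $i<p$, and the vanishing of the map on $\pi_2$ because $[p^2]\mapsto 0$ in $(p)/(p^2)$ --- all of this is correct and is essentially the paper's argument, with HKR playing the role of the paper's ``free simplicial commutative ring on a degree-two class'' description. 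The additive half of part (2) then follows from multiplicativity exactly as you say.

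The gap is in the last sentence, the $\mathbf{E}_\infty$-factorization. Knowing that the induced map on homotopy groups kills everything in positive degrees (equivalently, that the composite with the augmentation $\tau_{\leq 2p-2}\THH(k)\to k$ agrees on $\pi_*$ with the map to $k$) does \emph{not} produce a factorization of the map of $\mathbf{E}_\infty$-rings through $k$: a factorization is extra coherent data, and a ring map can be trivial on positive homotopy without factoring through $\pi_0$ in a structured way. This structured statement is precisely what is used later (in the proof of Proposition~\ref{freecrit}, to base-change $\tau_{\leq 2p-2}\widetilde{M}$ along a map that goes through $k$ and conclude freeness), so it cannot be waved away. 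The paper's argument is by a universal property: $\tau_{\leq 2p-2}\THH(W_2(k))$ is the $(2p-2)$-truncation of the \emph{free} $\mathbf{E}_\infty$-$W_2(k)$-algebra on a degree-two class $u$ (in the range $\leq 2p-2$ the symmetric powers $(\Sigma^2 W_2(k))^{\otimes i}_{h\Sigma_i}$ contribute only $\Sigma^{2i}W_2(k)$ for $i<p$, since $|\Sigma_i|$ is prime to $p$ and the $i\geq p$ terms start in degree $2p$), so the space of $\mathbf{E}_\infty$-maps into a $(2p-2)$-truncated target is controlled by the image of $u$ in $\pi_2$; since $u\mapsto 0$, the map is homotopic to one factoring through $W_2(k)$, and hence through $k$ because $\tau_{\leq 2p-2}\THH(k)$ is a $k$-algebra. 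You have all the computational input for this (you identified the generator and showed it dies), but you need to supply the freeness/universal-property step rather than appeal to the augmentation.
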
 
\begin{proof} 
We compare with Hochschild homology over the integers. 
The map $S^0_{(p)} \to \mathbb{Z}_{(p)}$ induces an equivalence on degrees $<
2p-3$. 
Thus, in the range stated in the theorem, we can compare $\THH$  with Hochschild homology over
$\mathbb{Z}_{(p)}$ or over $W(k)$. 
We have
\[ \HH_*( W_2(k)/\mathbb{Z}_{(p)}) \simeq \Gamma^*_{W_2(k)}[u], \quad |u| = 2,  \]
i.e., the divided power algebra on a class in degree $2$. Indeed, 
we have that 
\[ \HH(W_2(k)/\mathbb{Z}_{(p)}) =
\HH(k/W_2(k)) = W_2(k) \otimes^L_{W_2(k) \otimes^L_{W(k)} W_2 (k)} W_2(k) 
.\]
Since $W_2(k) \otimes^L_{W(k)} W_2(k)$ is the free simplicial commutative ring
over $W_2(k)$ on a class
in degree $1$ (equivalently, a square-zero extension on a class in degree one), 
the Hochschild
homology of $W_2(k)$ is the free simplicial commutative ring over $W_2(k)$ on a class in
degree two, which on homotopy yields a divided power algebra. 

It remains to check that the map $\THH(W_2(k)) \to
\THH( k)$ vanishes on $\pi_2$. 
This, too, follows from the 
comparison with Hochschild homology over $\mathbb{Z}$. 
For a map of commutative rings $A \to B$, let $L_{B/A}$ denote the cotangent complex of $B$
over $A$.
Using the classical Quillen spectral sequence from the cotangent complex to
Hochschild homology (cf., e.g., \cite[Prop. IV.4.1]{nikolaus-scholze}), we
find that the only contributions to 
$\pi_2 \THH(W_2(k)) =\pi_2 \HH(W_2(k)/\mathbb{Z}_{(p)})$
(resp. 
$\pi_2 \THH(k) =\pi_2 \HH(k/\mathbb{Z}_{(p)})$) come from the cotangent complex. 
Thus, one has
to show that the following map vanishes:
\begin{equation} \label{Lmap} \pi_1 L_{W_2(k)/\mathbb{Z}_{(p)}} \to \pi_1
L_{k/\mathbb{Z}_{(p)}}.  \end{equation}
Here one can replace the source $\mathbb{Z}_{(p)}$ with $W(k)$ 
since $k$ is perfect. 
Recall also that if $A$ is a ring and $a \in A$ a regular element, then one
has a natural equivalence $L_{(A/a)
/ A} \simeq (a)/(a^2)[1]$. 
In our setting, one obtains for \eqref{Lmap} the map 
of $W(k)$-modules
\[ (p^2)/(p^4) \to (p)/(p^2),   \]
which is zero. 
Finally, the factorization of the map of $\mathbf{E}_\infty$-rings follows because
$\tau_{\leq 2p-2} \THH(W_2(k))$ is the truncation of the free
$\mathbf{E}_\infty$-ring over $W_2(k)$ on a class in degree two. 
\end{proof} 

We now give an argument that liftability together with a
$\mathrm{Tor}$-amplitude condition implies freeness. The observation is that if
the $\mathrm{Tor}$-amplitude is small, then any torsion has to occur in low
homotopical degree. 
\begin{proposition} 
Let $M$ be a perfect $\THH(k)$-module such that $\pi_i(M) = 0$ for $i < a$.  
Suppose that $M $ lifts to a perfect module over $\THH(W_2(k))$. 
Then multiplication by $\sigma \colon \pi_{i-2}(M) \to \pi_i(M)$ is injective 
for $i \leq a + 2p-2$. 
\label{freecrit}
\end{proposition}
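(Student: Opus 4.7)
The plan is to exploit the factorization $\THH(W_2(k)) \to k \to \tau_{\leq 2p-2}\THH(k)$ supplied by Proposition~\ref{THHw2}(2) in order to identify $\tau_{\leq 2p-2}M$ with a ``free'' module over $\tau_{\leq 2p-2}\THH(k)$, on which $\sigma$-multiplication is manifestly injective in the required range. First, by shifting both $M$ and the given lift $\widetilde{M}$ over $\THH(W_2(k))$ by $-a$, I reduce to the case $a = 0$, so that the goal becomes injectivity of $\sigma\colon \pi_{i-2}(M) \to \pi_i(M)$ for $i \leq 2p-2$.

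Using the $\mathbf{E}_\infty$-ring map $\THH(W_2(k)) \to k$ from Proposition~\ref{THHw2}(2), I set $M_0 := \widetilde{M} \otimes_{\THH(W_2(k))} k$; this is a perfect $k$-module, hence splits as $M_0 \simeq \bigoplus_\alpha k[n_\alpha]$. The two $\mathbf{E}_\infty$-ring maps $\THH(W_2(k)) \to k$ (the one from Proposition~\ref{THHw2}, and the composition $\THH(W_2(k)) \to \THH(k) \to \THH(k)/\sigma \simeq k$) agree, since any $\mathbf{E}_\infty$-ring map from a connective ring to $k$ is classified by its effect on $\pi_0$, and in both cases this effect is the mod-$p$ surjection $W_2(k) \to k$. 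Hence $M_0 \simeq M \otimes_{\THH(k)} k$, and the Tor-amplitude hypothesis gives $n_\alpha \geq 0$ for all $\alpha$. The crucial identification is then
\[
\tau_{\leq 2p-2} M \;\simeq\; \tau_{\leq 2p-2}\bigl(M_0 \otimes_k \THH(k)\bigr) \;\simeq\; \bigoplus_\alpha \tau_{\leq 2p-2}\bigl(\THH(k)[n_\alpha]\bigr),
\]
which uses that $\tau_{\leq 2p-2}\THH(k)$, as a $\THH(W_2(k))$-algebra, factors through $k$ (again Proposition~\ref{THHw2}), together with a connectivity argument to commute $\tau_{\leq 2p-2}$ with the relative tensor product.

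To conclude, since $\pi_* \tau_{\leq 2p-2}\THH(k) \simeq k[\sigma]/\sigma^p$ with $|\sigma| = 2$, multiplication by $\sigma\colon \pi_{j-2} \to \pi_j$ of $\tau_{\leq 2p-2}\THH(k)$ is an isomorphism (in particular injective) for $2 \leq j \leq 2p - 2$. Applying this to each summand $\tau_{\leq 2p-2}\THH(k)[n_\alpha]$ with $n_\alpha \geq 0$ and summing gives injectivity of $\sigma\colon \pi_{i-2}(M) \to \pi_i(M)$ for $i \leq 2p-2$, as required. The main technical obstacle is establishing the key identification above: one must verify that the two $\THH(W_2(k))$-algebra structures on $\tau_{\leq 2p-2}\THH(k)$ coincide and that this agreement survives the passage to the truncated relative tensor product.
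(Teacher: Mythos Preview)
Your proof is correct and follows essentially the same route as the paper: both reduce to $a=0$, invoke the factorization $\THH(W_2(k)) \to k \to \tau_{\leq 2p-2}\THH(k)$ from Proposition~\ref{THHw2}, and use it (together with connectivity of the lift, which you extract via $M_0 \simeq M/\sigma$) to identify $M$ in degrees $\leq 2p-2$ with a module free over $\tau_{\leq 2p-2}\THH(k)$ on generators in nonnegative degrees, whence $\sigma$-injectivity. Your version spells out a bit more explicitly why the two maps $\THH(W_2(k)) \to k$ agree and how the generators $k[n_\alpha]$ arise, but the argument is the same.
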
 
\begin{proof} 
Without loss of generality, $a = 0$. 
By assumption, 
$M \simeq \widetilde{M} \otimes_{\THH(W_2(k))} \THH(k)$ for some
connective and perfect $\THH(W_2(k))$-module $\widetilde{M}$. 
Truncating, we find that there is a map 
of $\THH(k)$-modules
\begin{equation} \label{eqrange}  M  \to   
\tau_{\leq 2p-2} \widetilde{M} \otimes_{ \tau_{\leq 2p-2} \THH(W_2(k))}
\tau_{\leq 2p-2} \THH(k) 
, 
  \end{equation}
  which induces an isomorphism on degrees $\leq 2p-2$. 
However, by Proposition~\ref{THHw2} and the fact that any $k$-module spectrum
is free,  it follows that the right-hand-side is a free
module over $\tau_{\leq 2p-2} \THH(k)$ on generators in nonnegative degrees. 
This shows that multiplication by $\sigma$ is an injection in this range of
degrees. 
\end{proof}

\begin{proposition} 
\label{dualextend}
Let $M$ be a perfect $\THH(k)$-module with $\mathrm{Tor}$-amplitude
concentrated in $[-p, p]$. Suppose that $M$ lifts to a perfect module over
$\THH(W_2(k))$. Then $M$ is free. 
\end{proposition}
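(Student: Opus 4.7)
The plan is to combine the classification of perfect $\THH(k)$-modules in \Cref{perfectmod} with \Cref{freecrit}, applied both to $M$ and to its $\THH(k)$-linear dual. By \Cref{perfectmod} I may write $M$ as a sum of shifts of $\THH(k)$ and of summands of the form $N = \Sigma^j \THH(k)/\sigma^n$ with $n \ge 1$, and freeness is precisely the statement that no $N$ of the latter kind occurs. Each such $N$ has a nonzero $\sigma$-kernel at the class $\sigma^{n-1}$ in degree $j + 2n - 2$, and a direct computation from the defining cofiber sequence gives $N \otimes_{\THH(k)} k \simeq \Sigma^j k \oplus \Sigma^{j + 2n + 1} k$. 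So the Tor-amplitude hypothesis on $M$ forces $-p \le j$ and $j + 2n \le p - 1$ for any torsion summand that appears.

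Next I would apply \Cref{freecrit} to $M$ itself with $a = -p$: it says $\sigma\colon \pi_{i-2}M \to \pi_iM$ is injective for $i \le p - 2$, so the kernel contributed by $N$ is only permitted when $j + 2n \ge p - 1$. Combined with the amplitude bound this pins $j + 2n = p - 1$ exactly, which is not yet a contradiction. To close this one-degree gap I would use $\THH(W_2(k))$-linear duality: if $\widetilde M$ is a perfect lift of $M$ to $\THH(W_2(k))$, then its $\THH(W_2(k))$-linear dual $\widetilde M^\vee$ is again perfect, base changes to $M^\vee$, and the identification $M^\vee \otimes_{\THH(k)} k \simeq (M \otimes_{\THH(k)} k)^\vee$ of $k$-modules shows $M^\vee$ also has Tor-amplitude in $[-p, p]$.

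Dualizing the cofiber sequence defining $\THH(k)/\sigma^n$ gives $N^\vee \simeq \Sigma^{-j - 2n - 1} \THH(k)/\sigma^n$, so every torsion summand of $M$ produces a summand of $M^\vee$ of the same type with shift $-j - 2n - 1$. Running \Cref{freecrit} on $M^\vee$, the corresponding kernel sits in target degree $-j - 1$, and injectivity in that range forces $-j - 1 \ge p - 1$, i.e.\ $j \le -p$. Combined with $j \ge -p$ and $j + 2n = p - 1$ from the first pass, this leaves only $j = -p$ and $2n = 2p - 1$, which is impossible since $2n$ is even. Hence no torsion summand can occur and $M$ is free. The main obstacle, as foreshadowed, is exactly this one-degree gap: \Cref{freecrit} applied only to $M$ is a hair too weak, and the essential extra input is the $\THH(W_2(k))$-linear self-duality that transfers the same criterion to $M^\vee$.
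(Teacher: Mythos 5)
Your argument is correct and is essentially the paper's own proof: the same decomposition via Proposition~\ref{perfectmod}, the same application of Proposition~\ref{freecrit} to pin each torsion summand $\Sigma^{j}\THH(k)/\sigma^{n}$ at $j+2n+1=p$, and the same passage to the $\THH(k)$-linear dual (which inherits the lift and the amplitude bound) to force $j=-p$ and hence the parity contradiction $2n=2p-1$. The only difference is notational.
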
 
\begin{proof} 
$M$
is a direct sum of $\THH(k)$-modules
each of which is either free or equivalent to 
$M_{i,j}= \Sigma^{i} \THH(k)/\sigma^j$ for $-p \leq i \leq i + 2j + 1 \leq 
p$ as $M_{i,j}$ has $\mathrm{Tor}$-amplitude in $[i, i+2j+1]$. 
Note that $M_{i,j}$ has an element in $\pi_{i + 2j-2}$ annihilated by $\sigma$,
so we find $i + 2j -2 \geq p-3$ and therefore $i + 2j+1 \geq p$
by Proposition~\ref{freecrit}. 
Therefore, $i + 2j+1 = p$. 

In particular,
we find  that if $M_{i,j}$ occurs as a summand, then $i + 2j + 1 = p$. 
We observe now that if the hypotheses of the lemma apply to $M$, then they
apply to the $\THH(k)$-linear Spanier-Whitehead dual $\mathbb{D}M$: that is,
$\mathbb{D} M$ is a perfect $\THH(k)$-module with $\mathrm{Tor}$-amplitude
concentrated in $[-p, p]$, and such that $\mathbb{D}M $ lifts to a perfect
module over $\THH(W_2(k))$. 
If $M_{i,j}$ occurs as a summand of $M$, then 
its dual, which 
 is given by $\Sigma^{-i - 2j-1} \THH(k)/\sigma^j$, 
occurs as a summand of $\mathbb{D} M$. 
Applying the previous paragraph to $\mathbb{D} M$, it follows also that $-i =
p$. Adding the two equalities, we find that $2j + 1 =
2p$, which is an evident contradiction. 
\end{proof} 

Finally, we can state our general degeneration criterion in characteristic $p$,
which will easily imply 
Theorem~\ref{mainthm}. 

\begin{proposition}[General formality criterion, characteristic $p$] 
\label{generaldegcrit}
Let $k$ be a perfect field of characteristic $p>0$. 
Let $M \in \perf(k)^{BS^1}$ be a perfect $k$-module with $S^1$-action
whose amplitude is contained in $[-p, p]$.
Suppose that there exists a dualizable object $M'  \in \md_{\THH(k)} (\CycSp)$ such
that, as objects in $\perf(k)^{BS^1}$, we have an equivalence $M \simeq M'
\otimes_{\THH(k)} k$. Suppose furthermore that the underlying $\THH(k)$-module
of $M'$ lifts to a perfect module over $\THH(W_2(k))$. Then $M$
is formal. \end{proposition}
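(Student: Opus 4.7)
The plan is to view this proposition as a straightforward assembly of the two main technical results proved earlier in the section: Proposition~\ref{torsionfreedeg}, which converts formality into $\sigma$-torsion-freeness, and Proposition~\ref{dualextend}, which extracts freeness from a lift together with a Tor-amplitude bound. The only new ingredient is the observation that the amplitude hypothesis on $M$ automatically upgrades to a Tor-amplitude hypothesis on the underlying $\THH(k)$-module of $M'$.

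First I would translate the amplitude bound. Using $k \simeq \THH(k)/\sigma$, identify $M$ with $M'/\sigma$. By the classification in Proposition~\ref{perfectmod}, the underlying $\THH(k)$-module of $M'$ decomposes as a sum of shifted copies of $\THH(k)$ and of $\THH(k)/\sigma^n$. A short cofiber-sequence calculation, using that $\sigma$ acts by zero on $k$ and so the defining cofiber sequence splits, yields
\[
  \Sigma^i \THH(k)/\sigma^n \otimes_{\THH(k)} k \simeq \Sigma^i k \oplus \Sigma^{i + 2n + 1} k.
\]
Consequently, the amplitude bound $[-p,p]$ on $M$ forces each shift index $i$ and each corresponding $i + 2n + 1$ appearing in the decomposition of $M'$ to lie in $[-p, p]$, which is exactly the Tor-amplitude bound required as input to Proposition~\ref{dualextend}.

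Next, with the Tor-amplitude of $M'$ now established to lie in $[-p, p]$ and the given lift to a perfect $\THH(W_2(k))$-module in hand, Proposition~\ref{dualextend} applies directly and tells us that $M'$ is free as a $\THH(k)$-module. Since $M'$ is dualizable as an object of $\md_{\THH(k)}(\CycSp)$ by hypothesis, Proposition~\ref{torsionfreedeg}(1) then gives formality of $M \simeq M' \otimes_{\THH(k)} k$ in $\perf(k)^{BS^1}$, which is what was to be shown.

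I do not expect a substantive obstacle. All of the structural input — the Brun-type calculation and the cotangent-complex argument behind Proposition~\ref{dualextend}, and the noncommutative Cartier isomorphism of Proposition~\ref{noncommCart} used in the proof of Proposition~\ref{torsionfreedeg} — is already in place. The remaining task is the elementary amplitude bookkeeping carried out in the first step, which is immediate from the classification of perfect $\THH(k)$-modules.
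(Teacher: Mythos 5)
Your proof is correct and is essentially the paper's own argument, which simply combines Propositions~\ref{torsionfreedeg} and \ref{dualextend}; the only thing you add is the (correct) bookkeeping identifying the amplitude of $M \simeq M'\otimes_{\THH(k)}k$ with the $\mathrm{Tor}$-amplitude of the underlying $\THH(k)$-module of $M'$, which the paper leaves implicit.
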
 
\begin{proof} 
Combine Propositions~\ref{torsionfreedeg} and \ref{dualextend}. 
\end{proof}

\begin{proof}[Proof of Theorem~\ref{mainthm}]
Let $\mathcal{C}$ be a smooth and proper stable $\infty$-category over $k$ satisfying the
assumptions of the theorem.
By assumption, there exists a smooth and proper lift $\widetilde{\mathcal{C}}$ over 
$W_2(k)$ such that $\mathcal{C} \simeq \widetilde{\mathcal{C}}\otimes_{W_2(k)}
k$. Therefore, one has
an equivalence of $\THH(k)$-modules
\[ \THH(\mathcal{C}) \simeq \THH(\widetilde{\mathcal{C}}) \otimes_{\THH(W_2(k))}
\THH(k).  \]
Furthermore, $\THH(\widetilde{\mathcal{C}})$ is a perfect
$\THH(W_2(k))$-module. Now, one can apply Proposition~\ref{generaldegcrit} with $M' =
\THH(\mathcal{C})$. 
\end{proof}

\begin{remark} 

The slight extension of the dimension range via duality goes back to the work of
Deligne-Illusie \cite{DI} and appears in the recent work of Antieau-Vezzosi \cite{AV}
on HKR isomorphisms in characteristic $p$. 
Note also that for a smooth and proper $k$-linear $\infty$-category
$\mathcal{C}$, the Hochschild homology $\HH(\mathcal{C}/k)$ is always
self-dual, cf. 
\cite{SHK}. Hence, it is no loss of generality to assume that the interval in
which the amplitude of Hochschild homology is concentrated is symmetric about
the origin. 
\end{remark} 

We now describe the deduction of Theorem~\ref{char0thm} from
Theorem~\ref{mainthm}, as in \cite{K1, K2}. 
We note that this is a standard argument and is also used in the commutative
case \cite{DI}. We formulate the approach in the following formality
criterion.

\begin{theorem}[General formality criterion, field case] 
\label{char0degcrit}
Let $K$ be a field of characteristic zero. 
Let $M \in \perf(K)^{BS^1}$ be a perfect module equipped with an
$S^1$-action. Suppose that there exists a finitely generated subring $R \subset
K$ and a dualizable object $M' \in \mod_{\THH(R)}( \CycSp)$
such that we have an  equivalence in $\perf(K)^{BS^1}$, 
\( M' \otimes_{\THH(R)} K \simeq M.  \)
Then $M$ is formal. \end{theorem}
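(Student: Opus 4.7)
The approach is a spreading-out argument: we reduce formality over $K$ to formality over residue fields at closed points of $\mathrm{Spec}(R)$ of large residual characteristic, where Proposition~\ref{generaldegcrit} applies, and then transfer the numerical criterion \eqref{hheq} back to $K$ via flatness.

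First, since $R \subset K$ is a finitely generated $\mathbb{Z}$-subring of a characteristic zero field, $R$ is a finitely generated domain of characteristic zero. I would replace $R$ by a localization $R[1/f] \subset K$ (which preserves all hypotheses of the theorem) to arrange three conditions by combining generic smoothness over $\mathbb{Z}$ with generic freeness of finitely generated modules over the noetherian domain $R$: (a) $R$ is smooth over $\mathbb{Z}$; (b) the homotopy groups of $N := M' \otimes_{\THH(R)} R$ are finite free $R$-modules in each degree; and (c) the homotopy groups of the Tate construction $N^{tS^1}$ are finite free $R$-modules in each degree. Here $N$ is perfect as an $R$-module with $S^1$-action because $M'$ is dualizable over $\THH(R)$, and $N^{tS^1}$ is correspondingly perfect over the Tate-periodic ring $R^{tS^1}$ (constructed with the trivial $S^1$-action on $R$).

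Second, closed points of $\mathrm{Spec}(R)$ have finite residue fields by the Nullstellensatz for finitely generated $\mathbb{Z}$-algebras, and since $R$ has characteristic zero their residual characteristics are unbounded. Choose a maximal ideal $\mathfrak{m} \subset R$ with $k := R/\mathfrak{m}$ of characteristic $p$ exceeding the amplitude of $N$. Smoothness of $R$ over $\mathbb{Z}$ together with Cohen structure theory gives $\hat{R}_\mathfrak{m} \simeq W(k)[[x_1, \dots, x_n]]$, so composing $R \to \hat{R}_\mathfrak{m}$ with the augmentation $x_i \mapsto 0$ and the reduction $W(k) \twoheadrightarrow W_2(k)$ produces a ring map $R \to W_2(k)$ lifting $R \twoheadrightarrow k$. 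Base-changing $M'$ along $\THH(R) \to \THH(W_2(k))$ yields a perfect $\THH(W_2(k))$-module whose reduction modulo $p$ is the underlying $\THH(k)$-module of $M'_k := M' \otimes_{\THH(R)} \THH(k) \in \md_{\THH(k)}(\CycSp)$. Since $\overline{M}_k := N \otimes_R k \simeq M'_k \otimes_{\THH(k)} k$ has amplitude contained in $[-p, p]$, Proposition~\ref{generaldegcrit} shows that $\overline{M}_k$ is formal over $k$.

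Finally, I would propagate formality to $K$ by dimension comparison. The $R$-freeness of $\pi_*(N)$ gives $\dim_K \pi_i(M) = \rank_R \pi_i(N) = \dim_k \pi_i(\overline{M}_k)$ for every $i$. For the Tate side, dualizability of $N$ in $\md_R(\Sp^{BS^1})$ yields a base-change formula $(N \otimes_R A)^{tS^1} \simeq N^{tS^1} \otimes_{R^{tS^1}} A^{tS^1}$ for any $R$-algebra $A$ equipped with the trivial $S^1$-action. Applied for $A = K$ and $A = k$ together with the arranged freeness of $\pi_*(N^{tS^1})$, this gives $\dim_K \pi_i(M^{tS^1}) = \dim_k \pi_i(\overline{M}_k^{tS^1})$. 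The numerical equalities \eqref{hheq} defining formality of $\overline{M}_k$ therefore transfer to $M$. The main obstacle is this final transfer step: the Tate construction does not commute with arbitrary base change, and the formula above ultimately relies on the dualizability of $M'$ (rather than merely perfectness of its underlying $\THH(R)$-module).
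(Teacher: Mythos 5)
Your overall strategy -- spread out over a smooth finitely generated $\mathbb{Z}$-algebra, specialize to a closed point of large residue characteristic, lift along $W_2(k)$ using formal smoothness, and invoke Proposition~\ref{generaldegcrit} -- is exactly the paper's. The gap is in your final transfer step. You assert the base-change formula $(N \otimes_R A)^{tS^1} \simeq N^{tS^1} \otimes_{R^{tS^1}} A^{tS^1}$ and attribute it to dualizability (of $N$ in $\md_R(\Sp^{BS^1})$, or of $M'$ in $\md_{\THH(R)}(\CycSp)$). Neither is the right reason: a dualizable object of $\md_R(\Sp^{BS^1})$ is just a perfect $R$-module with an $S^1$-action, and the Tate construction does not commute with base change for such objects in general; the property you actually need is that $N$ lie in the thick subcategory generated by the unit of $\md_R(\Sp^{BS^1})$ (``perfectness'' in the sense the paper uses throughout Section 4, where the distinction between dualizable and perfect is precisely the point of Propositions~\ref{dualizableperfect} and \ref{perfectperf}). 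Dualizability of $M'$ upstream in cyclotomic spectra contributes nothing extra here, since after base change to $R$ all that survives is perfectness of the underlying $R$-module. The same issue already infects your step (c): arranging freeness of $\pi_*(N^{tS^1})$ by localizing $R$ tacitly uses compatibility of $(-)^{tS^1}$ with the localization $R \to R[1/f]$.

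The gap is repairable: since you have arranged $R$ smooth over $\mathbb{Z}$, hence regular, and $S^1$ is connected, the $S^1$-action on each homotopy group $\pi_i(N)$ is trivial (it factors through $\pi_0 S^1$), and each $\pi_i(N)$ is finitely generated, hence perfect over the regular ring $R$; the equivariant Postnikov filtration then exhibits $N$ as a finite iterated extension of shifts of trivial-action perfect $R$-modules, so $N$ does lie in the thick subcategory generated by the unit and your base-change formula follows (together with the $2$-periodicity argument needed to pass from the formula to the dimension count on homotopy groups). The paper sidesteps all of this with a cheaper device: assuming $M$ is not formal, the first nonzero differential in the Tate spectral sequence for $M' \otimes_{\THH(R)} R$ is a nonzero map of finitely generated free $R$-modules, and since $R$ is a Jacobson domain one can choose the maximal ideal $\mathfrak{m}$ so that this differential stays nonzero over $R/\mathfrak{m}$, contradicting formality at the closed point. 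That version requires only naturality of the spectral sequence, not any base-change statement for $(-)^{tS^1}$, at the cost of having to choose the closed point adapted to the differential.
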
 
\begin{proof} 
Any finitely generated field extension of $\mathbb{Q}$ is a filtered
colimit of smooth $\mathbb{Z}$-algebras. 
Therefore, $K$ is a filtered colimit of its finitely generated subalgebras
which are smooth over $\mathbb{Z}$.
Enlarging $R$, we can assume that $R$ is smooth over $\mathbb{Z}$. 
Enlarging $R$ further, we can assume that the homology groups of $M' \otimes_{\THH(R)}
R $ (which is a perfect $R$-module spectrum) are finitely generated free $R$-modules and
vanish for $i \notin [-p, p]$, for every prime $p$ which is noninvertible in $R$. 

Suppose that the $S^1$-action on $M$ is nontrivial. Therefore, the $S^1$-action
on $M' \otimes_{\THH(R)} R$  is nontrivial too, and there exists a nontrivial differential in the
Tate spectral sequence for $(M' \otimes_{\THH(R)} R)^{tS^1}$. 
Then we can find a maximal ideal $\mathfrak{m} \subset R$
such that the first differential (which is a map of finitely generated
free
$R$-modules) remains nontrivial after base-change along $R \to
R/\mathfrak{m}$ and thus after base-change along $R \to k
\stackrel{\mathrm{def}}{=}
\overline{R/\mathfrak{m}}$ (i.e., the algebraic closure of the residue field).

Let $M'_k = M' \otimes_{\THH(R)} \THH(k) \in \md_{\THH(k)}(\CycSp)$, which is
a dualizable object. 
Note that  $k$ is a perfect field of
characteristic $p > 0$.
Moreover, the map $R \to k$ lifts to the length
two Witt vectors because $R$ is smooth over $\mathbb{Z}$. 
It follows that the underlying $\THH(k)$-module of $M'_k$
lifts to a perfect $\THH(W_2(k))$-module. 
It follows that $M' \otimes_{ \THH(R)} k \in \perf(k)^{BS^1}$ is formal by  
Proposition~\ref{generaldegcrit}. This contradicts the statement that there is
a nontrivial differential in the Tate spectral 
sequence for $(M' \otimes_{\THH(R)} k)^{tS^1}$ and proves the theorem. 
\end{proof} 

\begin{proof}[Proof of Theorem~\ref{char0thm}] 
Let $\mathcal{C}$  be a smooth and proper stable $\infty$-category over a field $K$ of
characteristic zero. 
By the results of \cite{toen}, there exists a smooth and proper stable
$\infty$-category
$\widetilde{\mathcal{C}}$ over a finitely generated  subalgebra $R
\subset K$  
 such that $\mathcal{C} \simeq \widetilde{\mathcal{C}}
\otimes_R K$. 
Then, one has the dualizable object $\THH(\widetilde{\mathcal{C}}) \in \md_{\THH(R)}(\CycSp)$ and by
base-change, one has  an equivalence in $\perf(K)^{BS^1}$
$\THH(\widetilde{\mathcal{C}}) \otimes_{\THH(R)} K \simeq \HH(\mathcal{C}/K)$. 
Now apply Theorem~\ref{char0degcrit}. 
\end{proof} 

We note that the above arguments actually enable a slight strengthening of 
Theorem~\ref{mainthm}. For example, Theorem~\ref{char0degcrit} easily implies
that if $F \colon \mathcal{C} \to \mathcal{D}$ is a functor of smooth and
proper stable $\infty$-categories over $K$, then the $S^1$-action on the
relative Hochschild homology $\mathrm{fib}( \HH(\mathcal{C}/K) \to
\HH(\mathcal{D}/K))$ is also trivial. More generally, this would work for any
appropriately finite diagram. We formulate this as follows. 

\newcommand{\nmot}{\mathcal{N}\mathrm{Mot}}
Let $K$ be a field of characteristic zero and let $ \nmot_K$ denote the
presentably symmetric monoidal $\infty$-category of noncommutative motives of $K$-linear stable
$\infty$-categories introduced by Tabuada \cite{tab} (see also \cite{BGT, HSS}). 
Since Hochschild homology is an additive invariant, one has a symmetric
monoidal, cocontinuous functor 
\[ \HH(\cdot/K) \colon  \nmot_K \to \md_K^{BS^1},  \]
from $\nmot_K$ into the $\infty$-category $\md_K^{BS^1}$ of $K$-module spectra
equipped with an $S^1$-action. 
Let $\nmot_K^\omega \subset \nmot_K$ denote the thick subcategory generated by the smooth and
proper 
stable $\infty$-categories. 
Recall that if $\mathcal{C}, \mathcal{D}$ are smooth and proper $K$-linear stable
$\infty$-categories, then we have associated objects $[\mathcal{C}],
[\mathcal{D} ] \in \nmot_K^\omega$, and the mapping spectrum is given as 
\[ \mathrm{Hom}_{\nmot_K}( [\mathcal{C}], [\mathcal{D}]) \simeq \mathrm{K}( \fun(\mathcal{C},
\mathcal{D})) , \]
i.e., it is the connective algebraic $\mathrm{K}$-theory spectrum of the $\infty$-category of ($K$-linear)
functors $\mathcal{C} \to \mathcal{D}$. 
Note for instance that given a functor $F\colon \mathcal{C} \to \mathcal{D}$,
one can form the fiber of the associated map $[\mathcal{C}] \to
[\mathcal{D}]$ of noncommutative motives, so that relative Hochschild homology
is given by Hochschild homology of an object of $\nmot_K^\omega$. 
\begin{corollary} 
For any $X \in \nmot_K^\omega$, $\HH(X/K)  \in \perf(K)^{BS^1}$ is formal. 
\end{corollary}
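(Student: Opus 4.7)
The plan is to upgrade the argument that proved Theorem~\ref{char0thm} by working with $\THH$ viewed as a functor on noncommutative motives. Since $\THH$ is an additive (in fact localizing) invariant valued in $\md_{\THH(K)}(\CycSp)$, it factors through the universal additive invariant and produces a symmetric monoidal functor $\nmot_K \to \md_{\THH(K)}(\CycSp)$. Because dualizable objects in any presentably symmetric monoidal stable $\infty$-category form a thick subcategory, and because $\THH(\mathcal{C})$ is dualizable in $\md_{\THH(K)}(\CycSp)$ for every smooth and proper $K$-linear $\mathcal{C}$, we deduce that $\THH(X) \in \md_{\THH(K)}(\CycSp)$ is dualizable for every $X \in \nmot_K^\omega$.

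Next I would descend to a finitely generated subring. Writing $X \in \nmot_K^\omega$ as being built from finitely many smooth and proper $K$-linear stable $\infty$-categories by means of the operations defining the thick closure, and using \cite{toen} to descend each such generator individually, one finds a finitely generated subalgebra $R \subset K$ together with an object $\widetilde{X}$ of $\nmot_R^\omega$ whose base change along $R \to K$ recovers $X$. By the same thick-subcategory argument as above, the object $M' \stackrel{\mathrm{def}}{=} \THH(\widetilde{X}) \in \md_{\THH(R)}(\CycSp)$ is dualizable. The base-change identity $\THH(\widetilde{X}) \otimes_{\THH(R)} K \simeq \HH(X/K)$ in $\perf(K)^{BS^1}$ then follows from the symmetric monoidality of $\THH$ together with the equivalence \eqref{basechange} applied in families.

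Having produced the data $(R, M')$ required by Theorem~\ref{char0degcrit}, the formality of $\HH(X/K)$ is immediate from that theorem. I expect the main technical point to be the descent step: verifying that a compact object of the presentable $\infty$-category $\nmot_K$ living in the thick subcategory generated by smooth and proper categories can be spread out to $\nmot_R^\omega$ for some finitely generated $R \subset K$, compatibly with $\THH$. This rests on a standard finite presentation argument combined with Toën's descent \cite{toen} for each of the finitely many smooth and proper generators appearing in a presentation of $X$, after which enlarging $R$ further allows all the finitely many structure maps and idempotents needed to reconstruct $X$ to be realized over $R$ as well.
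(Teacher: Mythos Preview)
Your argument is correct and follows essentially the same route as the paper: descend $X$ to some $\widetilde{X} \in \nmot_R^\omega$ over a finitely generated $R \subset K$, form the dualizable object $\THH(\widetilde{X}) \in \md_{\THH(R)}(\CycSp)$, and invoke Theorem~\ref{char0degcrit}. The paper packages the descent step slightly more cleanly as the single statement that $\nmot_K^\omega \simeq \varinjlim_R \nmot_R^\omega$, and makes explicit that the reason the finitely many structure maps and idempotents descend is that mapping spectra in $\nmot$ are given by algebraic $\mathrm{K}$-theory, which commutes with filtered colimits; your ``standard finite presentation argument'' is exactly this, so there is no substantive difference.
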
 
\begin{proof} 
By the results of \cite{toen}, and the fact that $\mathrm{K}$-theory commutes with
filtered colimits, it follows that 
$\nmot_K^\omega$ is the filtered colimit of the stable $\infty$-categories
$\nmot_R^\omega$ of dualizable noncommutative motives of smooth and proper
$R$-linear $\infty$-categories, as $R$ ranges over the finitely generated
subrings of $K$. 
Thus, there exists $R$ such that
$X$ arises via base-change from a dualizable object $\widetilde{X}$ in the
$\infty$-category $\nmot_R$.  
In this case,  since $\THH$ is an additive invariant of $R$-linear stable
$\infty$-categories into cyclotomic spectra (compare \cite{BMloc, BGT,
ayala-mg-rozenblyum} for treatments), we can similarly form
the dualizable object $\THH(\widetilde{X}) \in \md_{\THH(R)}(\CycSp)$,
which provides a lifting of $\HH(X/K)$. Now we can apply
Theorem~\ref{char0degcrit} as before.  
\end{proof}

\section{Freeness results and degeneration in families}

In this section, we will analyze Hodge-to-de Rham degeneration in families.
In particular, we will give proofs of Theorems~\ref{relativedeg:intro} and
\ref{relcharp}, showing that (under appropriate hypothesis) the relative
Hodge-to-de Rham spectral sequence degenerates and that Hochschild homology
is locally free. In characteristic zero, at least over a smooth base, this result follows from the existence of a connection
\cite{getzler} on periodic cyclic homology together with Theorem~\ref{char0thm}.

Throughout this section, we will need K\"unneth formulas, as in the form
expressed in \cite{AMN}. 
If $(\mathcal{C}, \otimes, \mathbf{1})$ is a symmetric monoidal
stable $\infty$-category with biexact tensor product, 
then an object $X \in \mathcal{C}$ is called \emph{perfect} if it belongs to
the thick subcategory generated by the unit. 
Perfectness is extremely useful to control objects in $\mathcal{C}$ and their
behavior. However, it can be tricky to check directly. 

In \cite{AMN}, the main result is that if $k$ is a perfect field of
characteristic $p > 0$, in the $\infty$-category 
$\md_{\THH(k)}(\Sp^{BS^1})$ of modules over $\THH(k)$ in the $\infty$-category
of spectra with an $S^1$-action, every dualizable object is perfect. 
This in particular implies the K\"unneth theorem for periodic
topological cyclic homology proved by Blumberg-Mandell \cite{blumberg-mandell-tp}. 
In this section, we will 
need variants of this result for non-regular rings in characteristic zero
(Proposition~\ref{dualizableperfect}) and in the perfect (but not necessarily
field) case in characteristic $p$ (Proposition~\ref{perfectperf}). 
This will enable us to control Hochschild homology of stable $\infty$-categories over,
respectively, local Artin rings in characteristic zero and large perfect rings
in characteristic $p$. 

\subsection{Characteristic zero}

In this 
subsection, we explain the deduction of Theorem~\ref{relativedeg:intro}, that the relative
Hodge-to-de Rham spectral sequence degenerates for families of smooth and
proper dg categories in characteristic zero, and that the relative Hochschild homology is locally free. 
We actually prove a result over connective $\mathbf{E}_\infty$-rings and give a
strengthening of the general formality criterion, Theorem~\ref{char0degcrit}. 

The strategy will be to reduce to the local Artinian case, as is standard. We
use the following definition. 
\begin{definition} 
A connective $\mathbf{E}_\infty$-ring $A$ is \emph{local Artinian} if
$\pi_0(A)$ is a local Artinian ring, each homotopy group $\pi_i(A)$ is a
finitely generated $\pi_0(A)$-module, and that $\pi_i(A) =0 $ for $i \gg 0$. 
\end{definition} 

Fix a field $k$ of characteristic zero. 
Let $A$ be a local Artin $\mathbf{E}_\infty$-ring with residue field $k$. 
Note that $A \to k$ admits a section unique up to homotopy by formal smoothness, compare, e.g.,
\cite[Prop. 2.14]{residuefields}, and so we will consider $A$ as an
$\mathbf{E}_\infty$-algebra over $k$. 
Our first goal is to prove K\"unneth formulas for negative and periodic cyclic
homology for smooth and proper stable $\infty$-categories over $A$. 

Following \cite{AMN}, we translate this into  the following statement. 
As in section 2, $\HH(A/k)$ defines a commutative algebra
object in 
the $\infty$-category $\Sp^{BS^1}$ of spectra with an $S^1$-action\footnote{One
could work in the derived $\infty$-category $D(k)$ in this subsection.}
and we can consider the symmetric monoidal $\infty$-category of
modules $\md_{\HH(A/k)} (\Sp^{BS^1})$. 
Given an $A$-linear stable $\infty$-category $\mathcal{C}$,
the Hochschild homology
$\HH(\mathcal{C}/k)$ defines 
an object in 
$\md_{\HH(A/k)} (\Sp^{BS^1})$. 
The homotopy fixed points $\HH(\mathcal{C}/k)^{hS^1}$ are written
$\HC^-(\mathcal{C}/k)$ and called the \emph{negative cyclic homology} of
$\mathcal{C}$ (over $k$). See also \cite{hoyoisfixed} for comparisons with more
classical definitions. 

\begin{proposition} 
\label{dualizableperfect}
Any dualizable object in the symmetric monoidal $\infty$-category
$\md_{\HH(A/k)}( \Sp^{BS^1})$ is perfect. 
\end{proposition}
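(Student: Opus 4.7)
The plan is to mimic, in this characteristic zero Artinian setting, the strategy of \cite{AMN} in characteristic $p$: reduce to the case $A = k$ by devissage along the Artinian filtration, and handle the base case via formality of the circle in characteristic zero.

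First, since $A$ is a local Artin $\mathbf{E}_\infty$-ring with residue field $k$, I would factor the augmentation $A \to k$ as a finite tower of square-zero extensions
\[ A = A_0 \to A_1 \to \dots \to A_n = k, \]
whose successive fibers are shifts of finite-dimensional $k$-vector spaces. Applying the symmetric monoidal functor $\HH(-/k)$ yields a corresponding tower of $\mathbf{E}_\infty$-algebras in $\Sp^{BS^1}$, and a dualizable $X \in \md_{\HH(A/k)}(\Sp^{BS^1})$ determines dualizable base-changed modules at each stage. By a standard deformation-theoretic argument---exploiting that the square-zero fibers are perfect, since $k$ is perfect over each $\HH(A_i/k)$---perfectness descends along the tower: if the base change of $X$ to $\md_{\HH(A_{i+1}/k)}(\Sp^{BS^1})$ is perfect, then so is its base change to $\md_{\HH(A_i/k)}(\Sp^{BS^1})$. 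Iterating reduces the proposition to the case $A = k$.

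Second, for the base case, $\HH(k/k) \simeq k$ with trivial $S^1$-action, and one must prove that every dualizable object of $\md_k(\Sp^{BS^1})$ lies in the thick subcategory generated by the unit $k$. In characteristic zero, the chain Hopf $\mathbf{E}_1$-algebra $C_*(S^1;k)$ is formal and equivalent to its homology $k[y]/(y^2)$ with $|y|=1$; this induces a symmetric monoidal equivalence
\[ \md_k(\Sp^{BS^1}) \simeq \md_{k[y]/(y^2)} \]
(with the Hopf tensor product on the right) matching units. A dualizable object on the right is a $k[y]/(y^2)$-module whose underlying $k$-module is perfect; since $k[y]/(y^2)$ is a finite-dimensional local $k$-algebra, the classification of its modules exhibits any such object as a finite extension of shifts of $k$ and $k[y]/(y^2)$. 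The short exact sequence $0 \to k \to k[y]/(y^2) \to k \to 0$ places $k[y]/(y^2)$ in the thick closure of the unit $k$, and hence every dualizable object is perfect.

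The central obstacle is the descent of perfectness along the square-zero extensions in the first step, which rests on establishing that the residue field is perfect as a module over $\HH(A_i/k)$ at each stage. This perfectness---essentially a Koszul-type resolution statement---is the principal technical hurdle; it draws on both the Artinian structure of $A$ and the formality of the $S^1$-action available in characteristic zero. A secondary subtlety is verifying that the symmetric monoidal equivalence used in the base case correctly identifies the notions of dualizability and of thick-subcategory-of-the-unit on both sides.
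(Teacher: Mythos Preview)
Your devissage strategy has a genuine gap, and the key claim you flag as ``the principal technical hurdle'' is in fact false as stated. You assert that $k$ is perfect as an object of $\md_{\HH(A_i/k)}(\Sp^{BS^1})$ for each $A_i$ in the tower. But the forgetful functor $\md_{\HH(A_i/k)}(\Sp^{BS^1}) \to \md_{\HH(A_i/k)}(\Sp)$ is exact and carries the unit to the unit, so if $k$ were perfect in the source it would be a perfect $\HH(A_i/k)$-module spectrum. Already for $A_i = k[\epsilon]/\epsilon^2$ this fails: one computes $k \otimes_{\HH(A_i/k)} k \simeq \HH(k \otimes_{A_i} k /k)$, and since $k \otimes_{A_i} k$ has polynomial homotopy in positive degrees, this tensor product is unbounded, so $k$ cannot have finite $\mathrm{Tor}$-amplitude over $\HH(A_i/k)$. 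In particular the square-zero fibers you want to feed into the induction are \emph{not} perfect, and the descent of perfectness along the tower collapses. (Note also that $k$ is not dualizable over $\HH(A_i/k)$ for the same reason, so you cannot bootstrap from the proposition itself.)

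The paper takes a completely different route that avoids any such devissage. It uses the fully faithful embedding $(-)^{hS^1}\colon \md_{\HH(A/k)}(\Sp^{BS^1}) \hookrightarrow \md_{\HC^-(A/k)}$ and the criterion of \cite[Lemma~7.18]{MNN17}: perfectness follows once the lax monoidal structure on $(-)^{hS^1}$ is strict on dualizable objects. This K\"unneth question is checked after reducing modulo the periodicity generator $x$ (where it is automatic) and after inverting $x$ (where it becomes the Tate construction). The crucial input for the Tate piece is the nilinvariance Lemma~\ref{nilinv}, which shows that $M^{tS^1} \simeq (M \otimes_{\HH(A/k)} k)^{tS^1}$ for bounded-below $M$; this factors the Tate functor through base change to $k$, where dualizable objects are already known to be perfect. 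So the reduction to $A = k$ does occur in the paper's argument, but only \emph{after} passing to the Tate construction---precisely because $k$ is not perfect over $\HH(A/k)$ before that passage.
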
 
\begin{proof} 
Let $M \in \md_{\HH(A/k)}(\Sp^{BS^1})$ be a dualizable object. 
We have a lax symmetric monoidal functor
\[ F\colon  \md_{\HH(A/k)} ( \Sp^{BS^1}) \to \md_{\HC^-(A/k)}, \quad N \mapsto N^{hS^1}.  \]
By
\cite[Sec. 7]{MNN17}, $\md_k(\Sp^{BS^1})$ is identified (via the functor $X
\mapsto X^{hS^1}$) with the
$\infty$-category of $C^*(BS^1; k)$-modules complete with respect to the
augmentation $C^*(BS^1; k) \to k$. 
Taking modules over $\HH(A/k)$, 
we conclude that the functor $F$ is fully faithful. 
Equivalently, the left adjoint functor
\[ \md_{\HC^-(A/k)}  \to  \md_{\HH(A/k)} ( \Sp^{BS^1})\]
is a symmetric monoidal  localization.

To check the claim, it suffices to prove that the functor $F$ is strong symmetric monoidal on
dualizable objects by \cite[Lemma 7.18]{MNN17}. 
That is, for dualizable objects $M, N \in \md_{\HH(A/k)}( \Sp^{BS^1})$, one needs the
map
\begin{equation} \label{kunnethmap} F(M) \otimes_{\HC^-(A/k)} F(N) \to F( M
\otimes  N)  \end{equation}
to be an equivalence of $\HC^-(A/k)$-module spectra. 
Note that we have an element $x \in \pi_{-2} \HC^-(A/k)$
(i.e., a generator of $\pi_{-2} \HC^-(k/k) \simeq \pi_{-2} C^*(BS^1; k)$) such that $\HC^-(A/k)/x
\simeq \HH(A/k)$ and one has an
equivalence of $\HH(A/k)$-module spectra $F(M)/x \simeq
M$ for any $M \in \md_{\HH(A/k)}( \Sp^{BS^1})$
(cf. 
\cite[Sec. 7]{MNN17}). 
It thus follows that \eqref{kunnethmap} becomes an equivalence after
base-change $\HC^-(A/k) \to \HH(A/k)$. 

It thus suffices to show that \eqref{kunnethmap} becomes an equivalence after
inverting $x$. Now we have
\[ (F(M) \otimes_{\HC^-(A/k)} F(N))[1/x] \simeq M^{tS^1} \otimes_{\HP(A/k)}
N^{tS^1}, \quad F(M \otimes N)[1/x] \simeq (M \otimes_{\HH(A/k)}  N)^{tS^1}. \]
In other words, it suffices to show that the functor 
\[ F'\colon  \md_{\HH(A/k)} ( \Sp^{BS^1}) \to \md_{\HP(A/k)}, \quad N \mapsto N^{tS^1}.  \]
is strong symmetric monoidal on dualizable objects. 

However, by Lemma~\ref{nilinv} below, it follows that $F'$ can be identified with the
functor $M \mapsto (M \otimes_{\HH(A/k)} k)^{tS^1}$, i.e., $F'$ factors
through the symmetric monoidal functor 
$\md_{\HH(A/k)} ( \Sp^{BS^1}) \to  \md_k(\Sp^{BS^1})$ given by base-change
$\HH(A/k) \to k$. 
Furthermore,
$\HP(A/k) \simeq k^{tS^1}$. Since dualizable
objects in $\md_k(\Sp^{BS^1})$ are perfect, it follows that $ F'$ satisfies a K\"unneth formula. 
This implies the result. 
\end{proof}

\begin{lemma} 
\label{nilinv}
If $M$ is an object of $\mathrm{Mod}_{\HH(A/k)}(
\Sp^{BS^1})$ 
such that $M$ is bounded below, then the natural map $M \to M
\otimes_{\HH(A/k)} k$
induces an equivalence on $S^1$-Tate constructions.
\end{lemma}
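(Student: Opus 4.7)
Plan: The statement is equivalent to showing that the fiber $F = M \otimes_{\HH(A/k)} I$ has $F^{tS^1} = 0$, where $I = \mathrm{fib}(\HH(A/k) \to k)$. Note that $F$ is bounded below because $M$ and $\HH(A/k)$ are connective (so $I$ is at worst $(-1)$-connective). This is a module-level generalization of Goodwillie's classical theorem that $\HP(-/k)$ is invariant under nilpotent extensions in characteristic zero.

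First I would reduce to the case of a single elementary square-zero extension. Since $A$ is a local Artinian connective $\mathbf{E}_\infty$-ring over the characteristic-zero field $k$ and the augmentation $A \to k$ splits (by formal smoothness), one can build $A$ from $k$ by a \emph{finite} tower of square-zero extensions by finite-dimensional (shifted) $k$-vector spaces, obtained by combining the Postnikov filtration of $A$ with the $\mathfrak{m}$-adic filtration on $\pi_0(A)$. Because $(-)^{tS^1}$ is exact, one can run devissage along this tower, reducing the lemma to the case $A = k \oplus V$ for $V$ a finite-dimensional $k$-vector space placed in a single (nonnegative) homological degree. In this case I would invoke the HKR decomposition, which holds in characteristic zero in the derived sense: $\HH(A/k) \simeq \mathrm{Sym}_A(L_{A/k}[1])$. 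This gives an explicit weight grading on $\HH(A/k)$, with the augmentation ideal $I$ corresponding to strictly positive weight. The Connes $B$-operator acts on each weight-$n$ summand as (a shift of) the de Rham differential on $\mathrm{Sym}_k(V)$, so that the orbit/fixed point constructions of the individual weight pieces of $I$ are computed by (shifted, truncated) de Rham complexes of $\mathrm{Sym}_k(V)/k$ in strictly positive weight; these are acyclic by the Poincar\'e lemma in characteristic zero. Hence each graded piece of the weight filtration of $F$ has vanishing $S^1$-Tate.

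The main obstacle is Step 3: deducing vanishing of $F^{tS^1}$ from vanishing on each graded piece of the weight filtration on $F = M \otimes_{\HH(A/k)} I$. The Tate construction does not commute with limits in general, so one must argue that the weight filtration on $F^{tS^1}$ is convergent and complete — this is precisely where the hypothesis that $M$ is bounded below is used, since the $n$-th weight piece of $F$ is $n$-connective in an appropriate sense (each tensor factor of $V[\geq 1]$ increases connectivity), so the associated spectral sequence for $F^{tS^1}$ has only finitely many contributors in each bidegree and converges strongly. Combining this with the vanishing on associated graded terms established above yields $F^{tS^1} = 0$, as required.
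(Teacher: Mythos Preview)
Your reduction to the case $A = k \oplus V$ does not work as stated. Suppose $A = A_n \to A_{n-1} \to \cdots \to A_0 = k$ is your tower and you argue by induction on $n$. Given $M$ over $\HH(A_n/k)$, the inductive hypothesis applied to $M' := M \otimes_{\HH(A_n/k)} \HH(A_{n-1}/k)$ yields $(M')^{tS^1} \simeq (M \otimes_{\HH(A_n/k)} k)^{tS^1}$; what remains is to show $M^{tS^1} \simeq (M')^{tS^1}$, i.e., that $M \otimes_{\HH(A_n/k)} \mathrm{fib}\bigl(\HH(A_n/k) \to \HH(A_{n-1}/k)\bigr)$ has vanishing Tate. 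This is a statement about the full ring $A_n$ and the relative fiber, not an instance of the lemma for a ring of the form $k \oplus V$, and your HKR/Poincar\'e argument does not address it. There is a secondary issue even in the base case: under the HKR decomposition $\HH(A/k) \simeq \mathrm{Sym}_A(L_{A/k}[1])$ the weight-zero piece is $A$, not $k$, so $I = \mathrm{fib}(\HH(A/k) \to k)$ is \emph{not} the strictly positive HKR-weight part; and the Connes operator increases HKR weight by one, so it does not act on individual HKR-weight summands and you cannot analyze the Tate construction weight-by-weight in that grading.

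The paper runs d\'evissage on $M$ rather than on $A$. Using the Postnikov tower (and the bounded-below hypothesis to see that the relevant inverse limits stabilize degreewise and commute with Tate), one reduces to $M$ discrete; since discrete $\HH(A/k)$-modules are $\pi_0\HH(A/k) = A$-modules and $A$ is local Artinian, one reduces further to $M$ a discrete $k$-module via the augmentation. Then $M \otimes_{\HH(A/k)} k \simeq M \otimes_k \HH(k \otimes_A k / k)$, and one finishes by observing that $k \otimes_A k$ is a free $\mathbf{E}_\infty$-$k$-algebra $\mathrm{Sym}^*(V)$ on a $V$ concentrated in strictly positive degrees (from the Hopf algebra structure on $\pi_*(k \otimes_A k)$ in characteristic zero), whence $\HH(k \otimes_A k / k) \simeq \mathrm{Sym}^*(S^1_+ \otimes V)$ and the positive symmetric powers are free $k[S^1]$-modules, contributing nothing to the Tate construction. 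Filtering $M$ instead of $A$ sidesteps exactly the relative step where your argument stalls.
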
 
\begin{proof}

Now $M \simeq \varprojlim \tau_{\leq n} M$ and $M \otimes_{\HH(A/k)} k \simeq
\varprojlim (\tau_{\leq n}M \otimes_{\HH(A/k)} k)$. Both of these inverse limits
become constant in any given range of dimensions. Therefore, they commute with
$S^1$-Tate constructions. 
Therefore, it suffices to assume that $M$ is $n$-truncated. By a further d\'evissage, we can assume that $M$ is
actually a discrete $k$-module, considered as a $\HH(A/k)$-module via the
augmentation. 
We are thus reduced to showing that if $N$ is a discrete $k$-module, then the map $$N \to N
\otimes_{\HH(A/k)} k \simeq N  \otimes_k (k \otimes_{\HH(A/k)} k ) \simeq N \otimes_k
\HH( k \otimes_A k/k)$$
induces an equivalence on $S^1$-Tate constructions. 

However, since the homology of $k\otimes_A k$ forms a connected graded, commutative Hopf algebra, it follows
that $\pi_*(k \otimes_A k)$ is the tensor product
of polynomial algebras on even-dimensional classes and exterior algebras on
odd-dimensional classes. Therefore, 
$k \otimes_A k$ is a free $\mathbf{E}_\infty$-$k$-algebra $\mathrm{Sym}^* V$
for some $k$-module spectrum $V$ with $\pi_i (V) = 0$ for $i \leq 0$. 
Furthermore, $\HH(k \otimes_A k/k) \simeq \mathrm{Sym}^*( S^1_+
\otimes V)$. The desired equivalence now follows because for $i > 0$, $\mathrm{Sym}^i( S^1_+ \otimes V)$ is a free
module over the group ring $k[S^1]$, and so the terms for $i > 0$  
(as a direct sum of graded free $k[S^1]$-modules, and hence a graded free
$k[S^1]$-module)
do not contribute to the Tate construction. 
\end{proof}

\begin{corollary} 
\label{nilinvsmp}
Let $A$ be a local Artin $\mathbf{E}_\infty$-ring and let $\mathcal{C}$ be a smooth and proper 
stable $\infty$-category over $A$. 
Then the map $\HP(\mathcal{C}/k) \to \HP(\mathcal{C} \otimes_A k/k)$ is an
isomorphism. 
\end{corollary}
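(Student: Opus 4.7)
The plan is to realize $\HP(\mathcal{C}/k) \to \HP(\mathcal{C}\otimes_A k/k)$ as the map on $S^1$-Tate constructions induced by the unit $M \to M \otimes_{\HH(A/k)} k$, where $M = \HH(\mathcal{C}/k) \in \md_{\HH(A/k)}(\Sp^{BS^1})$, and then invoke Lemma~\ref{nilinv}.

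First I would set up the base-change identification. Since $\HH(-/k)$ is a symmetric monoidal functor from $k$-linear stable $\infty$-categories to $\Sp^{BS^1}$, sending $\md_A$ to the $\mathbf{E}_\infty$-algebra $\HH(A/k)$, it carries the pushout $\mathcal{C}\otimes_A k = \mathcal{C}\otimes_{\md_A}\md_k$ to
\[ \HH(\mathcal{C}\otimes_A k/k) \simeq \HH(\mathcal{C}/k)\otimes_{\HH(A/k)} \HH(k/k) \simeq M \otimes_{\HH(A/k)} k. \]
Taking $S^1$-Tate constructions, it will thus suffice to show that the natural map $M^{tS^1} \to (M\otimes_{\HH(A/k)} k)^{tS^1}$ is an equivalence.

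Next I would verify the hypotheses of Lemma~\ref{nilinv}. Since $\mathcal{C}$ is smooth and proper over $A$, it is a dualizable object of the $\infty$-category of $\md_A$-module categories; applying the symmetric monoidal functor $\HH(-/k)$ shows that $M$ is a dualizable object of $\md_{\HH(A/k)}(\Sp^{BS^1})$. By Proposition~\ref{dualizableperfect}, $M$ is therefore perfect, i.e., lies in the thick subcategory generated by the unit $\HH(A/k)$. Because $A$ is connective, $\HH(A/k)$ is also connective, and so any perfect $\HH(A/k)$-module is bounded below; in particular $M$ is bounded below.

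With boundedness below established, Lemma~\ref{nilinv} gives the equivalence $M^{tS^1} \xrightarrow{\sim} (M\otimes_{\HH(A/k)} k)^{tS^1}$, which together with the base-change identification above yields the claimed equivalence $\HP(\mathcal{C}/k) \simeq \HP(\mathcal{C}\otimes_A k/k)$. The only real content is packaged into the two inputs: the K\"unneth-type isomorphism from symmetric monoidality (immediate) and the boundedness of $M$ (for which the crucial step is the dualizable $\Rightarrow$ perfect passage of Proposition~\ref{dualizableperfect}); the main obstacle, already handled in that proposition, is therefore the Tate-vanishing argument showing $k\otimes_A k$ is a polynomial--exterior $\mathbf{E}_\infty$-algebra, which is what makes the K\"unneth formula (and hence the whole reduction) work.
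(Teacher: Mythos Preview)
Your proof is correct and follows the same approach as the paper: identify $\HH(\mathcal{C}\otimes_A k/k)\simeq \HH(\mathcal{C}/k)\otimes_{\HH(A/k)}k$ via symmetric monoidality and then apply Lemma~\ref{nilinv}. You are more explicit than the paper in checking the bounded-below hypothesis of Lemma~\ref{nilinv}, which the paper leaves implicit.

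One remark on efficiency: invoking Proposition~\ref{dualizableperfect} to obtain bounded-belowness is heavier than necessary. Since the forgetful functor $\md_{\HH(A/k)}(\Sp^{BS^1})\to\md_{\HH(A/k)}$ is symmetric monoidal, the underlying $\HH(A/k)$-module of a dualizable object is already dualizable, hence perfect, hence bounded below (as $\HH(A/k)$ is connective). This avoids the full K\"unneth machinery of Proposition~\ref{dualizableperfect}. Also, your closing sentence slightly misattributes the ``polynomial--exterior'' computation: that argument lives in the proof of Lemma~\ref{nilinv} itself, not in Proposition~\ref{dualizableperfect}, and its role is to establish the Tate-vanishing in Lemma~\ref{nilinv} rather than the K\"unneth formula. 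But this is a point of exposition, not a gap in the argument.
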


Note that when $A = k$ itself, this recovers certain cases of the classical
theorem of Goodwillie (\cite[Theorem~II.5.1]{Good}, \cite[Lemma~I.3.3]{Goodrel}) about the nilinvariance of periodic cyclic
homology. 
The corollary follows from Lemma~\ref{nilinv} because  one has an equivalence
\[ \HH(\mathcal{C} \otimes_A k/k) \simeq \HH(  \mathcal{C}/k )
\otimes_{\HH(A/k)} k.    \]

\begin{corollary} 
\label{localartinfree}
Let $A$ be a local Artin $\mathbf{E}_\infty$-ring. 
Let $M \in \md_{\HH(A/k)}( \Sp^{BS^1})$ be dualizable, and let $M_A = M
\otimes_{\HH(A/k)} A 
\in \md_{A}(\Sp^{BS^1})
$ and $M_k \in M \otimes_{\HH(A/k)}k \in \perf(k)^{BS^1}$. 
Then: 
\begin{enumerate}
\item  
$M_A  \in \md_{A}(\Sp^{BS^1})$ belongs to the thick subcategory
generated by the unit. 
\item $M_A^{tS^1} \otimes_A k \simeq M_k^{tS^1}$. 
\item
$M_A^{tS^1}$ is a graded free $A^{tS^1}$-module. 
\end{enumerate}

\end{corollary}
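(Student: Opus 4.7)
The strategy is to prove the three statements in order, with part (2) carrying the bulk of the work and the other two following formally.

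For (1), I invoke Proposition~\ref{dualizableperfect}: since $M$ is dualizable in $\md_{\HH(A/k)}(\Sp^{BS^1})$, it lies in the thick subcategory generated by the unit $\HH(A/k)$. The base-change functor $- \otimes_{\HH(A/k)} A$ to $\md_A(\Sp^{BS^1})$ is exact symmetric monoidal and sends the unit to the unit, so it preserves the thick subcategory of the unit; this gives (1) immediately.

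For (2), my plan is a thick-subcategory reduction back in $\md_{\HH(A/k)}(\Sp^{BS^1})$. The natural transformation $M_A^{tS^1} \otimes_A k \to M_k^{tS^1}$, induced by the lax symmetric monoidality of Tate, is a map of functors of $M$ each of which is exact in $M$; by Proposition~\ref{dualizableperfect} it suffices to verify it is an equivalence when $M = \HH(A/k)$, i.e., to prove $A^{tS^1} \otimes_A k \simeq k^{tS^1}$. Here I would use that $A$ is dualizable as a $k$-module, since it is a local Artin $\mathbf{E}_\infty$-algebra over the field $k$ (so its homotopy is finite-dimensional over $k$). Consequently tensoring with $A$ commutes with the limit computing $k^{hS^1}$, giving $A^{hS^1} \simeq A \otimes_k k^{hS^1}$; inverting the degree $-2$ generator yields $A^{tS^1} \simeq A \otimes_k k^{tS^1}$, and base-changing along $A \to k$ produces the required identification.

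For (3), I apply the exact symmetric monoidal Tate functor $\md_A(\Sp^{BS^1}) \to \md_{A^{tS^1}}$ (which sends the unit to $A^{tS^1}$) to the object $M_A$ of (1), concluding that $M_A^{tS^1}$ is perfect over $A^{tS^1}$. Combining (2) with the identification $A^{tS^1} \otimes_A k \simeq k^{tS^1}$ gives $M_A^{tS^1} \otimes_{A^{tS^1}} k^{tS^1} \simeq M_k^{tS^1}$, which is graded free over $k^{tS^1}$ since $\pi_* k^{tS^1} \simeq k[u^{\pm 1}]$ with $|u|=-2$ is a graded field. As $A^{tS^1}$ is a graded local ring with residue field $k^{tS^1}$ and nilpotent maximal ideal (inherited from the Artin local $A$), a graded Nakayama argument lifts the graded basis of $M_k^{tS^1}$ to show $M_A^{tS^1}$ itself is graded free over $A^{tS^1}$. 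The main obstacle in the whole proof is the identification $A^{tS^1} \otimes_A k \simeq k^{tS^1}$ in (2), which rests on the perfectness of $A$ over $k$ in characteristic zero; with that computation in hand, parts (1) and (3) reduce to routine thick-subcategory and Nakayama considerations.
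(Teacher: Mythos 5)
Parts (1) and (2) of your argument are correct and essentially the paper's: the paper also deduces (1) from Proposition~\ref{dualizableperfect} by base change, and proves (2) by a thick-subcategory reduction to the unit (it runs the reduction in $\md_A(\Sp^{BS^1})$ using part (1), while you run it one level up in $\md_{\HH(A/k)}(\Sp^{BS^1})$; either works, and your explicit verification that $A^{tS^1}\otimes_A k\simeq k^{tS^1}$ via perfectness of $A$ over $k$ is a fine way to handle the unit case, which the paper leaves implicit).

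Part (3), however, has a genuine gap. Your inputs are only that $M_A^{tS^1}$ is perfect over $A^{tS^1}$ and that its reduction $M_A^{tS^1}\otimes_{A^{tS^1}}k^{tS^1}\simeq M_k^{tS^1}$ is graded free over $k^{tS^1}$. The second condition is vacuous: since $\pi_*k^{tS^1}\simeq k[u^{\pm1}]$ is a graded field, \emph{every} $k^{tS^1}$-module is graded free, so your argument would show that every perfect $A^{tS^1}$-module is graded free, which is false. Concretely, take $A=k[\epsilon]/\epsilon^2$ and $N=\mathrm{cofib}\bigl(\epsilon\colon A^{tS^1}\to A^{tS^1}\bigr)$: then $N$ is perfect, $N\otimes_{A^{tS^1}}k^{tS^1}\simeq k^{tS^1}\oplus\Sigma k^{tS^1}$ is graded free, but $\pi_*N$ is killed by $\epsilon$ and hence $N$ is not free. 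The precise failure point in your write-up is the phrase ``lifts the graded basis'': to build a map from a free module you must lift a $\pi_*k^{tS^1}$-basis of $\pi_*M_k^{tS^1}$ along $\pi_*M_A^{tS^1}\to\pi_*M_k^{tS^1}$, and this map need not be surjective (in the example above it is zero in odd degrees, the odd classes arising from a Tor term). Nakayama only helps \emph{after} the lifts exist. This is exactly where the paper brings in Lemma~\ref{nilinv}: the composite $M^{tS^1}\to M_A^{tS^1}\to M_A^{tS^1}\otimes_Ak\simeq M_k^{tS^1}$ is an equivalence by nilinvariance of the Tate construction, so $M_A^{tS^1}\to M_k^{tS^1}$ admits a section of $k$-module spectra and in particular is surjective on homotopy; one can then lift a basis and conclude freeness by the Nakayama-type argument you describe. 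Your proof never uses the dualizability of $M$ over $\HH(A/k)$ in part (3) (only the weaker consequences recorded in (1) and (2)), and that missing hypothesis is precisely what Lemma~\ref{nilinv} exploits.
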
 
\begin{proof} 
By
Proposition~\ref{dualizableperfect}, 
$M$
belongs 
to the thick subcategory generated by the unit in 
$\md_{\HH(A/k)}( \Sp^{BS^1})$. It follows that 
$M_A \in \md_{A}(\Sp^{BS^1})$ belongs to the thick subcategory
generated by the unit. Thus, we  obtain the first claim. 
The second claim is implied by the first, as for any perfect object  
$X \in \md_A( \Sp^{BS^1})$, one has $(X \otimes_A k)^{tS^1} \simeq X^{tS^1}
\otimes_A k$ by a thick subcategory argument. 

Finally,  one has  natural maps
\[  M^{tS^1} \to M_A^{tS^1} \to
M_A^{tS^1} \otimes_A k \simeq M_k^{tS^1} 
,\]
such that the composite is an equivalence 
by Lemma~\ref{nilinv}.
Thus, the map $M_A^{tS^1} \to (M_A^{tS^1}) \otimes_A k$ has a
section of $k$-module spectra. 
Lifting a basis, this implies that $M_A^{tS^1}$ is free as an $A^{tS^1}$-module. 
\end{proof}

\begin{lemma} 
Let $A$ be an augmented local Artin $\mathbf{E}_\infty$-ring with residue field $k$. 
Let $M $ be a perfect $A$-module. Then 
\begin{equation} \label{ineqh} \dim_k ( \pi_*(M) ) \leq 
(\dim_k \pi_*(A)) ( \dim_k \pi_*(M \otimes_A k) )
,\end{equation}
and if equality holds $M$ is free. 
\end{lemma}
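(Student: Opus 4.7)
My plan is to imitate the construction of a minimal free resolution over a classical Noetherian local ring, adapted to the $\mathbf{E}_\infty$-setting, and then read off the bound cell by cell from iterated long exact sequences. Since $\pi_*(A)$ is finite-dimensional over $k$ and $M$ is perfect, $\pi_*(M)$ is finite-dimensional over $k$ as well; after shifting we may assume $M$ is connective.

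The key step is to build a minimal cell structure on $M$, i.e., a filtration $0 = M^{(0)} \to M^{(1)} \to \cdots \to M^{(r)} \simeq M$ with successive cofibers single shifted copies of $A$, such that $r = \dim_k \pi_*(M \otimes_A k)$. I would build this inductively. Given $M^{(i-1)} \to M$ with cofiber $C_{i-1}$, let $m$ be the lowest nonzero degree of $\pi_*(C_{i-1})$. At this degree the edge map $\pi_m(C_{i-1}) \twoheadrightarrow \pi_m(C_{i-1} \otimes_A k)$ is surjective (this is the edge of the K\"unneth spectral sequence, and can also be read off directly from the cofiber sequence $\mathrm{fib}(A \to k) \otimes_A C_{i-1} \to C_{i-1} \to C_{i-1} \otimes_A k$ together with the connectivity of $\mathrm{fib}(A \to k)$). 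Lifting a basis element of $\pi_m(C_{i-1} \otimes_A k)$ yields a map $\Sigma^m A \to C_{i-1}$, which (via the octahedral axiom) promotes to a cell attachment $M^{(i-1)} \to M^{(i)} \to \Sigma^m A$ together with a factorization $M^{(i-1)} \to M^{(i)} \to M$ whose new cofiber $C_i$ is the cofiber of $\Sigma^m A \to C_{i-1}$. Nakayama for $\pi_0(A)$-modules guarantees $\dim_k \pi_*(C_i \otimes_A k) = \dim_k \pi_*(C_{i-1} \otimes_A k) - 1$. Once this dimension reaches zero, the derived Nakayama lemma (applied at the lowest nonzero homotopy group of the perfect module) forces that $C = 0$, so $M^{(r)} \simeq M$ and the cell count is exactly $r = \dim_k \pi_*(M \otimes_A k)$.

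With this cell structure in place, the inequality follows by iterating the long exact sequences for $M^{(i-1)} \to M^{(i)} \to \Sigma^{m_i} A$: each one gives $\dim_k \pi_*(M^{(i)}) \leq \dim_k \pi_*(M^{(i-1)}) + \dim_k \pi_*(A)$, and after $r$ steps we arrive at $\dim_k \pi_*(M) \leq r \cdot \dim_k \pi_*(A) = \dim_k \pi_*(M \otimes_A k) \cdot \dim_k \pi_*(A)$.

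For the equality case, equality forces each such long exact sequence to split into short exact sequences, so every connecting boundary vanishes. The boundary in the $i$-th cofiber sequence is left multiplication by the attaching class $\partial_i \in \pi_{m_i - 1}(M^{(i-1)})$, and its vanishing in particular on $\pi_0(A) \hookrightarrow \pi_*(\Sigma^{m_i} A)$ forces $\partial_i = 0$, so the attaching map is null-homotopic and $M^{(i)} \simeq M^{(i-1)} \oplus \Sigma^{m_i} A$. By induction $M$ is a direct sum of shifted copies of $A$, i.e., free. The main obstacle is constructing the minimal cell structure: it rests on the edge-surjectivity $\pi_m(X) \twoheadrightarrow \pi_m(X \otimes_A k)$ at the bottom degree of a connective $A$-module $X$ and on the derived Nakayama lemma for perfect modules, both standard once one unpacks the relevant cofiber sequences.
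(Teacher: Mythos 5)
Your argument is correct, and for the equality clause it is essentially the paper's own proof: the paper likewise peels off one free cell at the lowest nonzero degree (choosing $x \in \pi_i(M)$ with nonzero image in $\pi_i(M \otimes_A k)$, forming $\Sigma^i A \xrightarrow{x} M \to N$), checks that $N$ again satisfies equality, inducts, and concludes from the collapse of the long exact sequence; your tower $M^{(0)} \to \cdots \to M^{(r)}$ is the same induction run bottom-up, with the null-homotopy of the attaching maps made explicit. Where you genuinely diverge is the inequality itself. The paper filters $A$ (as an $A$-module) by $\dim_k \pi_*(A)$ shifted copies of $k$, so that $M \simeq M \otimes_A A$ acquires a filtration with graded pieces shifts of $M \otimes_A k$, and the bound is immediate with no cell structure at all. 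You instead extract it from the minimal cell structure, which costs you the derived Nakayama lemma and the edge surjectivity up front, but then renders the equality case transparent (every boundary, hence every attaching class, must vanish); in effect you have packaged the whole lemma as the statement that a minimal free resolution of a perfect module over such an $A$ has exactly $\dim_k \pi_*(M \otimes_A k)$ cells. Both routes are legitimate. One point worth making explicit in your write-up: to lift a basis element of $\pi_m(C_{i-1} \otimes_A k)$ you first need this group to be nonzero, and that is where classical Nakayama actually enters (applied to the finitely generated $\pi_0(A)$-module $\pi_m(C_{i-1})$, finite generation coming from perfectness of $C_{i-1}$ and the standing hypotheses on $A$); the drop of $\dim_k \pi_*(C_i \otimes_A k)$ by exactly one is then just the long exact sequence together with the injectivity of $k \to \pi_m(C_{i-1} \otimes_A k)$ on the chosen class, rather than Nakayama per se.
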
 
\begin{proof} 
Since $A$ has a filtration 
(in $A$-modules)
by copies of $k$, the inequality is evident. 
If equality holds, suppose that $i \in \mathbb{Z}$ is minimal such that
$\pi_i(M) \neq 0$. 
Then also $\pi_i(M \otimes_A k)  = \pi_i(M) \otimes_{\pi_0(A)} k \neq 0$ by
Nakyama's lemma. 
Choose $x \in \pi_i(M)$ whose image in $\pi_i(M \otimes_A k)
$ is nonzero. Form a cofiber sequence $ \Sigma^i A
\stackrel{x}{\to} M \to N$ of $A$-modules. It follows that 
$$\dim_k( \pi_*(N \otimes_A k)) = \dim_k ( \pi_*(M \otimes_A k)) -1, \quad 
\dim_k ( \pi_*(M)) \leq \dim_k ( \pi_*(N)) + \dim_k  \pi_*(A).$$
Combining this with 
\eqref{ineqh}, we find that $\dim_k \pi_*(N) = (\dim_k \pi_*(A))(\dim_k \pi_*(N
\otimes_A k)) $. By an evident induction, $N$ is free as an $A$-module. The long
exact sequence in homotopy, which must reduce to a short exact sequence, now shows that $M$ is also free as an $A$-module. 
\end{proof}

We can now prove the main freeness and degeneration theorems of this section,
which provides a substantial strengthening of Theorem~\ref{char0degcrit}.
Let $\mathrm{CAlg}( \Sp_{\geq 0})$ denote the $\infty$-category of connective
$\mathbf{E}_\infty$-rings. In the following argument, one could also work with
simplicial commutative rings. 
We will use the notion of a
compact or finitely presented object in the $\infty$-category of connective
$\einf$-rings. 
Recall that this in particular implies that, for any such $R$, $\pi_0(R)$ is
finitely generated as a ring, and $\pi_i(R)$ is a finitely generated
$\pi_0(R)$-module for each $i \geq 0$, cf. \cite[7.2.4]{HA}.  
\begin{theorem}[General formality criterion, relative case] 
\label{strongformality0}
Let $A$ be a connective $\mathbf{E}_\infty$-algebra over $\mathbb{Q}$. 
Let $M \in \perf(A)^{BS^1}$. 
Suppose that there exists a compact object $R \in \mathrm{CAlg}(\Sp_{\geq
0})$ with a map $R \to A$, 
a dualizable object $M'_R
\in \md_{\THH(R)}(\CycSp)$, and an equivalence $M'_R \otimes_{\THH(R)} A \simeq
M \in \perf(A)^{BS^1}$. Then $M$ is a finitely generated graded projective
$A$-module and the $S^1$-action on $M$ is formal. 
\end{theorem}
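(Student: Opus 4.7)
The plan is to reduce to the case where $A$ is a local Artinian $\mathbf{E}_\infty$-algebra over $\mathbb{Q}$, and then to apply Corollary~\ref{localartinfree} together with the preceding freeness lemma. Both of the desired conclusions (projectivity of each $\pi_i M$ over $\pi_0 A$, and formality of the $S^1$-action on $M$) are local on $\mathrm{Spec}(\pi_0 A)$: formality is equivalent to the degeneration of the Tate spectral sequence for $M^{tS^1}$, whose differentials are $A$-linear maps between perfect $A$-modules and hence may be tested on local Artinian quotients of $A$; projectivity of the finitely generated module $\pi_i M$ may similarly be detected via Tor-vanishing at each residue field and lifted to the Artinian level. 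Since the hypothesis of the theorem is manifestly preserved under base change along any $A \to A_0$ (the datum $(R, M'_R)$ is unchanged), we may assume $A$ is local Artinian with residue field $k$ a field of characteristic zero.

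Under this assumption, pick a $\mathbb{Q}$-algebra section $k \hookrightarrow A$, giving $A$ the structure of a $k$-algebra. Using the composite map of $\mathbf{E}_\infty$-algebras in $\Sp^{BS^1}$
\[ \THH(R) \;\longrightarrow\; \THH(A) \;\longrightarrow\; \THH(A) \otimes_{\THH(k)} k \;\simeq\; \HH(A/k), \]
we base-change the dualizable object $M'_R$ to obtain a dualizable
\[ \widetilde{M} \;:=\; M'_R \otimes_{\THH(R)} \HH(A/k) \;\in\; \md_{\HH(A/k)}(\Sp^{BS^1}), \]
which satisfies $\widetilde{M} \otimes_{\HH(A/k)} A \simeq M$ in $\perf(A)^{BS^1}$.

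Now apply Corollary~\ref{localartinfree} to $\widetilde{M}$: parts (2) and (3) yield that $M^{tS^1}$ is graded free over $A^{tS^1}$ of some rank $(r_0, r_1)$ and that base change gives $M_k^{tS^1}$ free of the same rank over $k^{tS^1}$. Over the field $k$, the freeness of $M_k^{tS^1}$ implies that $M_k$ is formal, so $\dim_k \pi_{\mathrm{even}} M_k = r_0$ and $\dim_k \pi_{\mathrm{odd}} M_k = r_1$. The preceding freeness lemma then gives $\dim_k \pi_*(M) \leq \dim_k(\pi_* A) \cdot (r_0 + r_1)$, while the rank computation for $M^{tS^1}$ forces the reverse inequality $\dim_k \pi_{\mathrm{even}}(M) \geq r_0 \dim_k(\pi_* A)$ and analogously in odd degrees, since the $E_\infty$-terms of the Tate spectral sequence are subquotients of the $E_2$-page $\pi_*(M)[u^{\pm 1}]$. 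These combine to give equality, which by the lemma forces $M$ to be free as an $A$-module (so each $\pi_i M$ is finitely generated projective over $\pi_0 A$) and simultaneously forces the Tate spectral sequence to degenerate, yielding formality of the $S^1$-action.

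The main obstacle is the reduction to the local Artinian case: formally checking that both conclusions can be detected on the collection of local Artinian quotients of $A$, and that this detection is compatible with the base changes of $(R, M'_R)$. Once this reduction is granted, the Artinian case itself follows cleanly from the machinery already developed in this subsection (Proposition~\ref{dualizableperfect}, Corollary~\ref{localartinfree}, and the preceding dimension lemma), combined with the field-level formality criterion provided by Theorem~\ref{char0degcrit}.
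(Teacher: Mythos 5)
Your Artinian-case argument follows the paper's route (lift $M$ to a dualizable object of $\md_{\HH(A/k)}(\Sp^{BS^1})$, apply Corollary~\ref{localartinfree}, and close the dimension count with the freeness lemma), but it contains one genuine logical error. You write that ``over the field $k$, the freeness of $M_k^{tS^1}$ implies that $M_k$ is formal.'' This implication is false: $k^{tS^1}$ has homotopy ring $k[u^{\pm 1}]$, a graded field, so \emph{every} module over it is free, and freeness of $M_k^{tS^1}$ carries no information about degeneration. The formality of $M_k$ --- equivalently, the equality $\dim_k \pi_*(M_k) = \dim_k \pi_0(M_k^{tS^1}) + \dim_k \pi_1(M_k^{tS^1})$ that anchors your entire dimension count --- is precisely where Theorem~\ref{char0degcrit} must be invoked, using that $R$ is compact (hence $\pi_0 R$ is a finitely generated $\mathbb{Z}$-algebra) and that $M_k \simeq M'_R \otimes_{\THH(R)} k$ admits the dualizable cyclotomic lift $M'_R$. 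You do cite Theorem~\ref{char0degcrit} in your closing summary, so the ingredient is in your toolkit, but as written the step that actually uses it is justified by a vacuous statement, and without it the inequalities do not close.

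The reduction to the local Artinian case, which you flag as the main obstacle, also needs more than ``both conclusions are local on $\spec(\pi_0 A)$.'' For a general connective $\mathbf{E}_\infty$-$\mathbb{Q}$-algebra, $\pi_0(A)$ need not be noetherian and the homotopy groups of a perfect module need not be finitely generated, so testing on Artinian quotients is not even well posed. The paper first reduces to the case where $A$ is a compact object of $\mathrm{CAlg}(\Sp_{\geq 0})_{\mathbb{Q}/}$, so that $\pi_0(A)$ is noetherian and each $\pi_i(A)$ is finitely generated; then localizes at a prime of $\pi_0(A)$; and then runs the Artinian case over the truncations $\tau_{\leq m}\bigl(A/(x_1^r, \dots, x_n^r)\bigr)$, passing back to $A$ via the inverse limit, which is the $\mathfrak{m}$-adic completion and hence faithfully flat over $A$. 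Detecting vanishing of spectral-sequence differentials on quotients alone does not suffice; it is the freeness of $M \otimes_A \tau_{\leq m} A'_r$ for all $m, r$, together with perfectness of $M$ and faithful flatness of the completion, that yields freeness and formality over $A$. You should supply this chain explicitly rather than deferring it.
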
 
\begin{proof} 
We first treat the case where $A$ is a local Artin $\mathbf{E}_\infty$-ring
with residue field $k$. 
To see that $M$ is free, it suffices to show that equality holds in \eqref{ineqh}. 
Our assumptions show that $M$ lifts to a dualizable object of $\md_{\HH(A/k)}
(\Sp^{BS^1})$. 
Using the Tate spectral sequence, one obtains
\begin{equation} \label{relineq}  \dim_k  \pi_0 (M^{tS^1})+ 
\dim_k \pi_1( M^{tS^1}) 
\leq \dim_k \pi_{\mathrm{*}}(M )
.\end{equation}
Moreover, 
by Corollary~\ref{localartinfree}, 
we know that $M^{tS^1}$ is a free $A^{tS^1}$-module and that
$M^{tS^1} \otimes_A k \simeq (M \otimes_A k)^{tS^1}$. 
Note that $\pi_0 R$ is a finitely generated $\mathbb{Z}$-algebra. 
Thus we can apply Theorem~\ref{char0degcrit}, and we find that $M \otimes_A k$ is formal in $\perf(k)^{BS^1}$.
We obtain:
\begin{align*}
\dim_k  \pi_0 (M^{tS^1})+ 
\dim_k \pi_1(M^{tS^1})  & = 
\left( \dim_k  \pi_0 \left( (M \otimes_A k)^{tS^1} \right) + \dim_k \pi_1 
\left( (M \otimes_A k)^{tS^1} \right)
\right)
\dim_k \pi_*(A) \\
& =  \dim_k \pi_* (M \otimes_ A k)  \dim_k \pi_*(A). 
\end{align*}
Combining 
the above two inequalities, we obtain 
$\dim_k \pi_* ( M \otimes_A k)   \dim_k \pi_*(A) \leq \dim_k
\pi_*(M)$, which shows that the converse of
\eqref{ineqh} holds and $M$ is free. 
Moreover, equality holds in \eqref{relineq}, so that the $S^1$-Tate spectral
sequence for $M$ degenerates and $M$ is formal. 

We now treat the general case. 
Clearly it suffices to treat the case where $A$ is a
compact object of the $\infty$-category of connective
$\mathbf{E}_\infty$-algebras over $\mathbb{Q}$. 
In this case, $\pi_0(A)$ is noetherian (as a finitely generated
$\mathbb{Q}$-algebra) and the homotopy groups $\pi_i(A)$ are
finitely generated $\pi_0(A)$-modules. 
We thus suppose $A$ is of this form. 

To check the above statements, it suffices to replace $A$ by its localization at any prime ideal
of $\pi_0(A)$. Thus, we may assume that $\pi_0(A)$ is local. Let $x_1, \dots,
x_n \in \pi_0(A)$ be a system of generators of the maximal ideal. 
For each $r > 0$, we let $A'_r  = A/(x_1^r, \dots, x_n^r)$. 
Note moreover that $A'_r \simeq \varprojlim \tau_{\leq m} A'_r$ and that 
$ \varprojlim_r A'_r $ is the completion of $A$, which is in particular
faithfully flat over $A$. 
By the above analysis, 
$M \otimes_{A} \tau_{\leq m} A'_r$ is a free $\tau_{\leq m}
A'$-module for each $m,r$ and the Tate spectral sequence degenerates. Now we can let $m, r \to \infty$.  Since 
$M$ is perfect as an $A$-module, 
it follows that 
$M$ is free, as desired, and the $S^1$-action is formal.  
\end{proof} 

Let $A$ be a connective $\mathbf{E}_\infty$-algebra over $\mathbb{Q}$.
Similarly, one can construct \cite{HSS} the $\infty$-category $\nmot_A$ of noncommutative
motives of $A$-linear $\infty$-categories. 
We let $\nmot_A^\omega$ denote the thick subcategory generated by the
motives of smooth and proper $A$-linear $\infty$-categories. 
We have, again, a Hochschild homology functor $\HH(\cdot/A) \to \md_A^{BS^1}$. 
The next result gives a basic formality property of this functor; for smooth
and proper $A$-linear $\infty$-categories, it includes the degeneration of the
relative Hodge-to-de Rham spectral sequence. 
\begin{corollary} 
Let 
$X \in \nmot_A^\omega$. Then $\HH(X/A) \in \md_A^{BS^1}$ is a finitely
generated projective $A$-module and the $S^1$-action is formal. 
\end{corollary}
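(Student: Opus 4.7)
The plan is to mirror the strategy used in the field-case corollary earlier, but now invoking the relative formality criterion (Theorem~\ref{strongformality0}) in place of Theorem~\ref{char0degcrit}. The overall shape is: descend $X$ to a noncommutative motive over a compact $\mathbf{E}_\infty$-subalgebra $R$, use that $\THH$ promotes this descended motive to a dualizable cyclotomic $\THH(R)$-module, and then feed the result into Theorem~\ref{strongformality0}.

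First, I would invoke the results of \cite{toen, HSS} (together with the fact that algebraic $\mathrm{K}$-theory commutes with filtered colimits, exactly as in the proof of the field-case corollary) to express $\nmot_A^\omega$ as the filtered colimit of the $\infty$-categories $\nmot_R^\omega$, where $R$ ranges over compact connective $\mathbf{E}_\infty$-algebras over $\mathbb{Q}$ equipped with a map $R \to A$. Since $X \in \nmot_A^\omega$ is compact, one extracts such an $R$ together with a dualizable object $\widetilde{X} \in \nmot_R^\omega$ so that $\widetilde{X}$ base-changes to $X$.

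Next, since $\THH$ is a symmetric monoidal additive invariant of $R$-linear stable $\infty$-categories valued in $\md_{\THH(R)}(\CycSp)$ (compare \cite{BMloc, BGT, ayala-mg-rozenblyum}), the motive $\widetilde{X}$ yields a dualizable object $\THH(\widetilde{X}) \in \md_{\THH(R)}(\CycSp)$. Compatibility of $\THH$ with base change along $R \to A$ then produces an equivalence
\[ \THH(\widetilde{X}) \otimes_{\THH(R)} A \simeq \HH(X/A) \qquad \text{in } \perf(A)^{BS^1}. \]
Finally, Theorem~\ref{strongformality0} applied to $M = \HH(X/A)$ with lift $M'_R = \THH(\widetilde{X})$ delivers at once that $\HH(X/A)$ is a finitely generated projective $A$-module and that its $S^1$-action is formal.

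The only step requiring genuine care is the first: one must know that compact objects of $\nmot_A^\omega$ descend to compact $\mathbf{E}_\infty$-subalgebras in the appropriate sense, and that this descent is compatible with the passage to cyclotomic $\THH$-modules in a way preserving both dualizability and the $S^1$-equivariant identification with $\HH(X/A)$ after base change. Once that package is in place, all remaining work is formal, being subsumed by the relative formality criterion already established.
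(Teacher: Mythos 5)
Your proposal follows essentially the same route as the paper: descend $X$ to a dualizable motive $\widetilde{X}$ over a compact connective $\mathbf{E}_\infty$-algebra $R \to A$, pass to the dualizable object $\THH(\widetilde{X}) \in \md_{\THH(R)}(\CycSp)$ with $\THH(\widetilde{X}) \otimes_{\THH(R)} A \simeq \HH(X/A)$, and invoke Theorem~\ref{strongformality0}. The single step you defer is precisely where the paper's proof has its content: the cited results of To\"en are for discrete commutative rings, and the required $\mathbf{E}_\infty$-refinement (that smooth and proper linear $\infty$-categories, hence compact noncommutative motives, descend along filtered colimits of connective $\mathbf{E}_\infty$-rings) is established there via compactness of smooth and proper categories in the sense of Antieau--Gepner, descent of compact associative algebras from Lurie's work, and descent of the duality datum by compactness.
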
 
\begin{proof} 
Here we use a refinement of the results of \cite{toen} for
$\mathbf{E}_\infty$-algebras. 
Namely, we claim that the functor which assigns to an $\mathbf{E}_\infty$-ring spectrum
$R$ the $\infty$-category of smooth and proper $R$-linear $\infty$-categories
commutes with filtered colimits in $R$. 
Now, 
smooth and proper $R$-linear $\infty$-categories are compact; in fact, combine
\cite[Props. 3.5, 3.11]{AG}. Therefore, it suffices to see that if $R$ is a
filtered colimit of $\mathbf{E}_\infty$-algebras $R_i$, then any smooth and
proper $R$-linear $\infty$-category $\mathcal{C}$ descends to some $R_i$. 
To see this, we 
observe that $\mathcal{C}$ is equivalent to $\perf(B)$ for an associative
$A$-algebra $B$ which is compact \cite[Prop. 3.11]{AG}  and we can descend the
algebra  to a compact algebra over some $R_i$ thanks to 
\cite[Lemma 11.5.7.17]{SAG}. Moreover, by compactness we can also descend the
duality datum to some finite stage. 

In view of this, we conclude that given $X \in \nmot_A^\omega$, there exists a
compact object $R \in \mathrm{CAlg}(\Sp_{\geq 0})$ mapping to $A$ and a smooth and proper
$R$-linear $\infty$-category $\widetilde{\mathcal{C}}$ such that $\mathcal{C}
\simeq \widetilde{\mathcal{C}} \otimes_R A$. 
Using Theorem~\ref{strongformality0}, we can now conclude the proof as before. 
\end{proof}

\subsection{Characteristic $p$}
The characteristic zero assertion essentially amounts to the idea 
that periodic cyclic homology should form a crystal over the base which is also coherent, and any
such is necessarily well-known to be locally free. 
In characteristic $p$, one can appeal to an analogous argument: 
given a smooth algebra $R$ in characteristic $p$, any finitely generated
$R$-module $M$ isomorphic to its own Frobenius twist is necessarily locally
free \cite[Prop. 1.2.3]{EK}. In this subsection, we prove 
Theorem~\ref{relcharp} from the introduction. 
In doing so, we essentially use the Frobenius-semilinearity of the
cyclotomic Frobenius.

We first discuss what we mean by liftability. 
Let $A$ be  a regular (noetherian) $\mathbb{F}_p$-algebra. 
Recall that $A$ is \emph{$F$-finite} if the Frobenius map $\varphi\colon  A \to A$ is
a finite morphism. 
We refer to \cite[Sec. 2.2]{DundasMorrow} for a general discussion of $F$-finite rings. 

\begin{definition} 
Given an $F$-finite regular noetherian ring $A$, a \emph{lift} of $A$ to $\mathbb{Z}/p^2$ will mean 
a flat $\mathbb{Z}/p^2$-algebra $\widetilde{A}$ with an isomorphism
$\widetilde{A} \otimes_{\mathbb{Z}/p^2} \mathbb{F}_p \simeq A$. 
\end{definition}

Let $A$ be a regular noetherian $\mathbb{F}_p$-algebra. By Popescu's smoothing
theorem (see \cite[Tag 07GC]{stacks-project} for a general reference), $A$ is a
filtered colimit of smooth $\mathbb{F}_p$-algebras. It follows that the
cotangent complex $L_{A/\mathbb{F}_p}$ is concentrated in degree zero and
identified with the K\"ahler differentials; in addition, they form a flat
$A$-module. If $A$ is in addition $F$-finite,
then the K\"ahler differentials are finitely generated and therefore
projective as an $A$-module. 
Recall that the cotangent complex controls the infinitesimal deformation theory
of $A$ \cite[Ch. III, Sec. 2]{Illusiecotangent}. 
Therefore, $A$ is formally smooth as an $\mathbb{F}_p$-algebra, and a lift to
$\mathbb{Z}/p^2$ exists. 
Given a lift $\widetilde{A}$ to $\mathbb{Z}/p^2$,  it follows that
$\widetilde{A}$ is formally smooth over $\mathbb{Z}/p^2$. In particular, it
follows that any two lifts to $\mathbb{Z}/p^2$ are (noncanonically) isomorphic. 
Moreover, if $A \to B$ is a map of $F$-finite regular noetherian
$\mathbb{F}_p$-algebras and $\widetilde{A}, \widetilde{B}$ are respective lifts to
$\mathbb{Z}/p^2$, then the map lifts to a map $\widetilde{A} \to \widetilde{B}$.

Let $A$ be a regular $F$-finite $\mathbb{F}_p$-algebra. Then the Frobenius $\varphi\colon  A \to A$ is a
finite, flat morphism. We let $A_{\mathrm{perf}}$ denote the \emph{perfection}
of $A$, i.e., the colimit of copies of $A$ along the Frobenius map. 
Then we have inclusions
\[ A \subset A^{1/p} \subset A^{1/p^2} \subset \dots A_\mathrm{perf},  \]
such that all maps are 
faithfully flat and the colimit is $A_\mathrm{perf}$. Our strategy will
essentially be descent to $A_\mathrm{perf}$. Unfortunately, 
$A_\mathrm{perf}$ is not noetherian. Thus, we will need the following
result. 

\begin{proposition} 
Let $A$ be a regular $F$-finite $\mathbb{F}_p$-algebra. 
\label{coherentlemma}
\begin{enumerate}
\item  Then the ring
$A_{\mathrm{perf}}$ is coherent, i.e., the finitely presented modules form
an abelian category.
\item
Let $I \subset A$ be an ideal.  Given a 
finitely presented $A_\mathrm{perf}$-module $M$, the submodule $M' \subset M$
consisting of those elements annihilated by a power of $I$ is also coherent and
its annihilator in $A_\mathrm{perf}$ is finitely generated. 
\end{enumerate} 
\end{proposition}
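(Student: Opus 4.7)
The plan rests on the observation that $A_{\mathrm{perf}} = \varinjlim_n A^{1/p^n}$, where each transition map $A^{1/p^n} \hookrightarrow A^{1/p^{n+1}}$ is finite (by $F$-finiteness) and flat (by Kunz's theorem, since $A$ is regular), hence faithfully flat. Each $A^{1/p^n}$ is abstractly isomorphic to $A$ and therefore noetherian, and $A_{\mathrm{perf}}$ is flat over every $A^{1/p^n}$. The entire argument consists of descending the finitely generated/presented data at hand to some stage $A^{1/p^k}$, exploiting noetherianness there, and transporting back via flat base change.

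For part (1), a ring is coherent iff every finitely generated ideal is finitely presented. Given a finitely generated ideal $J \subset A_{\mathrm{perf}}$, choose $k$ so that all its generators lie in $A^{1/p^k}$ and let $J_0 \subset A^{1/p^k}$ be the ideal they generate. Since $A^{1/p^k}$ is noetherian, $J_0$ is finitely presented; by flatness of $A_{\mathrm{perf}}$ over $A^{1/p^k}$ the natural map $J_0 \otimes_{A^{1/p^k}} A_{\mathrm{perf}} \to A_{\mathrm{perf}}$ is injective with image $J$, so $J$ is finitely presented.

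For part (2), a finitely presented $A_{\mathrm{perf}}$-module $M$ is the cokernel of a map of free modules whose matrix has entries in some $A^{1/p^k}$, so $M$ descends to a finitely presented $A^{1/p^k}$-module $M_0$ with $M \simeq M_0 \otimes_{A^{1/p^k}} A_{\mathrm{perf}}$. Writing $I_0 := I A^{1/p^k}$, noetherianness of $A^{1/p^k}$ together with finite generation of $M_0$ yields a single $N$ with
\[
M_0' \;=\; \{x \in M_0 : I_0^N x = 0\} \;=\; \mathrm{Hom}_{A^{1/p^k}}(A^{1/p^k}/I_0^N,\, M_0),
\]
and $M_0'$ is a finitely generated submodule of $M_0$, hence finitely presented. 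The flat base change identity for $\mathrm{Hom}$ with finitely presented source,
\[
\mathrm{Hom}_{A^{1/p^k}}(A^{1/p^k}/I_0^N, M_0) \otimes_{A^{1/p^k}} A_{\mathrm{perf}} \;\simeq\; \mathrm{Hom}_{A_{\mathrm{perf}}}(A_{\mathrm{perf}}/I^N,\, M),
\]
then gives $M' \simeq M_0' \otimes_{A^{1/p^k}} A_{\mathrm{perf}}$, so $M'$ is finitely presented and, by part (1), coherent. For the annihilator, tensor the left-exact sequence $0 \to \mathrm{Ann}_{A^{1/p^k}}(M_0') \to A^{1/p^k} \to \mathrm{End}_{A^{1/p^k}}(M_0')$ with the flat ring $A_{\mathrm{perf}}$ and use that $\mathrm{End}$ also commutes with flat base change on a finitely presented module; this identifies $\mathrm{Ann}_{A_{\mathrm{perf}}}(M')$ with the extension of the noetherian ideal $\mathrm{Ann}_{A^{1/p^k}}(M_0')$, which is finitely generated.

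There is no serious obstacle here: the two essential points are (a) that a single $N$ captures the full $I$-power torsion in $M_0$, which is immediate from noetherianness, and (b) the careful application of the flat base change formulas for $\mathrm{Hom}$ and $\mathrm{End}$ on finitely presented modules. Everything else is bookkeeping around the colimit presentation $A_{\mathrm{perf}} = \varinjlim_n A^{1/p^n}$, which reduces all finiteness questions to the noetherian stage.
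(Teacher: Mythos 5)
Your proof is correct and takes essentially the same route as the paper's: present $A_{\mathrm{perf}}$ as a filtered colimit of the noetherian rings $A^{1/p^n}$ along flat (Kunz) transition maps, descend the finitely presented module and its $I$-power torsion submodule to a noetherian stage, and transport back by flat base change. You merely make explicit the flat base change identities for $\mathrm{Hom}$ and the annihilator that the paper leaves implicit.
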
 
\begin{proof} 
The first assertion follows because $A_\mathrm{perf}$ is the filtered colimit
of copies of the noetherian ring $A$ along 
the Frobenius map, which is flat by regularity. 
If $M$ is a coherent $A_{\mathrm{perf}}$-module, then $M$ 
descends to $A^{1/p^n}$ for some $n$, i.e., there exists a finitely generated
module $M_n$ over $A^{1/p^n}$ such that $M \simeq A_{\mathrm{perf}}
\otimes_{A^{1/p^n}}
M_n$. Then $M_n$ has an $A^{1/p^n}$ submodule $M'_n$ consisting of the
$I$-power torsion, which is also finitely generated 
(and hence finitely presented), and such that the quotient has no $I$-power
torsion. 
It follows from flatness that $M'_n \otimes_{A^{1/p^n}}
A_{\mathrm{perf}} = M'$, which is thus coherent. 
Since $M'$ is coherent, its annihilator ideal is also coherent. 
\end{proof} 

We will also need to observe that analogs of B\"okstedt's calculation of
$\THH(k)$ hold when $k$ is any perfect $\mathbb{F}_p$-algebra, not only a field. 
Similarly, analogs of Propositions~\ref{THHw2} and Proposition~\ref{freecrit}
hold with analogous arguments. 
\begin{proposition} \label{freenotperf}
Let $k$ be a perfect $\mathbb{F}_p$-algebra. Suppose $M$ is a perfect
$\THH(k)$-module and $\pi_i(M/\sigma) = 0$ for  $i \notin [a,b]$ for $b-a \leq 2p-2$. 
Suppose $M$ lifts to a perfect $\THH(W_2(k))$-module. Then, as $\pi_* \THH(k)
\simeq k[\sigma]$-modules, one has $\pi_*(M) \simeq \pi_*(M/\sigma) \otimes_k
k[\sigma]$. 
\end{proposition}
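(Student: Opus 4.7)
My plan is to adapt the arguments in the proofs of Propositions~\ref{THHw2} and \ref{freecrit} to allow $k$ to be a perfect $\mathbb{F}_p$-algebra rather than only a field, and then combine the resulting $\sigma$-injectivity in low degrees with the amplitude hypothesis $b - a \leq 2p - 2$ to read off the explicit $k[\sigma]$-module structure.

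First, I would verify that the analog of Proposition~\ref{THHw2} holds with $k$ a perfect $\mathbb{F}_p$-algebra. The proof goes through verbatim: formal étaleness of $W(k)/\mathbb{Z}_p$ for perfect $k$ makes the cotangent complex computation reduce to the same $W(k)$-module map $(p^2)/(p^4) \to (p)/(p^2)$, giving both $\pi_* \tau_{\leq 2p-2} \THH(W_2(k)) \simeq W_2(k)[u]/u^p$ and the factorization of the map of $\mathbf{E}_\infty$-rings $\THH(W_2(k)) \to \tau_{\leq 2p-2}\THH(k)$ through the canonical map $k \to \tau_{\leq 2p-2}\THH(k)$.

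Second, I would carry out the analog of Proposition~\ref{freecrit}. After shifting we may assume $a = 0$, so $b \leq 2p - 2$. Note $M$ is connective: $\pi_j(M/\sigma) = 0$ for $j < 0$ makes $\sigma \colon \pi_{j-2}(M) \to \pi_j(M)$ surjective via the cofiber sequence $\Sigma^2 M \xrightarrow{\sigma} M \to M/\sigma$, and $M$ is bounded below as a perfect module over the connective ring $\THH(k)$. Hence $M$ lifts to a connective perfect $\widetilde{M}$ over $\THH(W_2(k))$. As in the original proof, the canonical map
\[ M \to \tau_{\leq 2p-2}\widetilde{M} \otimes_{\tau_{\leq 2p-2}\THH(W_2(k))} \tau_{\leq 2p-2}\THH(k) \]
is an equivalence in degrees $\leq 2p-2$, and using the factorization from the first step, the right-hand side becomes $(M/\sigma) \otimes_k \tau_{\leq 2p-2}\THH(k)$. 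Since $\pi_* \tau_{\leq 2p-2}\THH(k) \simeq k[\sigma]/\sigma^p$ is free over $k$, iteratively splitting off connective covers realizes $\tau_{\leq 2p-2}\THH(k) \simeq \bigoplus_{j=0}^{p-1} \Sigma^{2j} k$ as a $k$-module spectrum. This gives an identification $\pi_i(M) \simeq \bigoplus_{j \geq 0} \pi_{i-2j}(M/\sigma)$ of $k$-modules for $i \leq 2p - 2$, compatible with the $\sigma$-action (which is the shift-of-summands); in particular $\sigma$ is injective on $\pi_j(M)$ for $j \leq 2p - 4$.

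Finally, the long exact sequence shows $\sigma \colon \pi_{i-2}(M) \to \pi_i(M)$ is an isomorphism for $i > b$, while its kernel at $\pi_j(M)$ is the image of the boundary $\pi_{j+3}(M/\sigma) \to \pi_j(M)$, which is nonzero only if $j + 3 \in [0, b]$, i.e. $j \leq b - 3 \leq 2p - 5$. The explicit identification from the second step covers this entire possible-torsion range, so $\pi_*(M)$ is $\sigma$-torsion-free. Using the $\sigma$-isomorphism for $i > b$, the formula $\pi_i(M) \simeq \bigoplus_{j \geq 0} \pi_{i-2j}(M/\sigma)$ (valid in degrees $\leq 2p - 2 \geq b$) extends to all $i$, yielding the desired $k[\sigma]$-linear isomorphism $\pi_*(M) \simeq \pi_*(M/\sigma) \otimes_k k[\sigma]$. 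I expect the most delicate point is producing the $k$-linear splitting of $\tau_{\leq 2p-2}\THH(k)$, which requires lifting the classes $\sigma^j$ to maps of $k$-module spectra; this proceeds by successive splittings of connective covers, available because each truncation layer is a free $k$-module.
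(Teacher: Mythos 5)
Your argument is correct and follows essentially the same route as the paper: reduce to $a=0$, use the truncated base-change equivalence \eqref{eqrange} together with the perfect-ring analog of Proposition~\ref{THHw2} to control degrees $\leq 2p-2$, and use the long exact sequence for $\sigma$ above that range. Your explicit splitting of $\tau_{\leq 2p-2}\THH(k)$ into shifted free $k$-modules is exactly the substitute for ``every $k$-module spectrum is free'' that the paper's phrase ``analogous arguments'' is implicitly invoking.
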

\begin{proof} 
Without loss of generality, $a = 0$. 
For each $j$, we need to argue that multiplication by $\sigma$ is a \emph{split} injection of
$k$-modules $\pi_{j-2}(M) \to \pi_j(M)$. 
Using the long exact sequence and the assumption on $M/\sigma$, we find that
multiplication by $\sigma$ is an isomorphism for $j \geq 2p-1$. 
For $j \leq 2p-2$, the equivalence in the range $[0, 2p-2]$ as in \eqref{eqrange} implies the result. 
\end{proof} 

We can now state and prove the main formality statement in characteristic $p$
over a regular $F$-finite base. 
\begin{theorem}[Formality criterion, relative characteristic $p$ case] 
Let $A$ be a regular $F$-finite $\mathbb{F}_p$-algebra. 
Let $\widetilde{A}$ be a flat lift to $\mathbb{Z}/p^2$. 
Let $M \in \perf(A)^{BS^1}$. Suppose
that: 
\begin{enumerate}
\item  
There exists a dualizable object $M' \in \md_{\THH(A)}( \CycSp)$ such that $M'
\otimes_{\THH(A)} A \in \perf(A)^{BS^1}$. 
\item 
$\pi_i(M)$ vanishes for $i \notin [a, b]$ for some $a,b$ with $b-a \leq 2p-2$. 
\item The underlying $\THH(A)$-module of $M'$ lifts to a
perfect $\THH(\widetilde{A})$-module.
\end{enumerate}
Then $M$ is a finitely generated (graded) projective $A$-module, and the
$S^1$-action on $M$ is formal. 
\end{theorem}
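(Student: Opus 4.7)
The plan is to reduce to the perfection $A_\mathrm{perf}$, where the characteristic $p$ freeness criterion of the previous section already applies, and then descend projectivity of the homotopy groups back to $A$ via the Frobenius-semilinearity of the cyclotomic structure together with the Emerton--Kisin theorem \cite[Prop.~1.2.3]{EK}.

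First I would base change along the faithfully flat map $A \to A_\mathrm{perf}$. Let $M'_\mathrm{perf} = M' \otimes_{\THH(A)} \THH(A_\mathrm{perf}) \in \md_{\THH(A_\mathrm{perf})}(\CycSp)$, still dualizable, and let $M_\mathrm{perf} = M \otimes_A A_\mathrm{perf} \in \perf(A_\mathrm{perf})^{BS^1}$; the amplitude bound $b-a \leq 2p-2$ is preserved by flat base change. Since $A_\mathrm{perf}$ is a perfect $\mathbb{F}_p$-algebra it admits a canonical flat lift $W_2(A_\mathrm{perf})$ to $\mathbb{Z}/p^2$, and the given lift $\widetilde{A}$ supplies a compatible lift of $A_\mathrm{perf}$, so that $M'_\mathrm{perf}$ lifts to a perfect $\THH(W_2(A_\mathrm{perf}))$-module. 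Proposition~\ref{freenotperf} then yields $\pi_*(M'_\mathrm{perf}) \cong \pi_*(M_\mathrm{perf}) \otimes_{A_\mathrm{perf}} A_\mathrm{perf}[\sigma]$; equivalently, $M_\mathrm{perf}$ is formal in $\perf(A_\mathrm{perf})^{BS^1}$.

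Second, I would use the cyclotomic Frobenius over $A$ itself to endow each finitely generated $A$-module $\pi_i(M)$ with a Frobenius-semilinear self-isomorphism. Proposition~\ref{noncommCart} applied to the dualizable object $M'$ gives a $\varphi$-semilinear equivalence $M'[1/\sigma] \xrightarrow{\varphi} (M')^{tC_p} \simeq M^{tS^1}$, where the target is a module over $\TP(A)$ and base-changes to $\HP(M_\mathrm{perf}/A_\mathrm{perf})$ upon passage to $A_\mathrm{perf}$ via Proposition~\ref{modp}. Combining this with the $\sigma$-torsion-freeness established over the perfection and the consequent degeneration of the Tate spectral sequence for $M_\mathrm{perf}^{tS^1}$, one extracts in each degree an isomorphism of finitely generated $A$-modules $\pi_i(M)^{(1)} \cong \pi_i(M)$. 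Invoking \cite[Prop.~1.2.3]{EK}, which asserts that a coherent module over a regular $F$-finite $\mathbb{F}_p$-algebra isomorphic to its Frobenius pullback is necessarily locally free, then gives the projectivity statement.

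Finally, formality of the $S^1$-action on $M$ descends from $A_\mathrm{perf}$ to $A$ by faithful flatness: once each $\pi_i(M)$ is projective, the Tate differentials are $A$-linear maps of finitely generated projective $A$-modules whose vanishing can be detected after base change to $A_\mathrm{perf}$, where formality has already been established. The principal obstacle I anticipate is the second step, namely extracting the Frobenius-semilinear isomorphism at the level of $\pi_i(M)$ as an $A$-module rather than only after base change to $A_\mathrm{perf}$. This requires careful bookkeeping of how the identification $\THH(A)[1/\sigma] \simeq \THH(A)^{tC_p}$ interacts with formality of $M_\mathrm{perf}$, and coherence of $A_\mathrm{perf}$ (Proposition~\ref{coherentlemma}) is likely needed to ensure the descended modules remain finitely generated and that the Emerton--Kisin hypothesis is applicable.
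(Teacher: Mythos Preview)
Your overall architecture---pass to $A_\mathrm{perf}$, use Proposition~\ref{freenotperf} for $\sigma$-torsion-freeness of $M'_\mathrm{perf}$, and exploit the Frobenius-semilinearity of the cyclotomic structure---matches the paper's, but the second step contains a genuine gap that the paper resolves by a different mechanism.

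You invoke Proposition~\ref{noncommCart} for $M' \in \md_{\THH(A)}(\CycSp)$ to get $M'[1/\sigma] \simeq (M')^{tC_p} \simeq M^{tS^1}$ over $A$ itself. This is not available: $A$ is not perfect, $\THH_*(A)$ is not a polynomial ring $A[\sigma]$, and Propositions~\ref{modp} and~\ref{noncommCart} are stated and proved only over perfect base. The cyclotomic Frobenius becomes an equivalence only after base change to $A_\mathrm{perf}$, and even there one must first check that $M'_\mathrm{perf}$ is \emph{perfect} (not just dualizable) in $\md_{\THH(A_\mathrm{perf})}(\Sp^{BS^1})$; this is Proposition~\ref{perfectperf}, which you do not invoke. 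Consequently the isomorphism $\pi_i(M)^{(1)} \cong \pi_i(M)$ of $A$-modules that you want to feed into Emerton--Kisin is never produced, and over $A_\mathrm{perf}$ such a statement is vacuous since the Frobenius is bijective there. Relatedly, your claim that Proposition~\ref{freenotperf} already yields formality of $M_\mathrm{perf}$ is unjustified: what one actually obtains (granting Proposition~\ref{perfectperf}) is only the identification $\pi_*(M_\mathrm{perf})[\sigma^{\pm 1}]^{(1)} \simeq \pi_*(M_\mathrm{perf}^{tS^1})$, and over a ring that is not a field this does not force Tate degeneration by a dimension count.

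The paper supplies the missing idea as follows. One first reduces to $A$ complete local with perfect residue field and inducts on the Krull dimension, so that by the inductive hypothesis all Tate differentials and all torsion in $\pi_*(M)$ are $\mathfrak{m}$-power torsion. Over $A_\mathrm{perf}$ one then lets $I$ be the annihilator of the $\mathfrak{m}$-power torsion submodule of $\pi_{\mathrm{even}}(M_\mathrm{perf})$; coherence of $A_\mathrm{perf}$ (Proposition~\ref{coherentlemma}) makes $I$ finitely generated. Since $\pi_0(M_\mathrm{perf}^{tS^1})$ is a subquotient of $\pi_{\mathrm{even}}(M_\mathrm{perf})$ with $\mathfrak{m}$-power-torsion differentials, the identification above forces $I \subset I^{[p]}$, and for a finitely generated $\mathfrak{m}$-primary ideal this is impossible unless $I = A_\mathrm{perf}$, i.e., the torsion vanishes. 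Freeness is then obtained by reducing modulo a regular parameter and invoking the inductive hypothesis once more. This annihilator-contraction argument over $A_\mathrm{perf}$, together with the Krull-dimension induction, replaces the direct appeal to \cite[Prop.~1.2.3]{EK} that your proposal attempts but cannot carry out.
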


\begin{proof} 
First, we  can reduce to the case where  $A$ is an $F$-finite regular local ring with maximal ideal $\mathfrak{m}$.
In this case, we can induct on the Krull dimension $d$ of $A$. We can assume that
the result holds for all $F$-finite regular local 
rings of Krull dimension less than $d$. 
When $d  = 0$, the claim is of course 
Theorem~\ref{kaledinp2}. 

 To verify the claims for $A$, we can now replace $A$ by its
 $\mathfrak{m}$-adic completion $\widehat{A}$, which is faithfully flat over
 $A$. Note that $\widehat{A}$ is also an $F$-finite regular local ring of Krull
 dimension $d$. 
 Since $\widehat{A}$ is complete, it contains a copy of its residue field $k$
 and is identified with $\widehat{A} \simeq k[[x_1, \dots, x_n]]$. 
We can consider the faithfully flat map $\widehat{A} \to
k_{\mathrm{perf}}[[x_1, \dots, x_{d}]]$. 
Replacing $A$ with 
$k_{\mathrm{perf}}[[x_1, \dots, x_{d}]]$, we will now 
simply assume that $A$ is in addition complete and has perfect residue field. 
By the inductive hypothesis, all the differentials in the Hodge-to-de Rham
spectral sequence are $\mathfrak{m}$-power
torsion and that $\HH(\mathcal{C}/A)$ is locally free away from $\mathfrak{m}$.

Let $A_{\mathrm{perf}}$ denote the (colimit) perfection of $A$, so one has a
faithfully flat map
$A \to A_{\mathrm{perf}}$. 
We form the base-changes
$M'_{\mathrm{perf}} \stackrel{\mathrm{def}}{=} M' \otimes_{\THH(A)}
\THH(A_{\mathrm{perf}}) \in \md_{\THH(A_{\mathrm{perf}})}(\CycSp)$ (which is
a dualizable object) and $M_{\mathrm{perf}} = M \otimes_A
A_{\mathrm{perf}} \in \perf(A_\mathrm{perf})^{BS^1}$. 
We claim that the cyclotomic Frobenius
\[ \varphi\colon  M'_{\mathrm{perf}}[1/\sigma] \to  
(M'_{\mathrm{perf}})^{tC_p} \simeq (M_{\mathrm{perf}})^{tS^1}
\]
is an equivalence. This follows using the same arguments as in \cite[Sec.
4]{AMN}; 
again, one needs to know that both sides are symmetric monoidal functors in
$M'_{\mathrm{perf}}$. 
For this, it suffices to show that $M'_{\mathrm{perf}}$
belongs to the thick subcategory generated by the 
unit in $\mathrm{Mod}_{\THH(A_{\mathrm{perf}})}(\Sp^{BS^1})$. 
We will check this in Proposition~\ref{perfectperf} below. 

Note that  $M'_{\mathrm{perf}}$ is an
$\THH(A_{\mathrm{perf}})$-module, and 
$M'_{\mathrm{perf}}/\sigma \simeq M_{\mathrm{perf}}$.
Under the liftability hypotheses, we conclude  using
Proposition~\ref{freenotperf}
that 
there is an isomorphism of $A_{\mathrm{perf}}[\sigma]$-modules
\[ \pi_*(M'_{\mathrm{perf}}) \simeq \pi_*(M_{\mathrm{perf}})[\sigma]. \]
Combining, we find an isomorphism of $A_{\mathrm{perf}}$-modules
\begin{equation} \label{isomorphism} 
\pi_* \left( M_{\mathrm{perf}} \right)[\sigma^{\pm 1}]^{(1)} \simeq \pi_* \left(
M_{\mathrm{perf}}^{tS^1}
\right).
\end{equation}

In addition, we have the Tate spectral sequence, which shows that 
$\pi_0 (M_{\mathrm{perf}}^{tS^1})$ is a subquotient of 
$\pi_{\mathrm{even}}(M_{\mathrm{perf}})$ and
is a coherent $A_\mathrm{perf}$-module. Since
the differentials are $\mathfrak{m}$-power torsion, it follows that 
the $\mathfrak{m}$-power torsion in 
$\pi_0 (M_{\mathrm{perf}}^{tS^1})$
 is a subquotient of
the $\mathfrak{m}$-power torsion
in 
$\pi_{\mathrm{even}}(M_{\mathrm{perf}})$.

Let $I$ be the annihilator of the $\mathfrak{m}$-power torsion in 
$\pi_{\mathrm{even}}(M_{\mathrm{perf}})$,
which by Proposition~\ref{coherentlemma} is a finitely generated ideal. 
Then combining the above observations and \eqref{isomorphism}, 
we find that $I^{[p]}$ (i.e., the ideal generated by $p$th powers of elements
in $I$) is the annihilator of the $\mathfrak{m}$-power torsion in 
$\pi_0 (M_{\mathrm{perf}}^{tS^1})$. 
Since this is a subquotient of 
$\pi_{\mathrm{even}}(M_{\mathrm{perf}})$, it
follows that $I\subset I^{[p]}$, which is only possible for a finitely
generated proper ideal if $I = (0)$. 
Therefore, 
$\pi_{\mathrm{even}}(M_{\mathrm{perf}})$
(and
similarly for the odd-dimensional Hochschild homology) is torsion-free. 

Finally, it suffices to prove freeness. 
We have proved that $\pi_*(M)$ consists of finitely generated,
torsion-free $A$-modules. 
Let $x \in \mathfrak{m} \setminus \mathfrak{m}^2$, so that $A/x$ is a regular
local ring too. 
It follows that $\pi_*(M/x)$ is $x$-torsion-free and that, by
induction on the Krull dimension,  
$\pi_*(M)/(x)$ is a free $A/(x)$-module. 
This easily implies that 
$\pi_*(M)$ is free as an $A$-module. 
By comparing with the base-change from $A$ to the perfection of its fraction
field, it also follows that $M$ is formal. 

\end{proof} 

In the course of the above argument, we had to check  a perfectness statement. 
In \cite{AMN}, such results are proved when $A_\mathrm{perf}$ is a field, but
they depend on noetherianness hypotheses. One can carefully remove the
noetherianness hypotheses in this case, but for simplicity, we verify this by
using the technique of relative $\THH$ (also discussed in \cite[Sec. 3]{AMN}). 
The starting point is a relative version of B\"okstedt's calculation. 
We denote by $S^0[q_1, \dots, q_n]$ the $\mathbf{E}_\infty$-ring
$\Sigma^\infty_+ ( \mathbb{Z}_{\geq 0}^n)$. 
The idea of considering $\THH$ relative to such $\mathbf{E}_\infty$-rings is
known to experts, and plays an important role in \cite{BMS2}.
\begin{proposition} 
\label{relTHH}
Let $A$ be an $F$-finite regular local ring with system of parameters $t_1,
\dots, t_n$ and perfect residue field $k$. Consider the map 
of $\mathbf{E}_\infty$-rings
\( S^0[q_1, \dots, q_n] \to A , \quad q_i \mapsto t_i.  \)
Then $$\THH(A/S^0[q_1, \dots, q_n])_* \simeq A[\sigma], \quad |\sigma | = 2,$$
where $\sigma$ is the image of the B\"okstedt element under the natural map 
$\THH(\mathbb{F}_p) \to \THH(A/S^0[q_1, \dots, q_n])$. 
\end{proposition}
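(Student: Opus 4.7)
The plan is to factor the $\mathbf{E}_\infty$-ring map $S^0[q_1, \dots, q_n] \to A$ through $\mathbb{F}_p[q_1, \dots, q_n]$ and to reduce the computation to B\"okstedt's Theorem~\ref{bok} by showing that the resulting map $\mathbb{F}_p[q_1, \dots, q_n] \to A$ is derived \'etale, i.e., has vanishing relative cotangent complex.

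First I would compute $L_{A/\mathbb{F}_p[q_1, \dots, q_n]}$. By the discussion preceding Theorem~\ref{relcharp}, $A$ is formally smooth over $\mathbb{F}_p$, so $L_{A/\mathbb{F}_p} \simeq \Omega^1_{A/\mathbb{F}_p}$ is a finitely generated projective $A$-module. Since $k$ is perfect, $\Omega^1_{k/\mathbb{F}_p}=0$; applying $\otimes_A k$ to the Jacobi--Zariski sequence for $\mathbb{F}_p \to A \to k$ and using $L_{k/A} \simeq \mathfrak{m}/\mathfrak{m}^2[1]$ yields $\Omega^1_{A/\mathbb{F}_p} \otimes_A k \simeq \mathfrak{m}/\mathfrak{m}^2$, of $k$-dimension $n$. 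The classes $dt_i$ reduce to a basis of $\mathfrak{m}/\mathfrak{m}^2$, so by Nakayama they form an $A$-basis of $\Omega^1_{A/\mathbb{F}_p} \simeq A^n$. The Jacobi--Zariski sequence for $\mathbb{F}_p \to \mathbb{F}_p[q_i] \to A$ then identifies $L_{\mathbb{F}_p[q_i]/\mathbb{F}_p} \otimes_{\mathbb{F}_p[q_i]} A \to L_{A/\mathbb{F}_p}$ with the $A$-linear isomorphism $A^n \xrightarrow{\sim} \Omega^1_{A/\mathbb{F}_p}$ sending $dq_i \mapsto dt_i$, whence $L_{A/\mathbb{F}_p[q_i]} \simeq 0$.

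Next I would invoke the derived-\'etale base change formula for $\THH$: whenever $R \to R' \to A$ is a composite of $\mathbf{E}_\infty$-ring maps with $L_{A/R'}=0$, one has $\THH(A/R) \simeq \THH(R'/R) \otimes_{R'} A$. Applied with $R = S^0[q_i]$ and $R' = \mathbb{F}_p[q_i]$, this gives
\[ \THH(A/S^0[q_1, \dots, q_n]) \simeq \THH(\mathbb{F}_p[q_i]/S^0[q_i]) \otimes_{\mathbb{F}_p[q_i]} A. \]
Finally, since $\mathbb{F}_p[q_i] \simeq \mathbb{F}_p \otimes_{S^0} S^0[q_i]$ as $\mathbf{E}_\infty$-rings, the standard base change formula for $\THH$ combined with B\"okstedt's theorem produces $\THH(\mathbb{F}_p[q_i]/S^0[q_i]) \simeq \THH(\mathbb{F}_p) \otimes_{S^0} S^0[q_i] \simeq \mathbb{F}_p[q_i][\sigma]$. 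Combining these yields $\pi_*\THH(A/S^0[q_1, \dots, q_n]) \simeq A[\sigma]$ with $|\sigma|=2$.

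The hard part will be justifying the derived-\'etale base change step rigorously. Morally, $L_{A/\mathbb{F}_p[q_i]}=0$ forces $A \otimes^L_{\mathbb{F}_p[q_i]} A \simeq A$, and writing
\[ A \otimes_{S^0[q_i]} A \simeq A \otimes_{\mathbb{F}_p[q_i]} \bigl(\mathbb{F}_p[q_i] \otimes_{S^0[q_i]} \mathbb{F}_p[q_i]\bigr) \otimes_{\mathbb{F}_p[q_i]} A \]
collapses the cyclic bar construction of $A$ over $S^0[q_i]$ to that of $\mathbb{F}_p[q_i]$ over $S^0[q_i]$, tensored with $A$ over $\mathbb{F}_p[q_i]$. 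Translating this simplicial-level collapse into a clean equivalence of $\THH$-spectra is the main technicality, but once in hand the base change and B\"okstedt input combine to give the stated formula.
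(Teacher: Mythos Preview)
Your overall strategy is different from the paper's and would work, but the justification you sketch for the key base-change step contains a genuine error. You claim that $L_{A/\mathbb{F}_p[q_i]}=0$ ``morally forces $A \otimes^L_{\mathbb{F}_p[q_i]} A \simeq A$,'' and then use this to collapse the cyclic bar construction. This is false: vanishing of the cotangent complex means formal \'etaleness, not \'etaleness, and does not force the diagonal to be an equivalence. Already for $A=\mathbb{F}_p[[t]]$ over $R'=\mathbb{F}_p[t]$, the ring $\mathbb{F}_p[[t]] \otimes_{\mathbb{F}_p[t]} \mathbb{F}_p[[t]]$ is enormous (after inverting $t$ it becomes $\mathbb{F}_p((t))\otimes_{\mathbb{F}_p(t)}\mathbb{F}_p((t))$, a tensor product of fields of infinite transcendence degree). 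So the simplicial collapse you describe does not occur.

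Nonetheless the base-change equivalence you want is true, for a different reason. The HKR filtration and $L_{A/\mathbb{F}_p[q_i]}=0$ give $\HH(A/\mathbb{F}_p[q_i])\simeq A$, which says exactly that $\THH(A/S^0[q_i])\otimes_{\THH(\mathbb{F}_p[q_i]/S^0[q_i])}\mathbb{F}_p[q_i]\simeq A$. Hence the comparison map $\THH(\mathbb{F}_p[q_i]/S^0[q_i])\otimes_{\mathbb{F}_p[q_i]}A \to \THH(A/S^0[q_i])$ is an equivalence after reducing modulo $\sigma$; as both sides are connective and $|\sigma|=2$, the cofiber is bounded below with $\sigma$ acting invertibly, hence zero. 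This repairs your argument.

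The paper's proof is shorter and avoids this subtlety: it shows the homotopy groups of $\THH(A/S^0[q_i])$ are finitely generated $A$-modules (via the Quillen spectral sequence and perfectness of $L_{A/\mathbb{Z}[q_i]}$), then observes that base-change along the augmentation $S^0[q_i]\to S^0$ sending $q_i\mapsto 0$ carries $\THH(A/S^0[q_i])$ to $\THH(k)$, and applies Nakayama. Your route gives a more explicit identification (as $\THH(\mathbb{F}_p)\otimes_{\mathbb{F}_p} A$), while the paper's is less sensitive to getting the \'etale-vs-formally-\'etale distinction right.
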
 
\begin{proof} 
Compare also the treatment in \cite[Sec. 3]{AMN}. 
Since $A$ is $F$-finite and regular, the cotangent complex
$L_{A/\mathbb{F}_p}$ is a finitely
generated free module in degree zero. 
By the transitivity sequence, $L_{A/\mathbb{Z}_p[t_1, \dots, t_n]}$ is a perfect $A$-module. 
Thus, by the Quillen spectral sequence, the homotopy groups of
$\HH(A/\mathbb{Z}[q_1, \dots, q_n])$  and thus 
$\THH(A/\mathbb{Z}[q_1, \dots, q_n])$
are
finitely generated $A$-modules.
Compare also \cite{DundasMorrow} for general finite generation results.

Moreover, after base-change $S^0[q_1,
\dots, q_n] \to S^0$ sending $q_i \mapsto 0$, one obtains B\"okstedt's
calculation $\THH(k)_* \simeq k[\sigma]$. Since the homotopy groups of $
\THH(A/S^0[q_1, \dots, q_n])$ are finitely generated $A$-modules, and $A$ is
local, the result follows. 
\end{proof}

Let $A$ be as above. 
Given a 
smooth and proper $A$-linear stable $\infty$-category $\mathcal{C}$, one 
can consider the invariant $\THH( \mathcal{C}/S^0[q_1, \dots, q_n])$, which
naturally takes values in the symmetric monoidal $\infty$-category
$\md_{\THH(A/S^0[q_1, \dots, q_n]}( \Sp^{BS^1})$. 
This produces a one-parameter deformation of Hochschild homology over $A$, and
it is particularly well-behaved (at least for smooth and proper 
$A$-linear stable $\infty$-categories) 
by the following result. 
\begin{proposition} 
Let $A$ be an $F$-finite regular local ring with system of parameters $t_1,
\dots, t_n$ and perfect residue field $k$.
Any dualizable object in $\md_{\THH(A/S^0[q_1, \dots, q_n])}( \Sp^{BS^1})$ is
perfect. 
\end{proposition}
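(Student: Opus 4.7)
The plan is to adapt the argument from \cite[Prop.~4.6]{AMN}, which establishes the analogous statement for $\THH(k)$-modules when $k$ is a perfect field. Set $R = \THH(A/S^0[q_1,\dots,q_n])$, so that $\pi_*R \simeq A[\sigma]$ with $|\sigma|=2$ by Proposition~\ref{relTHH}; the point is that $R$ looks like a polynomial ring over the regular noetherian base $A$ in one degree-two variable, and one can attempt to reduce modulo $\sigma$ to a trivial assertion about discrete rings.

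First I would observe that any dualizable $X \in \md_R(\Sp^{BS^1})$ has perfect underlying $R$-module, since dualizability is preserved by the symmetric monoidal forgetful functor to $\md_R$ and dualizables over an $\mathbf{E}_\infty$-ring are perfect. Next, I would consider the base change $X/\sigma := X \otimes_R R/\sigma$, where $R/\sigma$ is discrete with $\pi_0 = A$, so that $X/\sigma \in \md_A(\Sp^{BS^1})$ is dualizable. I would then prove the auxiliary claim that in $\md_A(\Sp^{BS^1})$ for $A$ any discrete noetherian ring, every dualizable object is perfect: by a Postnikov tower argument one reduces to the case of a discrete $A$-module $M$ with $S^1$-action, and then the automorphism space $\mathrm{Aut}_{\md_A}(M)$ is a discrete group, so any $S^1$-action is classified by an element of $\pi_1(\mathrm{Aut}_{\md_A}(M)) = 0$ and is necessarily trivial. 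Hence $X/\sigma$ lies in the thick subcategory generated by $A$ in $\md_A(\Sp^{BS^1})$, which maps to the thick subcategory generated by $R/\sigma$ in $\md_R(\Sp^{BS^1})$ under restriction along $R \to R/\sigma$; since $R/\sigma$ is perfect in $\md_R(\Sp^{BS^1})$ as the cofiber of multiplication by $\sigma$, we conclude $X/\sigma$ is perfect in $\md_R(\Sp^{BS^1})$.

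Finally I would lift perfectness from $X/\sigma$ to $X$. Using the fiber sequence $\Sigma^2 X \xrightarrow{\sigma} X \to X/\sigma$ in $\md_R(\Sp^{BS^1})$, together with the perfectness of $X$ as underlying $R$-module (where $\pi_*R = A[\sigma]$ is regular noetherian, so $\pi_*X$ has finite projective dimension), one would construct a finite $\sigma$-adic filtration of $X$ whose subquotients are shifts of $X/\sigma$ and hence perfect in $\md_R(\Sp^{BS^1})$, enabling an inductive conclusion. The main obstacle is this lifting step: ensuring the $\sigma$-adic induction genuinely terminates at finite stage rather than producing an infinite tower, and that the extension data remains $S^1$-equivariantly perfect throughout. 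This should follow by exploiting the finite projective dimension of $\pi_*X$ over $A[\sigma]$ together with a d\'evissage in $\md_R(\Sp^{BS^1})$, but is where the careful bookkeeping resides.
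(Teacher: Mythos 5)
Your first two steps are sound: dualizability does force the underlying $R$-module of $X$ to be perfect (the forgetful functor is symmetric monoidal), and your argument that a dualizable object of $\md_A(\Sp^{BS^1})$ for $A$ discrete regular noetherian is perfect is correct --- the Postnikov layers are finitely generated discrete modules, their endomorphism spaces are homotopy discrete, so the $S^1$-action on each layer is classified by a map from the simply connected space $BS^1$ to a $1$-type and is therefore trivial; regularity of $A$ then makes each layer perfect. This gives perfectness of $X/\sigma$ in $\md_R(\Sp^{BS^1})$ after restriction along $R \to R/\sigma \simeq A$.

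The gap is in the final step, and it is not a matter of bookkeeping. A ``finite $\sigma$-adic filtration of $X$ whose subquotients are shifts of $X/\sigma$'' exists if and only if $\sigma$ acts nilpotently on $X$, which fails already for $X = R$ and, more relevantly, for any dualizable $X$ whose homotopy contains a free $A[\sigma]$-summand (the generic case, by Proposition~\ref{noncommCart}). Knowing that $X/\sigma$, and hence every $X/\sigma^n$, is perfect says nothing about $X$ itself without an additional completeness or fracture statement: one must control the $\sigma$-inverted (equivalently, Tate) part of $X$ and the gluing between it and the $\sigma$-complete part, and this is where the $S^1$-equivariance and the regularity of $\pi_*R \simeq A[\sigma]$ enter in an essential, non-formal way. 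The finite projective dimension of $\pi_*X$ over $A[\sigma]$, which you invoke, is automatic from underlying perfectness and does not supply the missing d\'evissage. This completeness argument is exactly the content of \cite[Theorem 2.15]{AMN}, and the paper's proof consists of citing that theorem together with the computation $\pi_*\THH(A/S^0[q_1,\dots,q_n]) \simeq A[\sigma]$ from Proposition~\ref{relTHH}; if you want a self-contained proof you would need to reproduce that argument (e.g., identifying $\md_R(\Sp^{BS^1})$-perfectness via the completed $R^{hS^1}$-module category as in \cite[Sec.~7]{MNN17}) rather than reduce mod $\sigma$.
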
 
\begin{proof} 
This follows by regularity from \cite[Theorem 2.15]{AMN}. \end{proof}

\begin{proposition} 
\label{perfectperf}
Let $A$ be an $F$-finite regular local ring. 
Let $N \in \md_{\THH(A)}( \Sp^{BS^1})$ be dualizable. Then $N \otimes_{\THH(A)}
\THH(A_\mathrm{perf})$ is perfect. 
\end{proposition}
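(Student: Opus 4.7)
The plan is to reduce to the preceding proposition via the relative $\THH$ construction. Let $t_1, \dots, t_n$ be a system of parameters for $A$ and set $R = S^0[q_1, \dots, q_n]$, with the maps $R \to A$ and $R \to A_{\mathrm{perf}}$ both sending $q_i \mapsto t_i$. First I would base change $N$ along $\THH(A) \to \THH(A/R)$ to obtain a dualizable object in $\md_{\THH(A/R)}(\Sp^{BS^1})$, which is perfect by the preceding proposition. A further symmetric monoidal base change along $\THH(A/R) \to \THH(A_{\mathrm{perf}}/R)$ then shows that $N \otimes_{\THH(A)} \THH(A_{\mathrm{perf}}/R)$ is perfect in $\md_{\THH(A_{\mathrm{perf}}/R)}(\Sp^{BS^1})$, since membership in the thick subcategory generated by the unit is preserved under symmetric monoidal functors.

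The key remaining step is to transfer perfectness from the relative setting $\md_{\THH(A_{\mathrm{perf}}/R)}(\Sp^{BS^1})$ back to the absolute setting $\md_{\THH(A_{\mathrm{perf}})}(\Sp^{BS^1})$. I expect the cleanest route is to show that the natural map $\THH(A_{\mathrm{perf}}) \to \THH(A_{\mathrm{perf}}/R)$ is an equivalence of $\mathbf{E}_\infty$-rings with $S^1$-action. To compute the target, I would extend Proposition~\ref{relTHH} to $A_{\mathrm{perf}} = \varinjlim_m A^{1/p^m}$ by taking the filtered colimit of the analogous calculations for each Noetherian $F$-finite piece $A^{1/p^m}$, obtaining $\pi_*\THH(A_{\mathrm{perf}}/R) \simeq A_{\mathrm{perf}}[\sigma]$. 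The source has the same homotopy by a similar filtered colimit extension of B\"okstedt's theorem, and then checking that the natural map carries B\"okstedt generators to the class $\sigma$ would yield the equivalence. Once this is in hand, the symmetric monoidal $\infty$-categories of modules agree, and the desired perfectness follows.

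The main obstacle I anticipate is justifying the comparison in the last step in the non-Noetherian setting of $A_{\mathrm{perf}}$: one cannot directly invoke the framework of \cite{AMN}, and the colimit argument must be carried out carefully to ensure that the generators are correctly tracked. If verifying the equivalence $\THH(A_{\mathrm{perf}}) \simeq \THH(A_{\mathrm{perf}}/R)$ directly proves technically delicate, a fallback strategy is instead to establish the weaker statement that $\THH(A_{\mathrm{perf}}/R)$ is itself perfect as a $\THH(A_{\mathrm{perf}})$-module in $\Sp^{BS^1}$; this would already suffice to transfer perfectness via restriction of scalars along the $\mathbf{E}_\infty$-ring map $\THH(A_{\mathrm{perf}}) \to \THH(A_{\mathrm{perf}}/R)$.
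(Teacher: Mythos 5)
Your overall strategy is the paper's: base change to relative $\THH$ over $S^0[q_1,\dots,q_n]$, use the fact that dualizable objects are perfect there, and then compare with absolute $\THH$ of the perfection. But the comparison step contains a genuine error. The map $\THH(A_{\mathrm{perf}}) \to \THH(A_{\mathrm{perf}}/R)$ with $R = S^0[q_1,\dots,q_n]$ is \emph{not} an equivalence, and $\pi_*\THH(A_{\mathrm{perf}}/R)$ is not $A_{\mathrm{perf}}[\sigma]$. Your filtered-colimit argument breaks down because Proposition~\ref{relTHH} applies to $A^{1/p^m}$ only with respect to a system of parameters generating its maximal ideal, namely $t_1^{1/p^m},\dots,t_n^{1/p^m}$; the elements $t_1,\dots,t_n$ generate only a proper $\mathfrak{m}$-primary ideal of $A^{1/p^m}$, so the base change along $q_i\mapsto 0$ in the proof of that proposition lands on $\THH$ of the nonreduced Artinian ring $A^{1/p^m}/(t_1,\dots,t_n)$ rather than on $\THH(k)$, and the B\"okstedt--Nakayama step fails. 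One sees the failure directly on cotangent complexes: since $L_{A_{\mathrm{perf}}/\mathbb{F}_p}=0$, the transitivity sequence gives $L_{A_{\mathrm{perf}}/\mathbb{F}_p[t_1,\dots,t_n]}\simeq A_{\mathrm{perf}}^{\oplus n}[1]$, which is nonzero, so $\HH(A_{\mathrm{perf}}/\mathbb{Z}[q_1,\dots,q_n])$ — and hence $\THH(A_{\mathrm{perf}}/R)$ — carries extra classes beyond $A_{\mathrm{perf}}[\sigma]$.

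The fix is to perfect the base along with $A$: the correct factorization is
\[ \THH(A) \to \THH(A/S^0[q_1,\dots,q_n]) \to \THH\bigl(A_{\mathrm{perf}}/S^0[q_1^{1/p^\infty},\dots,q_n^{1/p^\infty}]\bigr), \]
and it is this last ring that is equivalent to $\THH(A_{\mathrm{perf}})$, because $S^0[q_1^{1/p^\infty},\dots,q_n^{1/p^\infty}]$ is ($p$-adically) equivalent to its own $\THH$, its $\pi_0$ being perfect mod $p$. With that replacement your two symmetric monoidal base changes go through verbatim and reproduce the paper's proof. Your fallback does not rescue the original version either: restriction of scalars along $\THH(A_{\mathrm{perf}}) \to \THH(A_{\mathrm{perf}}/R)$ would at best show that $N\otimes_{\THH(A)}\THH(A_{\mathrm{perf}}/R)$ is perfect over $\THH(A_{\mathrm{perf}})$, which is a different object from the $N\otimes_{\THH(A)}\THH(A_{\mathrm{perf}})$ appearing in the statement.
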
 
\begin{proof} 
In fact, we have a factorization
of $\mathbf{E}_\infty$-rings with $S^1$-action
\[ \THH(A) \to \THH( A/S^0[q_1, \dots, q_n]) \to \THH(
A_{\mathrm{perf}}/S^0[q_1^{1/p^\infty}, \dots, q_n^{1/p^\infty}]) \simeq
\THH(A_{\mathrm{perf}}).  \]
Here we use the general observation that for an $\einf$-$S^0[q_1^{1/p^\infty}, \dots,
q_n^{1/p^\infty}]$-algebra $R$, 
the relative Hochschild homology 
$\THH( R/ S^0[q_1^{1/p^\infty}, \dots, q_n^{1/p^\infty}])$ and the absolute
$\THH(R)$ agree after $p$-adic completion; this follows easily from 
$\THH(  S^0[q_1^{1/p^\infty}, \dots, q_n^{1/p^\infty}])/p \simeq
S^0[q_1^{1/p^\infty}, \dots, q_n^{1/p^\infty}]/p$ by perfectness (cf. \cite[Prop. 11.7]{BMS2}). 

We have just seen that $N \otimes_{\THH(A)} \THH( A/S^0[q_1, \dots, q_n])$ is
perfect in 
$\md_{\THH(A/S^0[q_1, \dots, q_n])}( \Sp^{BS^1})$; base-changing up to
$\THH(A_{\mathrm{perf}})$, the result follows. 
\end{proof}

Once more, we make the statement for Hochschild homology of categories, or
more generally for noncommutative motives. 
Let $A$ be an $F$-finite regular noetherian ring with lift $\widetilde{A}$ to
$\mathbb{Z}/p^2$. 
We use, again, the $\infty$-category $\nmot_A$, its subcategory
$\nmot_A^\omega$ generated by the motives of smooth and proper $A$-linear
$\infty$-categories, and the Hochschild homology
functor $\HH(\cdot/A): \nmot_A \to \md_A^{BS^1}$. 

\begin{corollary} 
Let $X \in \nmot_A^\omega$. 
Suppose that $X$ lifts to an object of $\nmot_{\widetilde{A}}^\omega$ and that
$\HH_i(X/A)$ 
vanishes for  
$i \notin [a,b]$ for $b-a \leq 2p-2$. 
Then the Hochschild homology groups $\HH_i(X/A)$ are finitely
generated projective $A$-modules and the $S^1$-action is formal. \end{corollary}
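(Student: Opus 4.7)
The plan is to simply apply the relative characteristic $p$ formality criterion (the theorem just proved) with the dualizable object $M'$ taken to be the $\THH$-lift of $X$. So the first task is to produce, for any $X \in \nmot_A^\omega$, a dualizable object $\THH(X) \in \md_{\THH(A)}(\CycSp)$ whose underlying $\perf(A)^{BS^1}$-object after base change to $A$ recovers $\HH(X/A)$. For this I would invoke the fact, used already in the proof of the analogous corollary in characteristic zero, that $\THH$ extends to an exact, lax symmetric monoidal functor
\[ \THH \colon \nmot_A \longrightarrow \md_{\THH(A)}(\CycSp), \]
cf.\ \cite{BMloc, BGT, ayala-mg-rozenblyum}. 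Smooth and proper $A$-linear $\infty$-categories are sent to dualizable objects, and since the dualizable objects form a thick subcategory, every $X \in \nmot_A^\omega$ gives a dualizable $\THH(X)$. Base change to $A$ recovers $\HH(X/A)$, again because $\THH$ is symmetric monoidal on the dualizable locus.

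Next I would verify the lifting condition. By assumption, there is $\widetilde{X} \in \nmot_{\widetilde{A}}^\omega$ with $\widetilde{X} \otimes_{\widetilde{A}} A \simeq X$. Applying the analogous functor
\[ \THH \colon \nmot_{\widetilde{A}} \longrightarrow \md_{\THH(\widetilde{A})}(\CycSp) \]
yields a dualizable object $\THH(\widetilde{X})$, which in particular is a perfect $\THH(\widetilde{A})$-module. Base change along $\THH(\widetilde{A}) \to \THH(A)$ gives back $\THH(X)$ as a perfect $\THH(A)$-module, so the underlying $\THH(A)$-module of $\THH(X)$ lifts to $\THH(\widetilde{A})$, as required by the third hypothesis of the theorem.

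The amplitude hypothesis is immediate from the assumption on $\HH_i(X/A)$. Applying the relative characteristic $p$ formality criterion with $M = \HH(X/A)$ and $M' = \THH(X)$ yields that $\HH(X/A)$ is a finitely generated projective $A$-module and the $S^1$-action is formal, as desired.

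The only nontrivial step is really the first one, namely the existence of a well-behaved extension of $\THH$ as a functor valued in cyclotomic $\THH(A)$-modules from smooth and proper categories to the entire thick subcategory $\nmot_A^\omega$, preserving the dualizable property. This is a standard consequence of the fact that $\THH$ is an additive (in fact localizing, by \cite{BMloc}) invariant with values in $\CycSp$; the same point was invoked in the characteristic zero corollary, and no new input is needed in the $F$-finite characteristic $p$ setting.
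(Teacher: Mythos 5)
Your proposal is correct and is exactly the argument the paper intends: the corollary is stated without proof as an immediate consequence of the relative characteristic $p$ formality criterion, obtained by taking $M' = \THH(X)$ via the extension of $\THH$ to a $\CycSp$-valued localizing invariant (so that dualizability passes from smooth proper generators to the thick subcategory $\nmot_A^\omega$), with the lift $\THH(\widetilde{X})$ supplying hypothesis (3), just as in the characteristic zero corollary. No gaps.
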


\bibliographystyle{halpha}
\bibliography{kaledin}

\newcommand{\etalchar}[1]{$^{#1}$}
\begin{thebibliography}{AMGR17}

\bibitem[ABG{\etalchar{+}}18]{ABGHLM}
Vigleik Angeltveit, Andrew~J. Blumberg, Teena Gerhardt, Michael~A. Hill, Tyler
  Lawson, and Michael~A. Mandell.
\newblock Topological cyclic homology via the norm.
\newblock {\em Doc. Math.}, 23:2101--2163, 2018.

\bibitem[AG14]{AG}
Benjamin Antieau and David Gepner.
\newblock Brauer groups and \'etale cohomology in derived algebraic geometry.
\newblock {\em Geom. Topol.}, 18(2):1149--1244, 2014.

\bibitem[AMGR17]{ayala-mg-rozenblyum}
David Ayala, Aaron Mazel-Gee, and Nick Rozenblyum.
\newblock The geometry of the cyclotomic trace.
\newblock {\em ar{X}iv eprints}, 2017, \url{https://arxiv.org/abs/1710.06409}.

\bibitem[AMN18]{AMN}
Benjamin Antieau, Akhil Mathew, and Thomas Nikolaus.
\newblock On the {B}lumberg-{M}andell {K}\"{u}nneth theorem for {TP}.
\newblock {\em Selecta Math. (N.S.)}, 24(5):4555--4576, 2018.

\bibitem[Ant18]{antieauperiodic}
Benjamin Antieau.
\newblock Periodic cyclic homology and derived de {R}ham cohomology.
\newblock {\em arXiv preprint arXiv:1808.05246}, 2018.

\bibitem[AV17]{AV}
Benjamin Antieau and Gabriele Vezzosi.
\newblock A remark on the {Hochschild-Kostant-Rosenberg} theorem in
  characteristic p.
\newblock {\em ArXiv e-prints}, 2017, \url{http://arxiv.org/abs/1710.06039}.

\bibitem[BGT13]{BGT}
Andrew~J. Blumberg, David Gepner, and Gon\c{c}alo Tabuada.
\newblock A universal characterization of higher algebraic {$K$}-theory.
\newblock {\em Geom. Topol.}, 17(2):733--838, 2013.

\bibitem[BGT14]{BGTmult}
Andrew~J. Blumberg, David Gepner, and Gon\c{c}alo Tabuada.
\newblock Uniqueness of the multiplicative cyclotomic trace.
\newblock {\em Adv. Math.}, 260:191--232, 2014.

\bibitem[BM12a]{BM12}
Andrew~J. Blumberg and Michael~A. Mandell.
\newblock Localization theorems in topological {H}ochschild homology and
  topological cyclic homology.
\newblock {\em Geom. Topol.}, 16(2):1053--1120, 2012.

\bibitem[BM12b]{BMloc}
Andrew~J. Blumberg and Michael~A. Mandell.
\newblock Localization theorems in topological {H}ochschild homology and
  topological cyclic homology.
\newblock {\em Geom. Topol.}, 16(2):1053--1120, 2012.

\bibitem[BM15]{BMcyc}
Andrew~J. Blumberg and Michael~A. Mandell.
\newblock The homotopy theory of cyclotomic spectra.
\newblock {\em Geom. Topol.}, 19(6):3105--3147, 2015.

\bibitem[BM17]{blumberg-mandell-tp}
Andrew Blumberg and Michael Mandell.
\newblock The strong {K}\"unneth theorem for topological periodic cyclic
  homology.
\newblock {\em ArXiv e-prints}, 2017, \url{http://arxiv.org/abs/1706.06846}.

\bibitem[BMS19]{BMS2}
Bhargav Bhatt, Matthew Morrow, and Peter Scholze.
\newblock Topological {H}ochschild homology and integral {$p$}-adic {H}odge
  theory.
\newblock {\em Publ. Math. Inst. Hautes \'{E}tudes Sci.}, 129:199--310, 2019.

\bibitem[Bru00]{Brun}
M.~Brun.
\newblock Topological {H}ochschild homology of {${\bf Z}/p^n$}.
\newblock {\em J. Pure Appl. Algebra}, 148(1):29--76, 2000.

\bibitem[Coh16]{cohn}
Lee Cohn.
\newblock Differential graded categories are $k$-linear stable
  $\infty$-categories.
\newblock {\em ArXiv e-prints}, 2016, \url{https://arxiv.org/abs/1308.2587}.

\bibitem[Del68]{Delignedeg}
P.~Deligne.
\newblock Th\'eor\`eme de {L}efschetz et crit\`eres de d\'eg\'en\'erescence de
  suites spectrales.
\newblock {\em Inst. Hautes \'Etudes Sci. Publ. Math.}, (35):259--278, 1968.

\bibitem[DI87]{DI}
Pierre Deligne and Luc Illusie.
\newblock Rel\`evements modulo {$p^2$} et d\'ecomposition du complexe de de
  {R}ham.
\newblock {\em Invent. Math.}, 89(2):247--270, 1987.

\bibitem[DM17]{DundasMorrow}
Bj\o rn~Ian Dundas and Matthew Morrow.
\newblock Finite generation and continuity of topological {H}ochschild and
  cyclic homology.
\newblock {\em Ann. Sci. \'Ec. Norm. Sup\'er. (4)}, 50(1):201--238, 2017.

\bibitem[EK04]{EK}
Matthew Emerton and Mark Kisin.
\newblock An introduction to the {R}iemann-{H}ilbert correspondence for unit
  {$F$}-crystals.
\newblock In {\em Geometric aspects of {D}work theory. {V}ol. {I}, {II}}, pages
  677--700. Walter de Gruyter, Berlin, 2004.

\bibitem[Get93]{getzler}
Ezra Getzler.
\newblock Cartan homotopy formulas and the {G}auss-{M}anin connection in cyclic
  homology.
\newblock In {\em Quantum deformations of algebras and their representations
  ({R}amat-{G}an, 1991/1992; {R}ehovot, 1991/1992)}, volume~7 of {\em Israel
  Math. Conf. Proc.}, pages 65--78. Bar-Ilan Univ., Ramat Gan, 1993.

\bibitem[Goo85]{Good}
Thomas~G. Goodwillie.
\newblock Cyclic homology, derivations, and the free loopspace.
\newblock {\em Topology}, 24(2):187--215, 1985.

\bibitem[Goo86]{Goodrel}
Thomas~G. Goodwillie.
\newblock Relative algebraic {$K$}-theory and cyclic homology.
\newblock {\em Ann. of Math. (2)}, 124(2):347--402, 1986.

\bibitem[Hes96]{hesselholt}
Lars Hesselholt.
\newblock On the {$p$}-typical curves in {Q}uillen's {$K$}-theory.
\newblock {\em Acta Math.}, 177(1):1--53, 1996.

\bibitem[Hes18]{hesselholt-tp}
Lars Hesselholt.
\newblock Topological {H}ochschild homology and the {H}asse-{W}eil zeta
  function.
\newblock In {\em An alpine bouquet of algebraic topology}, volume 708 of {\em
  Contemp. Math.}, pages 157--180. Amer. Math. Soc., Providence, RI, 2018.

\bibitem[HM97]{HM97}
Lars Hesselholt and Ib~Madsen.
\newblock On the {$K$}-theory of finite algebras over {W}itt vectors of perfect
  fields.
\newblock {\em Topology}, 36(1):29--101, 1997.

\bibitem[Hoy15]{hoyoisfixed}
Marc Hoyois.
\newblock The fixed points of the circle action on {H}ochschild homology.
\newblock {\em ar{X}iv eprints}, 2015, \url{https://arxiv.org/abs/1506.07123}.

\bibitem[HSS00]{HSS00}
Mark Hovey, Brooke Shipley, and Jeff Smith.
\newblock Symmetric spectra.
\newblock {\em J. Amer. Math. Soc.}, 13(1):149--208, 2000.

\bibitem[HSS17]{HSS}
Marc Hoyois, Sarah Scherotzke, and Nicol\`o Sibilla.
\newblock Higher traces, noncommutative motives, and the categorified {C}hern
  character.
\newblock {\em Adv. Math.}, 309:97--154, 2017.

\bibitem[Ill71]{Illusiecotangent}
Luc Illusie.
\newblock {\em Complexe cotangent et d\'eformations. {I}}.
\newblock Lecture Notes in Mathematics, Vol. 239. Springer-Verlag, Berlin-New
  York, 1971.

\bibitem[Ill90]{Illusie}
Luc Illusie.
\newblock R\'eduction semi-stable et d\'ecomposition de complexes de de {R}ham
  \`a\ coefficients.
\newblock {\em Duke Math. J.}, 60(1):139--185, 1990.

\bibitem[Kal08]{K1}
D.~Kaledin.
\newblock Non-commutative {H}odge-to-de {R}ham degeneration via the method of
  {D}eligne-{I}llusie.
\newblock {\em Pure Appl. Math. Q.}, 4(3, Special Issue: In honor of Fedor
  Bogomolov. Part 2):785--875, 2008.

\bibitem[Kal17]{K2}
D.~Kaledin.
\newblock Spectral sequences for cyclic homology.
\newblock In {\em Algebra, geometry, and physics in the 21st century}, volume
  324 of {\em Progr. Math.}, pages 99--129. Birkh\"{a}user/Springer, Cham,
  2017.

\bibitem[KS09]{KS}
M.~Kontsevich and Y.~Soibelman.
\newblock Notes on {$A_\infty$}-algebras, {$A_\infty$}-categories and
  non-commutative geometry.
\newblock In {\em Homological mirror symmetry}, volume 757 of {\em Lecture
  Notes in Phys.}, pages 153--219. Springer, Berlin, 2009.

\bibitem[Lur14]{HA}
Jacob Lurie.
\newblock {\em Higher Algebra}.
\newblock Available at
  \url{http://www.math.harvard.edu/~lurie/papers/HigherAlgebra.pdf}, 2014.

\bibitem[Lur17]{SAG}
Jacob Lurie.
\newblock {\em Spectral algebraic geometry}.
\newblock 2017, http://www.math.harvard.edu/~lurie/.

\bibitem[Mat17]{residuefields}
Akhil Mathew.
\newblock Residue fields for a class of rational {$\bold{E}_\infty$}-rings and
  applications.
\newblock {\em J. Pure Appl. Algebra}, 221(3):707--748, 2017.

\bibitem[MM02]{MM02}
M.~A. Mandell and J.~P. May.
\newblock Equivariant orthogonal spectra and {$S$}-modules.
\newblock {\em Mem. Amer. Math. Soc.}, 159(755):x+108, 2002.

\bibitem[MNN17]{MNN17}
Akhil Mathew, Niko Naumann, and Justin Noel.
\newblock Nilpotence and descent in equivariant stable homotopy theory.
\newblock {\em Adv. Math.}, 305:994--1084, 2017.

\bibitem[MSV97]{MSV}
J.~McClure, R.~Schw\"anzl, and R.~Vogt.
\newblock {$THH(R)\cong R\otimes S^1$} for {$E_\infty$} ring spectra.
\newblock {\em J. Pure Appl. Algebra}, 121(2):137--159, 1997.

\bibitem[NS18]{nikolaus-scholze}
Thomas Nikolaus and Peter Scholze.
\newblock On topological cyclic homology.
\newblock {\em Acta Math.}, 221(2):203--409, 2018.

\bibitem[PVV18]{PVV}
Alexander Petrov, Dmitry Vaintrob, and Vadim Vologodsky.
\newblock The {G}auss-{M}anin connection on the periodic cyclic homology.
\newblock {\em Selecta Math. (N.S.)}, 24(1):531--561, 2018.

\bibitem[Shk07]{SHK}
Dmytro Shklyarov.
\newblock On {S}erre duality for compact homologically smooth {DG} algebras.
\newblock {\em ar{X}iv eprints}, 2007,
  \url{https://arxiv.org/abs/math/0702590}.

\bibitem[{Sta}17]{stacks-project}
The {Stacks Project Authors}.
\newblock {The Stacks Project}.
\newblock \url{http://stacks.math.columbia.edu}, 2017.

\bibitem[Tab09]{Tab09}
Gon\c{c}alo Tabuada.
\newblock Homotopy theory of spectral categories.
\newblock {\em Adv. Math.}, 221(4):1122--1143, 2009.

\bibitem[Tab15]{tab}
Gon\c{c}alo Tabuada.
\newblock {\em Noncommutative motives}, volume~63 of {\em University Lecture
  Series}.
\newblock American Mathematical Society, Providence, RI, 2015.
\newblock With a preface by Yuri I. Manin.

\bibitem[Toe08]{toen}
Bertrand Toen.
\newblock Anneaux de d\'efinition des dg-alg\`ebres propres et lisses.
\newblock {\em Bull. Lond. Math. Soc.}, 40(4):642--650, 2008.

\end{thebibliography}

\end{document}